\titlespacing{\paragraph}{0em}{0em}{0.5em}
\titlespacing{\subparagraph}{0em}{0em}{0.5em}
\setlist[enumerate]{noitemsep, partopsep=0pt, topsep=0pt, parsep=0pt, itemsep=0pt}
\setlist[itemize]{noitemsep, partopsep=0pt, topsep=0pt, parsep=0pt, itemsep=0pt}
\theoremstyle{plain}
\newtheorem{theorem}{Theorem}
\newtheorem*{theorem*}{Theorem}
\newtheorem{proposition}[theorem]{Proposition}
\newtheorem{corollary}[theorem]{Corollary}
\newtheorem{lemma}[theorem]{Lemma}
\newtheorem*{lemma*}{Lemma}
\theoremstyle{definition}
\newtheorem{definition}[theorem]{Definition}
\theoremstyle{remark}
\newtheorem{remark}[theorem]{Remark}
\newtheorem*{remark*}{Remark}			% Settings for theorem environments
\newcommand*\diff{\mathop{}\!\mathrm{d}}
\newcommand{\R}{\mathbb{R}}
\newcommand{\J}{\mathrm{J}}
\newcommand{\D}{\mathcal{D}}
\newcommand{\me}{\mathrm{e}}
\renewcommand{\L}{\mathrm{L}}
\renewcommand{\exp}{\mathrm{exp}}
\renewcommand{\epsilon}{\varepsilon}
\renewcommand{\phi}{\varphi}
\DeclareMathOperator{\Lie}{\mathrm{Lie}}
\DeclareMathOperator{\T}{\mathrm{T}}
\DeclareMathOperator{\Ima}{\mathrm{Im}}
\date{\today}
\title{\textbf{\uppercase{\large{Local non-injectivity of the exponential map at
critical points in sub-Riemannian geometry}}}}
\author{Samuël Borza\thanks{\href{mailto:sborza@sissa.it}{sborza@sissa.it}} }
\affil[]{International School for Advanced Studies (SISSA, Trieste)}
\author{Wilhelm Klingenberg\thanks{\href{mailto: wilhelm.klingenberg@dur.ac.uk}{wilhelm.klingenberg@dur.ac.uk }} }
\affil[]{Department of Mathematics, Durham University}
\begin{document}

\maketitle

\providecommand{\keywords}[1]
{
	\textbf{\textit{Keywords---}} #1
}

\providecommand{\msc}[1]
{
	\textbf{\textit{MSC (2020)---}} #1
}

\begin{abstract} \noindent
	 We prove that the sub-Riemannian exponential map is not injective in any neighbourhood of certain critical points. Namely that it does not behave like the injective map of reals given by $f(x) = x^3$ near its critical point $x = 0$. As a consequence, we characterise conjugate points in ideal sub-Riemannian manifolds in terms of the metric structure of the space. The proof uses the Hilbert invariant integral of the associated variational problem.
\end{abstract}
\keywords{Sub-Riemannian geometry, Conjugate points, Metric geometry}\\
\msc{53C17, 58E10, 53C22}				% Abstract

\tableofcontents

\section{Introduction}

In their 1932 landmark paper, Morse and Littauer \cite{morselittauer1932} showed that the exponential map of an analytic Finsler manifold is never injective on any neighbourhood of a conjugate vector. In other words, conjugate points occur precisely when certain families  of extremals (called extremal fields) fail to cover the neighbourhood of these points in a one-to-one manner. Savage \cite{savage1943}  extended this result to the smooth case, while Warner \cite{warner1965} supplied a different proof of the same result by obtaining normal forms of the exponential map near those conjugate vectors at which Whitney's singularity theory can be applied.

Our results in the area of sub-Riemannian geometry also require fine properties of regularity and continuity of the exponential map as established in a previous work of the present authors \cite{borklin}. This allows us to define regular/singular conjugate points. As the sub-Riemannian exponential map corresponds to the projection of solutions of a Hamiltonian system on the cotangent bundle, geodesic variations may vanish of infinite order in this regime, at least in the non-analytic case. This is in marked contrast to Riemannian geometry where the Jacobi differential equation has order two. We therefore introduce the notion of order of a conjugate points and make a distinction between those of infinite and finite order. Furthermore, the proofs of Warner, Savage, and Morse and Littauer all depend on some regularity of the conjugate locus, which remains open in sub-Riemannian geometry and that we also address. Our first main result is the following.

\begin{theorem}
\label{main1}
 Let $M$ be a sub-Riemannian manifold and $p \in M$. If $\lambda_0 \in \mathrm{Dom}(\mathrm{exp}_p) \subseteq \mathrm{T}^*_p(M)$ is a strongly normal and regular conjugate covector of finite order, then the exponential map $\mathrm{exp}_p$ is not injective in any neighbourhood of $\lambda_0$.
\end{theorem}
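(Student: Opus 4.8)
The plan is to argue by contradiction. Suppose $\exp_p$ is injective on some open neighbourhood $U$ of $\lambda_0$ in $\mathrm{Dom}(\exp_p)$, and write $q_0 = \exp_p(\lambda_0)$ and $\gamma_0(t) := \exp_p(t\lambda_0)$. Since $\exp_p$ is continuous and $\dim \mathrm{T}^*_p M = \dim M$, invariance of domain makes $\exp_p \colon U \to V := \exp_p(U)$ a homeomorphism onto an open neighbourhood $V$ of $q_0$. Its continuous inverse endows $V$ with a central field of normal extremals: through each $q \in V$ runs exactly one geodesic issuing from $p$, namely $s \mapsto \exp_p\!\big(s\,\exp_p^{-1}(q)\big)$ for $s \in [0,1]$. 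The decisive consequence is that, for small $\epsilon>0$, the covector $(1+\epsilon)\lambda_0$ lies in $U$ and injectivity forces $\exp_p^{-1}\!\big(\gamma_0(1+\epsilon)\big) = (1+\epsilon)\lambda_0$; hence the field geodesic reaching the point $\gamma_0(1+\epsilon)$, which lies just beyond the conjugate point $q_0$, is $\gamma_0$ itself, prolonged across its conjugate point.

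Next I would attach to this field its value function $S(q) = \tfrac12\int_0^1 |\dot\gamma_q|^2\,\diff s$, the energy of the field geodesic $\gamma_q$ from $p$ to $q$, and invoke the Hilbert invariant integral of the energy functional. Along a central field $\mathrm{d}S$ agrees with the calibrating one-form assembled from the terminal covectors of the field, so the Hilbert integral $\mathcal{I}(\gamma) = \int_\gamma \mathrm{d}S = S(\gamma(1)) - S(\gamma(0))$ is path-independent, and for any admissible competitor $\gamma$ from $p$ to $q \in V$ one obtains the fundamental identity $J(\gamma) = S(q) + \int_0^1 \mathcal{E}\,\diff s$, where $J$ is the energy and $\mathcal{E}$ the Weierstrass excess. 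Strong normality guarantees that the sub-Riemannian Hamiltonian is smooth and strictly fibrewise convex along $\gamma_0$, so that $\mathcal{E}>0$ off the field direction; consequently every field geodesic, and in particular the prolonged $\gamma_0$ reaching $\gamma_0(1+\epsilon)$, is the \emph{strict} energy-minimiser to its endpoint among curves contained in $V$.

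The contradiction then issues from the conjugacy itself: a strongly normal geodesic cannot minimise past a conjugate point of finite order, for the vanishing Jacobi field generated by a nonzero $v \in \Kern \mathrm{d}_{\lambda_0}\exp_p$ makes the index form (the second variation of the energy) acquire a strictly negative direction on $[0,1+\epsilon]$, yielding a curve from $p$ to $\gamma_0(1+\epsilon)$ of strictly smaller energy than $\gamma_0$ — in flat contradiction with the strict minimality just established. I expect the main obstacle to reside in two places. First, the calibration must be carried across the singular leaf through $q_0$, where $\exp_p^{-1}$ is only continuous and $S$ a priori fails to be $C^1$; I would supply the regularity of $S$ needed to run the Hilbert-integral comparison from the fine continuity of $\exp_p$ established in \cite{borklin}, together with the hypothesis that $\lambda_0$ is a \emph{regular} conjugate covector, which controls the local geometry of the conjugate locus at $q_0$. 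Second, and more essentially, one must convert the \emph{finite} order of conjugacy into a genuinely negative direction of the index form — the step that has no analogue for an infinite-order conjugate covector, and precisely the mechanism by which the theorem rules out the injective behaviour of $x \mapsto x^3$. I would treat this by expanding the energy along the family $\exp_p(\lambda_0 + s v + o(s))$ and extracting its leading nonvanishing term, whose sign is dictated by the finite order.
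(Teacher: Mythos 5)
Your overall architecture is the same as the paper's: argue by contradiction, use injectivity (hence, by invariance of domain, a homeomorphism) to turn the geodesics from $p$ into a central field covering a neighbourhood of $q_0$, calibrate with a Hilbert invariant integral to conclude that the field geodesics --- in particular $\gamma_0$ prolonged past its conjugate point --- are minimisers, and contradict non-minimality past a conjugate point (\cref{conjptnotmin}). But there is a genuine gap, and it sits exactly where you defer to ``supplying the regularity of $S$''. The calibrating one-form is built from the terminal covectors of the field through $\exp_p^{-1}$, which is only \emph{continuous}; hence the one-form is merely continuous, $S$ has no a priori differentiability, and the identity $\int_\gamma \mathrm{d}S = S(\gamma(1)) - S(\gamma(0))$ cannot even be written, let alone used for a comparison argument. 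No ``fine continuity'' of $\exp_p$ repairs this by itself: the paper's entire technical core is devoted to this point. It first proves that near a regular conjugate covector of \emph{finite order} the conjugate locus is a codimension-one submanifold (\cref{locusmanifold}, resting on the constancy-of-order result \cref{constantorder}, which uses Malgrange's preparation theorem), and then establishes path-independence of the manifold-side Hilbert integral by approximating loops by polygons that cross this hypersurface only finitely many times, via a Sard-type lemma (\cref{savagelemma}, used in \cref{Ipathind}). Your proposal contains no substitute for this mechanism, so the decisive calibration step remains unproved.

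Relatedly, you misplace the finite-order hypothesis. You spend it on your ``second, and more essentially'' obstacle --- extracting a negative direction of the index form beyond the conjugate point --- but that step requires no order assumption whatsoever: non-minimality past a conjugate point holds for any normal extremal without abnormal segments, of arbitrary order (\cref{conjptnotmin}, i.e. \cite[Theorem 8.61]{agrachev2020}), which is precisely how the paper closes its argument in \cref{noninjthm}. The finite order is needed for the \emph{first} obstacle, the one you propose to dispatch with regularity of the conjugate covector alone. As written, your plan therefore deprives the key step of its key hypothesis and spends that hypothesis where it is not needed. A minor further point: your claim that the excess $\mathcal{E}$ is strictly positive ``off the field direction'' by strict fibrewise convexity of the Hamiltonian is doubtful in proper sub-Riemannian geometry (the fibrewise quadratic form is degenerate); the paper avoids this by using only the non-strict Fenchel--Young-type inequality from the maximum principle in \cref{Iindmin}, which suffices since plain (not strict) minimality already contradicts \cref{conjptnotmin}.
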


The content of \cref{main1} is a \textit{local} extension of the following \textit{global} known fact in (sub-)Riemannian geometry: if the structure does not admit abnormal extremals, and if $\gamma$ is a length-minimizing geodesic
with conjugate endpoints $p = \gamma(0)$ and $q = \gamma(1)$, then on any neighborhood $V \subseteq M$ of $q$ the exponential map $\exp_p|_{\exp_p^{-1}(V)} : \exp_p^{-1}(V) \to M$ is not injective (see \cite[Theorem 8.73. and Corollary 8.74.]{agrachev2020}). Thus such a result is not local, but rather global, and related with minimality: the pairs of geodesics joining
$p$ with some of the points $q \in V$ can a priori be far away at intermediate times. We do not think that the proof of the local result can be reduced to the global one, except in very specific cases like in \cref{cutlocus}, we prove the following from simple arguments (see \cref{morse-littauer-cut}). 

\begin{theorem}
    Let $M$ be an ideal sub-Riemannian manifold, and $p \in M$. If $\lambda_0 \in \mathrm{Cut}(p) \setminus \mathrm{Cut}^1(p)$, then the exponential map $\exp_p$ fails to be injective in any neighbourhood of $\lambda_0 \in \mathrm{T}^*_p(M).$
\end{theorem}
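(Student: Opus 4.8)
The plan is to produce, inside every neighbourhood of $\lambda_0$, two distinct covectors with the same $\exp_p$-image, by running the minimising geodesic a little past its cut point and using the ideal hypothesis to recover a competing minimiser that uniqueness then drags back to $\lambda_0$. Write $\gamma(t)=\exp_p(t\lambda_0)$ and $q=\exp_p(\lambda_0)=\gamma(1)$. The condition $\lambda_0\in\mathrm{Cut}(p)$ means that $t=1$ is the cut time of $\gamma$, so $\gamma|_{[0,1]}$ is minimising while $\gamma|_{[0,1+s]}$ is not for any $s>0$; in particular $\gamma|_{[0,1]}$ realises the distance $d(p,q)$. The condition $\lambda_0\notin\mathrm{Cut}^1(p)$ means that this minimiser is the \emph{only} one joining $p$ to $q$. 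Uniqueness is the ingredient that localises everything at $\lambda_0$.

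First I would step past the cut point. Completeness puts $(1+s)\lambda_0$ in $\mathrm{Dom}(\exp_p)$ for small $s>0$, and I set $q_s:=\exp_p((1+s)\lambda_0)=\gamma(1+s)$. Because $t=1$ is the cut time, $\gamma|_{[0,1+s]}$ is no longer minimising, so its length strictly exceeds $d(p,q_s)$. As $M$ is ideal it is complete and free of nontrivial abnormal minimisers; by Hopf--Rinow there is a length-minimiser from $p$ to $q_s$, and it is normal, so there exists a covector $\tilde\lambda_s\in\T^*_pM$ with $\exp_p(\tilde\lambda_s)=q_s$ whose geodesic has length $d(p,q_s)$. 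Since one geodesic to $q_s$ (generated by $\tilde\lambda_s$) is minimising while the other (generated by $(1+s)\lambda_0$) is not, the two covectors are distinct, and $q_s$ has at least two preimages.

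Next I would check that both preimages converge to $\lambda_0$ as $s\to0^+$. For $(1+s)\lambda_0$ this is immediate. For $\tilde\lambda_s$ I would use that each is a minimising covector with $\exp_p(\tilde\lambda_s)=q_s\to q$: by completeness the minimisers issuing from $p$ and ending in a fixed compact neighbourhood of $q$ form a compact family, and in the ideal setting their initial covectors depend continuously on them (cf.\ \cite{borklin}), so a subsequence satisfies $\tilde\lambda_{s_k}\to\mu$. Continuity of $\exp_p$ gives $\exp_p(\mu)=q$, and continuity of the length functional $\lambda\mapsto\mathrm{length}(\gamma_\lambda|_{[0,1]})$ gives $\mathrm{length}(\gamma_\mu|_{[0,1]})=\lim_k d(p,q_{s_k})=d(p,q)$, so $\gamma_\mu|_{[0,1]}$ is a minimiser from $p$ to $q$. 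Uniqueness then forces $\mu=\lambda_0$; as every subsequential limit equals $\lambda_0$, in fact $\tilde\lambda_s\to\lambda_0$.

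These facts close the argument: given any neighbourhood $U$ of $\lambda_0$, for all small $s>0$ the distinct covectors $(1+s)\lambda_0$ and $\tilde\lambda_s$ both lie in $U$ and satisfy $\exp_p((1+s)\lambda_0)=\exp_p(\tilde\lambda_s)=q_s$, so $\exp_p$ is not injective on $U$. I expect the convergence $\tilde\lambda_s\to\lambda_0$ to be the only real obstacle, and it is exactly where the hypothesis is indispensable: it rests on the compactness (properness) that furnishes the subsequential limit $\mu$ together with the uniqueness that identifies $\mu$ with $\lambda_0$. Were the minimiser to $q$ not unique, $\tilde\lambda_s$ could instead accumulate at a different minimising covector, and one would recover only the global non-injectivity of $\exp_p$ on $\exp_p^{-1}(V)$ — precisely the distinction drawn in the introduction between the local and the global statements.
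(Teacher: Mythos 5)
Your construction of the two preimages is correct and genuinely different from the paper's. You step past the cut time and pair the non-minimising covector $(1+s)\lambda_0$ with the covector $\tilde\lambda_s$ of an actual minimiser from $p$ to $q_s=\gamma(1+s)$ (normal because $M$ is ideal); the two are distinct because exactly one of them generates a minimiser, so every $q_s$ has two preimages. The paper instead invokes \cite[Corollary 8.74.]{agrachev2020} --- applicable because $\lambda_0\in\mathrm{Cut}(p)\setminus\mathrm{Cut}^1(p)$ forces $\lambda_0\in\mathrm{Conj}(p)$ by \cite[Theorem 8.72.]{agrachev2020} --- to obtain points $q_k\to q$ each joined to $p$ by two distinct minimisers, and must then control \emph{two} sequences of initial covectors instead of your one. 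Your first step is more elementary and self-contained. Two smaller caveats: like the paper, you use without comment that $\lambda_0\notin\mathrm{Cut}^1(p)$ gives uniqueness of the minimiser from $p$ to $q$ (a second minimiser would itself be normal with cut time $1$, hence would exhibit $\lambda_0\in\mathrm{Cut}^1(p)$); and you assume completeness, which the paper's stated definition of ``ideal'' omits, although the existence and compactness results both you and the paper rely on do require it.

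The genuine gap is the clause ``in the ideal setting their initial covectors depend continuously on them (cf.\ \cite{borklin})''. This is the heart of the theorem and cannot be waved through. In sub-Riemannian geometry, uniform (or even strong $\mathrm{L}^2$) convergence of a sequence of minimisers does \emph{not} imply that their initial covectors are bounded, let alone convergent: unlike the Riemannian case, the covector is not determined by the velocity, and a priori $|\tilde\lambda_{s_k}|\to+\infty$ is possible. Ruling this out is exactly where ideality enters quantitatively, and it is what most of the paper's proof does: writing the Lagrange multiplier rule $\lambda_{1,k}\, D_{u_k}E_p = u_k$ (\cite[Section 8.3]{agrachev2020}) for the multipliers of the minimisers, if $|\lambda_{1,k}|\to+\infty$ one divides by $|\lambda_{1,k}|$ and passes to the limit (using the $\mathrm{L}^2$ convergence of the controls from \cite[Proposition 8.67.]{agrachev2020}) to produce a nonzero $\eta$ with $\eta\, D_{u}E_p=0$, i.e.\ an abnormal Lagrange multiplier for $\gamma$, contradicting ideality; hence the multipliers are bounded, every limit point is a normal multiplier of $\gamma$, that multiplier is unique because $\gamma$ is not abnormal, and continuity of the Hamiltonian flow then yields $\tilde\lambda_s\to\lambda_0$. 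The reference \cite{borklin} does not cover this: that paper concerns regularity and continuity of the forward map $\exp_p$, not properness or continuity of the assignment taking a minimiser to its initial covector. Inserting this rescaling argument (or an explicit citation of a compactness result for minimising covectors in ideal structures) completes your proof; the rest --- distinctness, the subsequential limit $\mu$, $\exp_p(\mu)=q$, minimality of $\gamma_\mu$, and uniqueness forcing $\mu=\lambda_0$ --- goes through as you wrote it.
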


In the statement above, $\mathrm{Cut}(p)$ denotes the cotangent cut locus, i.e. the set of initial covectors corresponding to geodesics that are minimising up to time $t = 1$ but not minimising up to time $t = 1 + \epsilon$, for any $\epsilon > 0$. The subset $\mathrm{Cut}^1(p)$ of $\mathrm{Cut}(p)$ consists of the covectors $\lambda_0$ for which there exists another $\lambda_0' \in \mathrm{Cut}(p)$ such that $\exp_p(\lambda_0) = \exp_p(\lambda_0')$.

Already in Riemannian geometry, it does not seem possible to obtain the local result as expressed in \cref{main1} in full generality just as a variation on the theme (see \cref{finalremarks}). These two results appear completely different nature for at least one reason: the proof of the non-local injectivity of the exponential map relies heavily on the regularity of the conjugate locus in the neighbourhood of a critical point. Indeed, in the neighbourhood of a generic critical point, the conjugate locus is a smooth hypersurface. We obtain the sub-Riemannian analogous of this result in \cref{locusmanifold}, which is of independent interest. Because of the Hamiltonian structure of geodesics in sub-Riemannian geometry, we must introduce a technical notion of \textit{order finiteness} of a critical point. This assumption is not necessary for the global result but we do not know if it is possible to prove \cref{locusmanifold} without it.

Let us describe the main ideas of the proof, which uses a variational argument, in the spirit of Morse and Littauer \cite{morselittauer1932}, and Savage \cite{savage1943}. Assume that the family of normal extremals starting at $p$ covers the conjugate point in a one-to-one manner. This premise allows to introduce two versions of the Poincaré-Cartan one-form: the smooth and exact one-form $\eta^*_p$ on the space of initial conditions, and $\eta_p$ on the manifold itself which is only continuous due to the presence of a conjugate point. Both forms induce line integrals $I^*$ and $I$ respectively, reminiscent of the Hilbert invariant integral in the classical theory of the calculus of variations. The line integral $I^*$ is path-independent and we then claim that $I$ is also path-independent. This fact is shown by proving that the sub-Riemannian conjugate locus is a hypersurface in the neighbourhood of a regular singularity of finite order, and by using an approximation argument which is available because of Sard's theorem. From the invariance of $I^*$, we finally conclude that the geodesic joining $p$ to the conjugate point must be length-minimising, reaching a contradiction.

\cref{main1} contributes to the general theory of conjugate points in sub-Riemannian geometry but also advances in metric geometry. In \cite{sormani} Shankar and Sormani introduced a priori different \textit{synthetic} notions of conjugate points, i.e. definitions that only depend on the metric structure of the space. Our main result implies some equivalence between them in the \textit{ideal} case, that is to say when non-trivial abnormal geodesics are absent, demonstrating that if discrepancies are to be found in sub-Riemannian geometry, abnormal segments or infinite order of conjugate points must play a role.

\begin{theorem}
\label{main2}
Let $M$ be an ideal sub-Riemannian manifold, $\gamma : \interval{0}{1} \to M$ be a normal geodesic such that its initial covector $\lambda_0$ is regular conjugate and has finite order, and denote $p := \gamma(0)$ and $q := \gamma(1)$. Then, the following statements are equivalent:
\begin{enumerate}[label=\normalfont(\roman*)]
    \item $q$ is conjugate to $p$ along $\gamma$;
    \item $q$ is one-sided conjugate to $p$ along $\gamma$;
    \item $q$ is symmetrically conjugate to $p$ along $\gamma$.
    %\item $p$ and $q$ are unreachable conjugate points along $\gamma$;
    %\item $p$ and $q$ are ultimate conjugate points along $\gamma$.
\end{enumerate}
Furthermore, if $p$ and $q$ are unreachable conjugate points along $\gamma$, then $q$ is also conjugate to $p$ along $\gamma$.
\end{theorem}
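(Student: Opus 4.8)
The strategy is to run the cycle of implications (i) $\Rightarrow$ (ii) $\Rightarrow$ (iii) $\Rightarrow$ (i), with the decisive step (i) $\Rightarrow$ (ii) supplied by \cref{main1}, and to exploit throughout the defining feature of an ideal structure: the absence of non-trivial abnormal minimisers means that every length-minimiser between nearby points is strongly normal, hence the base projection of a unique Hamiltonian trajectory, that is, of a covector in $\mathrm{Dom}(\exp_p)$. This dictionary between minimising geodesics and fibres of $\exp_p$ is what converts the synthetic (metric) statements (ii) and (iii) into assertions about the injectivity of the exponential map, and conversely.

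For (i) $\Rightarrow$ (ii), suppose $q$ is conjugate to $p$ along $\gamma$, so that $\lambda_0$ is a critical covector of $\exp_p$; by hypothesis it is regular and of finite order, and since $M$ is ideal the geodesic $\gamma$ is strongly normal. These are precisely the hypotheses of \cref{main1}, so $\exp_p$ is not injective on any neighbourhood of $\lambda_0$. Taking a sequence of neighbourhoods shrinking to $\lambda_0$, I obtain distinct covectors $\lambda_i \neq \lambda_i'$ converging to $\lambda_0$ with a common image $q_i := \exp_p(\lambda_i) = \exp_p(\lambda_i')$, and continuity of the Hamiltonian flow gives $q_i \to q$ together with uniform convergence of the two associated normal geodesics to $\gamma$. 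Keeping the initial point $p$ fixed, this is exactly the data witnessing that $q$ is one-sided conjugate to $p$ along $\gamma$.

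The implication (ii) $\Rightarrow$ (iii) is immediate from the definitions, since a one-sided witness, in which the initial endpoint is frozen at $p$, is a particular symmetric witness. For (iii) $\Rightarrow$ (i), a symmetric witness provides, for base points $p_i \to p$, pairs of distinct minimising geodesics to common endpoints converging to $\gamma$; by the ideal dictionary these lift to distinct covectors in $\mathrm{Dom}(\exp_{p_i})$ with equal image, so each $\exp_{p_i}$ is non-injective arbitrarily close to the covector of $\gamma$. A limiting argument, using the joint continuity of the exponential map in base point and covector established in \cite{borklin}, then forces $\exp_p$ to be non-injective on every neighbourhood of $\lambda_0$; since a map with invertible differential is a local diffeomorphism, $d(\exp_p)_{\lambda_0}$ must be singular, which is to say $q$ is conjugate to $p$. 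The final clause is handled in the same vein: were $\lambda_0$ not critical, $\exp_p$ would be a local diffeomorphism near $\lambda_0$ and points just beyond $q$ along $\gamma$ would be reachable by minimisers close to $\gamma$, contradicting unreachability; hence $\lambda_0$ is critical and (i) holds.

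I expect the main obstacle to lie in (i) $\Rightarrow$ (ii), namely in upgrading the two extremals produced by \cref{main1} to genuine \emph{minimising} geodesics between $p$ and $q_i$, as the synthetic definitions demand. The difficulty is that covectors near a conjugate point need not be minimising, so one must select $\lambda_i, \lambda_i'$ on the minimising side of the cut locus. Here I would use the finite-order and regularity assumptions together with the hypersurface structure of the conjugate locus from \cref{locusmanifold} to control the cut locus near $q$, ensuring that the non-injectivity of \cref{main1} is realised by minimisers; verifying this compatibility is the technical heart of the argument.
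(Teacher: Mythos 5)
Your architecture agrees with the paper's where it is sound: (i)~$\Rightarrow$~(ii) is exactly the paper's argument (apply \cref{noninjthm} on a shrinking sequence of neighbourhoods of $\lambda_0$, and read off convergence of the resulting normal geodesics from convergence of their initial covectors), and (ii)~$\Rightarrow$~(iii) is the same triviality. The genuine gap is in your (iii)~$\Rightarrow$~(i). Your symmetric witness gives distinct covectors $\eta_n^1 \neq \eta_n^2 \in \mathrm{T}^*_{p_n}(M)$ with $\exp_{p_n}(\eta_n^1) = \exp_{p_n}(\eta_n^2)$ and $(p_n,\eta_n^i) \to (p,\lambda_0)$, and you then claim that a ``limiting argument'' using joint \emph{continuity} forces $\exp_p$ itself to be non-injective in every neighbourhood of $\lambda_0$. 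That inference is invalid: non-injectivity of a sequence of maps does not pass to the limit map (e.g.\ $f_n(x) = x^3 - x/n$ is non-injective near $0$ for every $n$, while the limit $x \mapsto x^3$ is injective), and the distinct pairs $\eta_n^1 \neq \eta_n^2$ may simply collapse onto the single point $\lambda_0$. The paper avoids this entirely: it argues by contraposition on the \emph{joint} map $E : \mathrm{T}^*(M) \to M \times M : (x,\eta) \mapsto (x, \exp_x(\eta))$. If $q$ were not conjugate to $p$, then $\mathrm{d}_{(p,\lambda_0)}E$ would be an isomorphism, so by the inverse function theorem $E$ would be injective on a neighbourhood of $(p,\lambda_0)$; but the witnesses give $E(p_n,\eta_n^1) = (p_n,q_n) = E(p_n,\eta_n^2)$ with $(p_n,\eta_n^1) \neq (p_n,\eta_n^2)$ accumulating at $(p,\lambda_0)$, a contradiction. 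Note that this never establishes (and never needs) non-injectivity of $\exp_p$ at the limit; your route could only obtain it circularly, by first knowing (i) and reapplying \cref{main1}. The same repair is required in your sketch of the unreachable clause, where the base point also varies, so the local diffeomorphism must again be that of $E$, used to produce, for the \emph{given} sequences $p_n \to p$, $q_n \to q$, geodesics joining them that converge to $\gamma$.

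A secondary point: the ``technical heart'' you anticipate in (i)~$\Rightarrow$~(ii) --- upgrading the two extremals of \cref{main1} to globally \emph{minimising} geodesics via cut-locus control and \cref{locusmanifold} --- is not required, and insisting on it would in fact break the theorem. In \cref{defconj} the geodesics are to be read as normal (locally minimising) geodesics, as the paper's remark on convergence of $\exp_{p_n}(t\lambda_0^n)$ makes explicit; this matches the length-space convention of Shankar--Sormani. Had one demanded global minimisers, the implication (i)~$\Rightarrow$~(ii) would generally be false: in the convergence used here lengths converge as well, so a uniform limit of minimising geodesics is minimising, and hence no sequence of minimisers can converge to $\gamma$ whenever $\gamma$ itself fails to be minimising --- which is precisely the typical situation at or beyond a conjugate point. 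So the extremals produced by \cref{noninjthm} are already exactly the witnesses the definition asks for.
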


The paper is organised as follows. In \cref{prelim}, we summarise notions from sub-Riemannian geometry that will be important in our argument. The family of extremals is introduced in \cref{familyextremals} while in \cref{hilbertinvariant} we define the Hilbert integrals. In \cref{conjugatelocusisregular}, we address the regularity of the sub-Riemannian conjugate locus. This result is a contribution to sub-Riemannian geometry of independent interest. Finally, we prove \cref{main1} in \cref{morselittauersavage} and discuss \cref{main2}, as well as possible future work and related open questions, in \cref{applications}. 

                   % Introduction

%\textbf{Declarations of interest}: none.

\section*{Data availability statement}

Data sharing not applicable to this article as no datasets were generated or analysed during the current study.

\section*{Acknowledgements}

We would like to thank Andrei Agrachev and Luca Rizzi for their interest and comments.

This project has received funding from the European Research Council (ERC) under the European Union’s Horizon 2020 research and innovation programme (grant agreement No. 945655).		% Acknowledgements

\section{Preliminaries}
\label{prelim}

%\subsection{The metrizable topology of sub-Riemannian manifolds}

We begin with a general description of \textit{sub-Riemannian geometry}, one that includes rank-varying structures. We refer the reader to \cite{agrachev2020}, for example, for details on sub-Riemannian geometry.

A sub-Riemannian structure on a manifold $M$ is given by a set of smooth global vector fields $X_1, \dots, X_m$ called the \textit{generating family}. The distribution at a point $p \in M$ is $\D_p := \mathrm{span}\{X_1(p), \dots, X_m(p)\}$. The \textit{rank} of the sub-Riemannian structure at $p \in M$ is $\mathrm{rank}(p) := \mathrm{dim}(\D_p)$. Observe that in our definition, a sub-Riemannian manifold may be rank-varying. An inner product on $\D_p$ is induced by the polarisation formula applied to the norm
\[
\|u\|^2_{\D_p} := \min \left\{ \sum_{k = 1}^{m} u^2_i \ \middle| \ \sum_{k = 1}^{m} u_i X_k(p) = u \right\}.
\]

A curve $\gamma : \interval{0}{T} \to M$ with initial value $\gamma(0) = p \in M$ is horizontal if there exists $u \in \L^{2}(\interval{0}{T}, \R^m)$, called a \textit{control}, such that $\dot{\gamma}(t) = \sum_{k = 0}^m u_k(t) X_k(\gamma(t))$ for almost every $t \in \interval{0}{T}$. In fact, from Carathéodory's existence theorem, there exists a unique maximal Lipschitz solution to the Cauchy problem
\begin{equation}
\label{cauchyprob}
\begin{cases}
	\dot{\gamma}(t) = \sum_{k = 0}^m u_k(t) X_k(\gamma(t)) \\
	\gamma(0) = p
\end{cases}
\end{equation}
for every $u \in \L^{2}(\interval{0}{T}, \R^m)$ and $p \in M$.% and $t_0 \in \interval{0}{T}$.

The \textit{sub-Riemannian length} and the \textit{sub-Riemannian energy} of $\gamma$ are defined by
	\begin{equation}
		\label{length}
		\L(\gamma) = \int_{0}^{T} \lVert \dot{\gamma} (t) \rVert_{\D_{\gamma(t)}} \diff t, \ \J(\gamma) = \dfrac{1}{2} \int_{0}^{T} \lVert \dot{\gamma} (t) \rVert^2_{\D_{\gamma(t)}} \diff t.
	\end{equation}
	
In the case where every two points can be joined by a horizontal curve, we have a well-defined distance function on $M$.

\begin{definition}
	The distance function of a sub-Riemannian manifold $M$, also called the \textit{Carnot-Carathéodory distance}, is defined by
	\[
	\diff(x, y) := \inf \{ \L(\gamma) \mid \gamma : [0,T] \to M \text{ is horizontal and } \gamma(0) = x \text{ and } \gamma(T) = y \}.
	\]
\end{definition}

In this work, we assume that the sub-Riemannian structures satisfy the \textit{Hörmander condition}, that is to say $\Lie_p(\D) = \T_p(M)$ for all $p \in M$, where $\Lie_p(\D)$ is defined as the smallest Lie algebra equipped with the Lie bracket of vector fields that contains $\D$. In that case, we also say that $\D$ is \textit{bracket-generating}. This is motivated by the well-known result from Chow and Rashevskii, see {\cite[Theorem 3.31.]{agrachev2020}}.

\begin{theorem}[Chow–Rashevskii theorem]
	Let $M$ be a sub-Riemannian manifold such that its distribution $\D$ is $\mathcal{C}^\infty$ and satisfies the Hörmander condition. Then, $(M, \diff)$ is a metric space and the manifold and metric topology of $M$ coincide.
\end{theorem}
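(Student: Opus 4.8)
The plan is to prove the result in three stages: accessibility (Chow's theorem), the verification that $\diff$ is a metric, and the comparison of topologies, the latter two resting on a local two-sided estimate. Throughout I assume $M$ connected, as the finiteness of $\diff$ requires it.

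First I would fix $p \in M$ and show that the reachable set $\mathcal{R}(p)$, consisting of all endpoints of horizontal curves issuing from $p$, is open. Since the Hörmander condition guarantees that iterated Lie brackets of $X_1, \dots, X_m$ span $\T_p(M)$, I would select $n = \dim M$ such bracket fields $Y_1, \dots, Y_n$ that are linearly independent at $p$. The key analytic input is the commutator formula for flows,
\[
\Phi^Y_{-s} \circ \Phi^X_{-s} \circ \Phi^Y_{s} \circ \Phi^X_{s} = \Phi^{[X,Y]}_{s^2} + O(s^3),
\]
which shows that motion in a bracket direction is approximated, to leading order, by a concatenation of flows of the generating fields, each of which is a horizontal motion. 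Iterating this, the flow of any iterated bracket is a uniform limit of horizontal concatenations. I would then assemble these into a map $\phi$ from a neighbourhood of $0 \in \R^n$ into $M$ by composing the (approximate) bracket flows along $Y_1, \dots, Y_n$; by construction $\phi(0) = p$, the differential $d\phi_0$ is invertible, and every $\phi(t)$ is the endpoint of a horizontal curve. The inverse function theorem then makes $\phi$ a local diffeomorphism whose image lies in $\mathcal{R}(p)$, so $\mathcal{R}(p)$ is open. Since mutual reachability is an equivalence relation with open classes and $M$ is connected, there is a single class: any two points are joined by a horizontal curve, and $\diff < \infty$.

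For the metric axioms, symmetry of $\diff$ follows by reversing a horizontal curve together with its control, the triangle inequality by concatenation, and $\diff(x,x) = 0$ trivially. Positivity and the topology statement both come from working in a fixed chart around a point equipped with a background Riemannian norm $\lvert \cdot \rvert$. Since $\D_q \subseteq \T_q(M)$ and the $X_k$ are bounded on a compact coordinate neighbourhood, every horizontal velocity is dominated by a constant multiple of its Euclidean norm, giving the lower bound $\diff(x,y) \geq c\,\lvert x - y \rvert$ for $x, y$ near the point. This forces $\diff(x,y) = 0 \Rightarrow x = y$, so $\diff$ is a genuine metric, and it shows that every Euclidean ball contains a $\diff$-ball, whence the metric topology is finer than the manifold topology. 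The reverse inclusion reuses the accessibility map: as the bracket-flow parameters $t \to 0$, the endpoint $\phi(t) \to p$ while the length of the associated horizontal concatenation tends to $0$, so $\diff(p, \phi(t)) \to 0$; since $\phi$ covers a manifold-neighbourhood of $p$, every $\diff$-ball about $p$ contains such a neighbourhood, and the manifold topology is finer than the metric one. The two inclusions give that the topologies coincide.

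The heart of the argument, and the main obstacle, is the quantitative accessibility in the first stage: extracting from the commutator construction both the openness of $\mathcal{R}(p)$ and the length control $\mathrm{length} \to 0$ as $t \to 0$. The delicate bookkeeping is showing that the approximating horizontal concatenations converge uniformly, with controlled length, to the genuine bracket flows, and that the resulting $\phi$ is $C^1$ with the claimed differential at the origin; once this is in place, everything else reduces to elementary metric-space manipulations. (The sharp Hölder estimate $\diff(x,y) \leq C \lvert x - y \rvert^{1/r}$ would follow by pushing the same construction further, but it is not needed for the topological conclusion.)
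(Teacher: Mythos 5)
The paper does not prove this theorem at all: it quotes it directly from \cite[Theorem 3.31]{agrachev2020}, so your proposal can only be measured against the standard arguments, in particular the one in that cited reference. Your overall architecture (accessibility first, then the metric axioms and the topology comparison via a local two-sided estimate) is the right one, and the second and third stages are essentially fine modulo bookkeeping (one slip: you want the \emph{Euclidean} norm of a horizontal velocity dominated by a multiple of its control norm, not the other way round, to get $\diff(x,y) \geq c\,|x-y|$). The problem is the core of stage one.

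The gap is in applying the inverse function theorem to the map $\phi$ assembled from ``(approximate) bracket flows'', for which you claim simultaneously that $\mathrm{d}\phi_0$ is invertible and that every $\phi(t)$ is an endpoint of a horizontal curve. These two properties hold for two \emph{different} maps, and no single map has both. If $\phi$ is built from the genuine flows of the bracket fields $Y_1, \dots, Y_n$, it is smooth with invertible differential at $0$, but its image points are only \emph{limits} of endpoints of horizontal concatenations --- bracket flows are not horizontal motions --- so openness of its image says nothing about openness of $\mathcal{R}(p)$. If instead $\phi$ is the horizontal approximation built from flows of the $X_k$, its image does lie in $\mathcal{R}(p)$, but realizing a step-$r$ bracket direction with parameter $t$ forces reparametrizations of order $t^{1/r}$ (from $\Phi^{[X,Y]}_{s^2} + O(s^3)$ one must set $s = \sqrt{t}$, and worse for deeper brackets), so the resulting map has the form $\phi(t) = \psi(t) + O(|t|^{1+\delta})$ with $\psi$ smooth: it is continuous and differentiable at the origin, but not $\mathcal{C}^1$ in any neighbourhood of it, and differentiability at a single point is not enough for the inverse function theorem. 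This is the classical pitfall in proofs of Chow--Rashevskii. The standard repairs are: (a) keep your approximation map but replace the inverse function theorem by a Brouwer degree / invariance-of-domain argument, since a continuous map agreeing with an invertible linear map up to $o(|t|)$ is open at the origin; or (b) follow the route of \cite{agrachev2020}, which works only with honestly smooth endpoint maps $(t_1,\dots,t_N) \mapsto \mathrm{e}^{t_N X_{i_N}} \circ \cdots \circ \mathrm{e}^{t_1 X_{i_1}}(p)$, shows by a rank-maximality argument (using bracket generation) that some such map has full rank at parameter values arbitrarily close to $0$, and then doubles back along the same flows to drag the resulting open set onto an open neighbourhood of $p$ inside $\mathcal{R}(p)$; or (c) invoke Sussmann's orbit theorem. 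Note that your topology comparison inherits the same gap, since it relies on $\phi$ covering a manifold-neighbourhood of $p$; once openness is repaired by (a) or (b), the remaining length-control and estimate arguments in your outline go through.
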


On the space of controls $\L^{2}(\interval{0}{T}, \R^m)$, we can define a \textit{length functional}, as well as a corresponding \textit{energy functional}
\[
\mathrm L(u) := \int_{0}^{T} \| u(t) \|_{\R^m} \diff t, \ \mathrm J(u) := \dfrac{1}{2} \int_{0}^{T} \| u(t) \|^2_{\R^m} \diff t.
\]
Given an horizontal curve $\gamma : \interval{0}{T} \to M$, we define at every differentiability point of $\gamma$ the \textit{minimal control} $\overline{u}$ associated with $\gamma$
\[
\overline{u}(t) := \arg \min \left\{ \| u \|_{\R^m} \mid u \in \R^m, \dot{\gamma}(t) = \sum_{k = 0}^m u_k X_k(\gamma(t)) \right\}.
\]
The relationship with the functionals defined in \cref{length} is the following: $\L(\gamma) = \L(\overline{u})$ and $\J(\gamma) = \J(\overline{u})$ when $\overline{u}$ is the minimal control associated with $\gamma$. Through the Cauchy problem \eqref{cauchyprob}, it can be seen that finding a length minimiser for $\L$ among the horizontal curves with fixed end-points $\gamma(0) = p$ and $\gamma(T) = q$ is equivalent to finding a minimal control for $L$ for which the associated path joins $p$ and $q$. Furthermore, we have the following classical correspondence: a horizontal curve $\gamma : \interval{0}{T} \to M$ joining $p$ to $q$ is a minimiser of $\J$ if and only if it is a minimiser of $\L$ and is parametrised by constant speed.

Now that we can turn a sub-Riemannian manifold into a metric space, we would like to study the geodesics associated with its distance function. These would be horizontal curves that are locally minimising the sub-Riemannian length functional. Because of the lack of a torsion-free metric connection, we cannot have a geodesic equation through a covariant derivative. Rather, sub-Riemannian geodesic are characterised via Hamilton's equations. %We recall that the Hamiltonian vector field of a function $a \in C^{\infty}(\T^*(M))$ is the vector field $\overrightarrow{a} \in \Gamma(\T^*M)$ defined by
%\[
%\omega(\cdot, \overrightarrow{a}(\lambda)) = \diff_\lambda a, \qquad \forall \lambda \in \T^*(M),
%\]
%where $\omega$ denotes the canonical symplectic form on the cotangent bundle $\T^*(M)$.

%The smooth \textit{control-dependent Hamiltonian} of a sub-Riemannian structure is the map $h : \R^m \times \T^*(M) \to \R$ defined as 
%\[
%h_u(\lambda) = \sum_{k = 1}^{m} u_k \langle \lambda, X_k(\pi(\lambda)) \rangle - \dfrac{1}{2} \sum_{k = 1}^{m} u_k ^2.
%\]
%It is easy to see that, by strict convexity, there exists a unique maximum $\overline{u}(\lambda)$ of $u \mapsto h_u(\lambda)$ for every $\lambda \in \T^*(M)$. 
The \textit{Hamiltonian} of the sub-Riemannian structure is defined by
\begin{equation}
    \label{hamiltonmax}
    H : \T^*(M) \to \mathds{R} : \lambda \mapsto H(\lambda) := \max_{u \in \R^m} \left( \sum_{k = 1}^{m} u_k \langle \lambda, X_k(\pi(\lambda)) \rangle - \dfrac{1}{2} \sum_{k = 1}^{m} u_k ^2 \right).
\end{equation}
%\begin{equation}
%    \label{hamiltonmax}
%    H : \T^*(M) \to \mathds{R} : \lambda \mapsto H(\lambda) := \max_{u \in \R^m} h_u(\lambda).
%\end{equation}
The Hamiltonian $H$ may written in terms of the generating family of the sub-Riemannian structure $(X_1, \dots, X_m)$ as follows
\[
H(\lambda) = \frac{1}{2}\sum_{k = 1}^m h_k(p, \lambda_0)^2, \qquad \forall \lambda \in T^*(M),
\]
where $h_k(p, \lambda_0) := \langle \lambda_0, X_k(p) \rangle$. For $p \in M$, we will also write $H_p$ for the restriction of $H$ to the cotangent space $\T_p^*(M)$.
%The Lagrange multipliers rule may be further developed to characterise normal extremals as curves that satisfy Hamilton's differential equation. Alternatively, t
The following result can also be seen as an application of the Maximum Principle of Pontryagin to the sub-Riemannian energy minimisation problem.

\begin{theorem}[Pontryagin's Maximum Principle]
	Let $\gamma : \interval{0}{T} \to M$ be a horizontal curve which is a length minimiser among horizontal curves, and parametrised by constant speed. Then, there exists a Lipschitz curve $\lambda(t) \in \mathrm{T}^*_{\gamma(t)}(M)$ such that one and only one of the following is satisfied:
	\begin{enumerate}
		\item[(N)] \label{hamiltoneq} $\dot{\lambda} = \overrightarrow{H}(\lambda)$, where $\overrightarrow{H}$ is the unique vector field in $\mathrm{T}^*(M)$ such that $\omega(\cdot, \overrightarrow{H}(\lambda)) = \mathrm{d}_\lambda H$ for all $\lambda \in \mathrm{T}^*(M)$ and $\omega$ denotes the canonical symplectic form on the cotangent bundle $\T^*(M)$;
		\item[(A)] \label{abnormalcondi} $\omega_{\lambda(t)}(\dot{\lambda} (t), \cap_{k = 1}^n \mathrm{ker}(\mathrm{d}_{\lambda(t)} h_k)) = 0$ for all $t \in \interval{0}{T}$.
	\end{enumerate}
	Moreover, in case (A), we have $\lambda(t) \neq 0$ for every $t \in \interval{0}{T}$.
\end{theorem}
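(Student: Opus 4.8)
The plan is to recast the constant-speed length minimisation as a constrained optimisation on the Hilbert space of controls and to derive the two alternatives from the Lagrange multiplier rule, in the spirit of the variational proof of the Maximum Principle in \cite{agrachev2020}. Since $\gamma$ is a constant-speed minimiser, its minimal control $\overline u \in \L^2(\interval{0}{T},\R^m)$ minimises the energy $\J$ among all controls whose trajectory, obtained from the Cauchy problem \eqref{cauchyprob}, joins $p := \gamma(0)$ to $q := \gamma(T)$. I would introduce the \emph{endpoint map} $\E_p : \L^2(\interval{0}{T},\R^m) \to M$, $\E_p(u) := \gamma_u(T)$, where $\gamma_u$ solves \eqref{cauchyprob}. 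Under the smoothness of the generating family $X_1,\dots,X_m$ this map is of class $\mathcal C^1$, so $\overline u$ is a critical point of $\J$ restricted to the level set $\E_p^{-1}(q)$.

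First I would compute the differential $D_{\overline u}\E_p$ by linearising \eqref{cauchyprob}: the trajectory variation produced by a control perturbation is given by a Duhamel formula involving the pushforward of the fields $X_k$ along the flow of $\overline u$, and this identifies $\Ima(D_{\overline u}\E_p) \subseteq \T_q(M)$. This is the computation on which everything else rests. Because the target $M$ is finite-dimensional, the image is closed and the Lagrange multiplier rule applies, furnishing a pair $(\nu,\ell) \in \{0,1\}\times\T_q^*(M)$, with $(\nu,\ell)\neq(0,0)$, such that
\[
\ell \circ D_{\overline u}\E_p = \nu\, \mathrm d_{\overline u}\J .
\]

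Next I would produce the lift by transporting $\ell$ backward along the geodesic. Writing $P_t : M \to M$ for the flow of the time-dependent field $\sum_k \overline u_k X_k$ carrying $\gamma(t)$ to $q$, I set $\lambda(t) := (\mathrm d_{\gamma(t)} P_t)^* \ell \in \T^*_{\gamma(t)}(M)$. Because $\gamma$ has constant speed, $\overline u \in \L^\infty$, so $P_t$ is Lipschitz in $t$ and $\lambda$ is a Lipschitz curve covering $\gamma$; moreover $\lambda(t)\neq 0$ whenever $\ell\neq 0$, the backward transport being a fibrewise linear isomorphism. It then remains to read off the two cases. In the normal case $\nu = 1$, the identity $\ell\circ D_{\overline u}\E_p = \mathrm d_{\overline u}\J$ recovers the control from the lift, $\overline u_k(t) = \langle \lambda(t), X_k(\gamma(t))\rangle = h_k(\lambda(t))$ for each $k$; combined with $H = \tfrac12\sum_k h_k^2$ and the defining relation $\omega(\cdot,\overrightarrow H(\lambda)) = \mathrm d_\lambda H$, this is exactly Hamilton's equation $\dot\lambda = \overrightarrow H(\lambda)$, whence $\lambda$ is in fact smooth. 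In the abnormal case $\nu = 0$, the identity becomes $\ell\circ D_{\overline u}\E_p = 0$, so $\ell$ annihilates $\Ima(D_{\overline u}\E_p)$; transporting this relation and differentiating in $t$ turns it into the symplectic condition (A), while nontriviality of $(\nu,\ell)$ with $\nu=0$ forces $\ell\neq 0$ and hence $\lambda(t)\neq 0$ for every $t$.

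The hard part will be two-fold. The first difficulty is analytic: making the differentiation of $\E_p$ rigorous on the infinite-dimensional space $\L^2(\interval{0}{T},\R^m)$ and extracting the pointwise, almost-everywhere consequences of the \emph{integral} Lagrange identity — this is precisely where the Lipschitz regularity of $\lambda$ and the costate equation of case (N) are earned. The second is the exclusivity asserted by ``one and only one'': the value of $\nu\in\{0,1\}$ selects the recorded alternative, and one must argue that $\nu$ can be normalised so that a single case is attached to the constructed lift — the multiplier is normal ($\nu=1$) exactly when $\overline u$ is a regular point of $\E_p$, and is forced to be abnormal ($\nu=0$) otherwise. The remaining points, namely the constant-speed normalisation of $H$ along $\gamma$ and the nontriviality of $(\nu,\ell)$, are routine bookkeeping once the differential of $\E_p$ is in hand.
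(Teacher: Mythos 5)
You should first note that the paper does not prove this theorem at all: it is quoted in the preliminaries as a classical result, a specialisation of Pontryagin's Maximum Principle to the sub-Riemannian energy problem, with \cite{agrachev2020} as the source. Your sketch is, in outline, exactly the standard proof given there: pass to the minimal control $\overline u$, differentiate the endpoint map $\E_p$ via the Duhamel formula, invoke a Lagrange multiplier rule to obtain a pair $(\nu,\ell)$, transport $\ell$ backwards along the flow of $\sum_k \overline u_k X_k$ to build the Lipschitz lift $\lambda(t)$, and read off case (N) from $\overline u_k(t) = h_k(\lambda(t))$ and case (A) from the annihilation of $\Ima(D_{\overline u}\E_p)$. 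So there is no divergence of method to report; the question is only whether your sketch survives scrutiny at its two pressure points.

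It does not quite, in two places. First, the multiplier rule cannot be applied to ``$\J$ restricted to the level set $\E_p^{-1}(q)$'': at a critical point of $\E_p$ (which is exactly the abnormal situation) the level set need not be a manifold, so there is no constrained critical-point calculus available there. The correct argument, as in \cite{agrachev2020}, applies to the \emph{extended} map $(\J, \E_p) : \L^2(\interval{0}{T},\R^m) \to \R \times M$: optimality of $\overline u$ forces its differential to be non-surjective (otherwise the implicit function theorem in the Hilbert space of controls would let one reach $q$ with strictly smaller energy), and a nonzero annihilator $(\nu,\ell)$ of the image of this extended differential, normalised so that $\nu \in \{0,1\}$, is what produces the dichotomy. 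Your remark that ``the image is closed'' is true but not the point. Second, your mechanism for the exclusivity --- ``the multiplier is normal exactly when $\overline u$ is a regular point of $\E_p$, and is forced to be abnormal otherwise'' --- is false in one direction: at a critical point of $\E_p$ an abnormal multiplier always exists, but a normal multiplier may coexist with it, and indeed the paper itself records that ``an extremal could be both normal and abnormal.'' The ``one and only one'' in the statement attaches to the constructed pair: once $(\nu,\ell)$ is fixed and normalised, either $\nu = 1$, yielding a lift satisfying (N), or $\nu = 0$ with $\ell \neq 0$, yielding a lift satisfying (A) with $\lambda(t) = (\mathrm{d}P_t)^*\ell \neq 0$ everywhere by linearity of the backward transport; it is not a dichotomy on the control $\overline u$, and your argument as written would wrongly exclude normal lifts for strictly-critical controls that happen to admit them. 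With the extended-map formulation in place of your constrained formulation, the rest of your outline (Duhamel computation, costate equation $\dot\lambda = \sum_k \overline u_k \vec{h}_k(\lambda)$ holding almost everywhere, bootstrap to smoothness in case (N), and the vanishing of $\omega_{\lambda(t)}(\dot\lambda(t), \cap_{k}\mathrm{ker}(\mathrm{d}_{\lambda(t)}h_k))$ in case (A)) is sound and standard.
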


If $\lambda$ is a curve in $\T^*(M)$ that satisfies $(N)$ (resp. $(A)$), we will also say that $\lambda$ is a normal extremal (resp. abnormal extremal). However, the projection of an abnormal extremal to $M$ might not be locally minimising. The same terminology is used for the corresponding curve $\gamma = \pi(\lambda)$ and for the minimal control associated to $\gamma$. Here, the map $\pi : \mathrm{T}^*(M) \to M$ denotes the bundle projection. We note that an extremal could be both normal and abnormal. A normal trajectory $\gamma : \interval{0}{T} \to M$ is called \textit{strictly normal} if it is not abnormal. If, in addition, the restriction $\gamma|_{\interval{0}{s}}$ is strictly normal for every $s > 0$, we say that $\gamma$ is \textit{strongly normal}. It can be seen that $\gamma$ is strongly normal if and only if the normal geodesic $\gamma$ does not contain any abnormal segment. The projection of a normal extremal to $M$ is locally minimising, that is to say it is a geodesic (for the sub-Riemannian distance) parametrised by constant-speed. More precisely, it holds $\frac{1}{2} \| \dot{\gamma} (t) \|^2 = H(\lambda(t))$ for every $t \in \interval{0}{T}$.

%The study of abnormal geodesics is an area of intensive research. 
Sub-Riemannian structures which do not admit any non-trivial (i.e. non-constant) abnormal geodesics (the trivial geodesic is always abnormal as soon as $\mathrm{rank}(\D_p) < \mathrm{dim} (M)$) are said to be \emph{ideal}.

The theory of ordinary differential equations proves the existence of a maximal solution to $(N)$ for every given initial condition $\lambda(0) \in \T^*(M)$. The time $t$ - flow of Hamilton's equation $(N)$ is the semigroup denoted by $\me^{t \overrightarrow{H}} : \T^*(M) \to \T^*(M)$. The restriction of the time $1$ - map of this flow to the fibre $\T^*_p(M)$, followed by projection to the base, is called the sub-Riemannian \textit{exponential map} based at $p$.
\begin{definition}
%The Hamiltomian flow associated to $H : T^*M \to \mathds{R}$ is the semigroup in $t \ge 0$ of maps
%$$ \me^{t \overrightarrow{H}} : T^*M \to T^*M :  \lambda(0) = (q(0),\lambda_0) %\mapsto \lambda(t) =  (q(t), \lambda_t) % .
%$$
%Here, 
	The sub-Riemannian \textit{exponential map} at $p \in M$ is the map
	\[
	\exp_p : U_p \to M : \lambda \mapsto \pi(\me^{\overrightarrow{H}}(\lambda)),
	\]
	where $U_p \subseteq \T^*_p(M)$ is the open set of covectors such that the corresponding solution of $(N)$ is defined on the interval $\interval{0}{1}$.
\end{definition}

The sub-Riemannian exponential map $\exp_p$ is smooth. If $\lambda : \interval{0}{T} \to \T^*(M)$ is the normal extremal that satisfies the initial condition $\lambda(0) = (p, \lambda_0) \in \T^*(M)$, then the corresponding normal extremal path $\gamma(t) = \pi(\lambda(t))$ by definition satisfies $\gamma(t) = \exp_p(t \lambda_0)$ for all $t \in \interval{0}{T}$. If $M$ is complete for the Carathéodory distance, then $U_p = \T^*_p(M)$, and if in addition there are no stricly abnormal length minimisers, then the exponential map $\exp_p$ is surjective. Contrary to the Riemannian case, the sub-Riemannian exponential map is not necessarily a diffeomorphism of a small ball in $\T^*_p(M)$ onto a small geodesic ball in $M$. In fact, $\Ima (\diff_0 \exp_p) = \D_p$ and $\exp_p$ is a local diffeomorphism at $0 \in \T^*_p(M)$ if and only if $\D_p = \T_p(M)$.

\begin{definition}
The critical points of the exponential map $\mathrm{exp}_p : U_p \to M$ are called {\it conjugate covectors at $p$}. We denote by $\mathrm{Conj}(p) \subseteq U_p$ the collection of all such covectors. If $s \lambda_0 \in \mathrm{T}^*_p(M)$ is conjugate, we say that the point $q := \exp_p(s \lambda_0)$ is conjugate to $p = \exp_p(0 \cdot \lambda_0)$ along the normal geodesic $\gamma(t) := \mathrm{exp}_p(t \lambda_0),$ and that $s$ is a conjugate time. %Furthermore, we let $\mathrm{Conj}_{\mathcal{U}}(p) := \mathrm{Conj}(p) \cap \mathcal{U}$ if $\mathcal{U} \subseteq \mathrm{T}^*_p(M)$.
\end{definition}

We mention here some important properties related to conjugacy. The restriction of a normal extremal $\gamma$ to an interval $\interval{t}{t+\epsilon}$ is abnormal if and only if $\gamma(s)$ is a conjugate point to $\gamma(0)$ for all $s \in \interval{t}{t+\epsilon}$ (\cite[Theorem 8.47]{agrachev2020}). Furthermore, the set of conjugate times of $\gamma$ is discrete if $\gamma$ does not contain does not contain abnormal segments (\cite[Corollary 8.51]{agrachev2020}). We will also use the following result from {\cite[Theorem 8.61.]{agrachev2020}}.

\begin{theorem}
\label{conjptnotmin}
Let $\gamma : \interval{0}{T} \to M$ be a normal extremal that does not contain abnormal segments. If $\gamma$ has no conjugate points, then it is a local minimiser for the length on the space of admissible trajectories with the same endpoints. If $\gamma$ contains at least one conjugate point to $\gamma(0)$, then it is not a local minimiser %with respect to the $W^{1, 2}$ topology 
on the space of admissible trajectories with the same endpoints.
\end{theorem}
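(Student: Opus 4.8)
This statement is quoted from \cite{agrachev2020}; I sketch the second-variation proof I would give, as the same machinery underpins the later arguments. The plan is to reduce both assertions to the sign of the Hessian of the energy along $\gamma$ under the fixed-endpoint constraint, and then to control that sign through the conjugate times. First I would take $u \in \L^2(\interval{0}{T}, \R^m)$ to be the minimal control of $\gamma$ and let $E_p$ denote the endpoint map, sending a control $v$ to the value at time $T$ of the solution of \eqref{cauchyprob}. By Pontryagin's Maximum Principle $u$ is a critical point of the energy $\J$ constrained to the fibre $E_p^{-1}(q)$, where $q := \gamma(T)$, with Lagrange multiplier the final covector $\lambda(T)$. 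Since $\gamma$ contains no abnormal segment, the full interval $\interval{0}{T}$ is itself non-abnormal, so $\mathrm{d}_u E_p$ is surjective; hence $E_p^{-1}(q)$ is a genuine submanifold near $u$ and the local minimality of $\gamma$ is governed by the constrained Hessian $\mathcal{Q}_T$ of $\J$ on $\Kern(\mathrm{d}_u E_p)$. Positive definiteness of $\mathcal{Q}_T$ would force $\gamma$ to be a strict local minimiser, while a negative direction of $\mathcal{Q}_T$ realisable by a fixed-endpoint admissible variation would force $\gamma$ not to be one.

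I would then follow the family of Hessians $\mathcal{Q}_s$ of the restricted problems $\gamma|_{\interval{0}{s}}$ as $s$ grows from $0$ to $T$. The key dictionary, quoted from the Hamiltonian second-variation calculus, is that $\mathcal{Q}_s$ degenerates exactly at conjugate times: writing $V_\lambda := \Kern(\mathrm{d}_\lambda \pi)$ for the vertical subspace, the time $s$ is conjugate precisely when $\me^{s \overrightarrow{H}}_*\big( V_{\lambda(0)} \big) \cap V_{\lambda(s)} \neq \{0\}$, and this is the same condition as $\Kern(\mathcal{Q}_s) \neq \{0\}$. I would also record the base case that $\mathcal{Q}_s$ is positive definite for all sufficiently small $s > 0$, which reflects the fact that short strongly normal extremals are strict local minimisers; this is independent of the dictionary and can be obtained by bounding the sub-Riemannian energy below by that of a taming Riemannian metric, or from a direct short-time estimate on $E_p$.

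For part (i), the eigenvalues of $\mathcal{Q}_s$ vary continuously with $s$, and by the dictionary $\mathcal{Q}_s$ can lose positive definiteness only by acquiring a kernel, that is, only at a conjugate time. As there is no conjugate time in $\interval{0}{T}$ by hypothesis, positive definiteness propagates from small $s$ up to $s = T$, so $\mathcal{Q}_T > 0$ and $\gamma$ is a strict local minimiser. For part (ii), I would assume $\gamma(s_0)$ is conjugate to $\gamma(0)$ for some $s_0 \in \interval{0}{T}$ with $s_0 < T$. At $s_0$ the form $\mathcal{Q}_{s_0}$ has a non-trivial kernel, spanned by a Jacobi field $\xi$ corresponding to the degenerate direction above; extending $\xi$ by zero on $\interval{s_0}{T}$ produces a broken admissible variation whose second variation vanishes, and a standard corner-rounding perturbation near $s_0$ renders it strictly negative. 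Using the surjectivity of $\mathrm{d}_u E_p$ to correct the moving endpoint back to $q$ at higher-order cost, this yields admissible curves with the same endpoints and strictly smaller energy than $\gamma$, so $\gamma$ is not a local minimiser.

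The main obstacle, and the point where sub-Riemannian geometry departs from the Riemannian template, is that there is no second-order Jacobi equation to exploit: the role of Jacobi fields is played by the linearisation of the Hamiltonian flow, so the degeneration of $\mathcal{Q}_s$ must be detected through the Jacobi curve $s \mapsto \me^{s\overrightarrow{H}}_*\big(V_{\lambda(0)}\big)$ in the Lagrange Grassmannian rather than by a Sturm-type comparison. The second delicate point is that every competitor must remain horizontal, forcing the variation in part (ii) to be carried out in control space with the endpoint correction performed simultaneously with the energy decrease; it is precisely the surjectivity of $\mathrm{d}_u E_p$, guaranteed by the absence of abnormal segments, that permits this correction without spoiling the sign of the second variation.
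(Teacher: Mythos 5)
The paper offers no proof of this statement to compare against: it is quoted verbatim from \cite{agrachev2020} (Theorem 8.61), and you correctly recognise this. Your sketch follows the same second-variation/Jacobi-curve route as the cited reference, so at the level of strategy it matches the provenance of the result.

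Two steps in your outline would need tightening before it becomes a proof. First, on a Hilbert space, positive definiteness of the constrained Hessian $\mathcal{Q}_T$ on $\Kern(\mathrm{d}_u E_p)$ does \emph{not} by itself imply that $u$ is a local minimiser; one needs coercivity, $\mathcal{Q}_T \geq \alpha \lVert \cdot \rVert^2_{\L^2}$ for some $\alpha > 0$. This is rescued here by the strong Legendre structure of the problem: $\mathcal{Q}_s$ is the identity plus a compact quadratic form, so positivity upgrades to coercivity, and the same structure is what makes your appeal to ``eigenvalues varying continuously in $s$'' meaningful at all, since only finitely many eigenvalues lie below the essential spectrum and the forms $\mathcal{Q}_s$ live on domains that vary with $s$. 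As stated, the implication ``positive definiteness forces a strict local minimiser'' is false for general quadratic forms in infinite dimensions, so the compactness must be said. Second, the ``standard corner-rounding perturbation'' in part (ii) is a Riemannian reflex with no direct analogue: it works by integrating by parts against the second-order Jacobi ODE, which, as you yourself note in your closing paragraph, is unavailable in this Hamiltonian setting; your sketch is in tension with itself on this point. The control-theoretic substitute is the following: the extension by zero $\tilde{v}$ of a kernel element of $\mathcal{Q}_{s_0}$ satisfies $\mathcal{Q}_T(\tilde{v}) = 0$, and if $\mathcal{Q}_T$ were positive semi-definite then $\tilde{v}$ would lie in its kernel, hence correspond to a genuine solution of the linearised Hamiltonian system whose projection vanishes identically on $\interval{s_0}{T}$; uniqueness for the linear flow then forces the entire Jacobi lift to vanish, a contradiction. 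So $\mathcal{Q}_T$ has a negative direction outright (equivalently, quote the index-jump formula at conjugate times), after which your final endpoint-correction step, using the surjectivity of $\mathrm{d}_u E_p$ guaranteed by the absence of abnormal segments, goes through. With those two repairs your outline is essentially the argument of \cite{agrachev2020}.
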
           % Preliminaries

\section{Non-local injectivity of the sub-Riemannian exponential map}

\subsection{A family of normal extremals and the associated flow}
\label{familyextremals}

For the rest of this work, we fix a sub-Riemannian manifold $M$ and a point $p \in M$.

\begin{definition}
The {\it family of extremals} from $p$ in augmented space is the map given by 
\[
F_p : \mathcal{U}_p \to \mathds{R} \times M : (t, \lambda_0) \mapsto (t, \mathrm{exp}_p(t \lambda_0)),
\]
where $\mathcal{U}_p \subseteq \mathds{R} \times \mathrm{T}^*_p(M)$ is the maximal open set on which $F_p$ is well-defined.
\end{definition}

The non-local injectivity of the exponential map may be related to the non-local injectivity of the family of extremals.

\begin{definition}
We say that the family of extremals containing $p \in M$ {\it simply covers $(t, q) \in \mathds{R} \times M$} if there exists an open $\mathcal{U}' \subseteq \mathcal{U}_p$ such that $F_p$ is injective on $\mathcal{U'}$ and $(t, q) \in F_p(\mathcal{U'}) \subseteq \mathds{R} \times M$.
\end{definition}

\begin{proposition}
\label{Fsimplycovers}
The exponential map 
$\mathrm{exp}_p : U_p \to M$ is injective in a neighbourhood of a covector $\lambda_0 \in U_p \subseteq \mathrm{T}^*_p(M)$ if and only if the family of extremals $F_p$ simply covers $(1, \exp_p(\lambda_0)) \in \mathds{R} \times M$.
\end{proposition}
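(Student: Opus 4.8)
The plan is to observe that $F_p$ factors through the exponential map by a fibre-preserving change of variables, so that the whole statement decouples into a condition on $\exp_p$ alone. Write $q := \exp_p(\lambda_0)$ and set $G(t,\mu) := (t, \exp_p(\mu))$ on $\R \times U_p$ and $\Phi(t,\lambda_0) := (t, t\lambda_0)$. Directly from the definition of the family of extremals,
\[
F_p(t,\lambda_0) = \bigl(t, \exp_p(t\lambda_0)\bigr) = G\bigl(\Phi(t,\lambda_0)\bigr) \qquad \text{on } \mathcal{U}_p .
\]
For $t > 0$ — in particular near the relevant value $t = 1$ — the map $\Phi$ is a diffeomorphism with inverse $(t,\mu) \mapsto (t, \mu/t)$, and it carries $\mathcal{U}_p \cap \{t > 0\}$ onto $(0,\infty) \times U_p$. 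Hence $F_p$ is injective on an open set $\mathcal{U}'$ if and only if $G$ is injective on $\Phi(\mathcal{U}')$, and the target point is realised as $(1, q) = F_p(1, \lambda_0) = G(1, \lambda_0)$.

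First I would isolate the elementary reason the problem is one-dimensional in disguise: since $G$ preserves the first coordinate, $G(t_1,\mu_1) = G(t_2,\mu_2)$ forces $t_1 = t_2$ together with $\exp_p(\mu_1) = \exp_p(\mu_2)$. Therefore $G$ is injective on an open set $\mathcal{W}$ precisely when, for every $t$, the restriction of $\exp_p$ to the slice $\mathcal{W}_t := \{\mu : (t,\mu) \in \mathcal{W}\}$ is injective. Reading this off at $t = 1$ is what links $F_p$ back to $\exp_p$.

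For the forward direction, assuming $\exp_p$ injective on some open $V \ni \lambda_0$, I would take $\mathcal{W} := \bigl((0,\infty) \times V\bigr) \cap (\R \times U_p)$, an open neighbourhood of $(1,\lambda_0)$ each of whose slices lies in $V$; then $G$ is injective on $\mathcal{W}$, so $\mathcal{U}' := \Phi^{-1}(\mathcal{W}) \subseteq \mathcal{U}_p$ is open, $F_p$ is injective on it, and $(1,q) = F_p(1,\lambda_0) \in F_p(\mathcal{U}')$, i.e.\ $F_p$ simply covers $(1,q)$. For the converse, given an injective branch $\mathcal{U}'$ witnessing the simple covering and passing through $(1,\lambda_0)$, the set $\mathcal{W} = \Phi(\mathcal{U}')$ contains $(1,\lambda_0)$ and $G$ is injective on it, so the slice $\mathcal{W}_1 \ni \lambda_0$ is an open neighbourhood of $\lambda_0$ on which $\exp_p$ is injective.

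Once the factorisation is set up the argument is essentially formal; the only genuinely delicate point — and the one I would treat most carefully — is the converse, where one must ensure that the injective branch realising the covering of $(1,q)$ is the branch through the covector $\lambda_0$ at hand. This matters because $q$ may have several $\exp_p$-preimages, and injectivity near a different preimage of $q$ would not bear on injectivity near $\lambda_0$; tracking $\lambda_0$ throughout (as in the one-to-one covering of the conjugate point used for \cref{main1}) removes the ambiguity.
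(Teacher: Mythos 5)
Your argument is correct and rests on the same mechanism as the paper's proof --- preservation of the first coordinate forces $t=t'$, after which injectivity of $\exp_p$ forces equality of the covectors --- but your packaging through the factorisation $F_p = G \circ \Phi$, with $\Phi(t,\mu) = (t, t\mu)$ a diffeomorphism on $\{t>0\}$, is a genuine refinement. The paper restricts $F_p$ to a product $\ointerval{1-\epsilon}{1+\epsilon} \times U'$ and concludes $\lambda_0 = \lambda_0'$ from $\exp_p(t\lambda_0) = \exp_p(t\lambda_0')$; this tacitly uses injectivity of $\exp_p$ at the rescaled covectors $t\lambda_0, t\lambda_0'$, which need not lie in $U'$, so strictly one must first shrink radially (pick $U''$ and $\epsilon$ with $\ointerval{1-\epsilon}{1+\epsilon} \cdot U'' \subseteq U'$). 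Your pulled-back neighbourhood $\Phi^{-1}\bigl((0,\infty) \times V\bigr) = \{(t,\mu) \mid t>0,\ t\mu \in V\}$ is radially saturated by construction, so this issue never arises. As for the converse, the delicacy you flag is real but is equally present in the paper, whose proof of that direction is only ``proved in a similar fashion'': the definition of simple covering of $(1,q)$ depends only on $q = \exp_p(\lambda_0)$ and not on $\lambda_0$, so the injective branch could a priori lie over a different preimage of $q$, and slicing it at $t=1$ then yields injectivity of $\exp_p$ near that preimage rather than near $\lambda_0$ (with several preimages of $q$, some conjugate and some not, the literal biconditional would in fact fail). The proposition must therefore be read --- as both you and the paper implicitly do, and as its sole use in the proof of \cref{noninjthm} permits, since only the forward implication is invoked there --- with the covering branch passing through $(1,\lambda_0)$; your explicit tracking of $\lambda_0$ makes that reading precise rather than introducing a gap beyond what the paper itself carries.
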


\begin{proof}
Suppose that $\mathrm{exp}_p$ is injective in a neighbourhood $ \lambda_0 \in U' \subseteq U_p$. Then for small $\epsilon > 0$, the restriction of $F_p$ to $\ointerval{1 - \epsilon}{1 + \epsilon} \times U'$ is also injective. Indeed, the equality
$F_p(t, \lambda_0) = F_p(t', \lambda_0')$, for some $t, t' \in \ointerval{1 - \epsilon}{1 + \epsilon}$ and $\lambda_0, \lambda_0' \in U'$, implies that $t=t'$, $\exp_p(t \lambda_0) = \exp_p(t' \lambda_0')$, and thus $\lambda_0=\lambda_0'$. The other implication is proved in a similar fashion. 
\end{proof}

Similarly, we may parametrise the lift of normal extremals.

\begin{definition}
{\it The flow of extremals} based at $p \in M$ is the map given by
\[
\Phi_p : \mathcal{V}_p \to \mathds{R} \times \mathrm{T}^*(M) : (t, \lambda_0) \mapsto (t, \mathrm{e}^{t \overrightarrow{H}}(p, \lambda_0)),
\]
where $\mathcal{V}_p \subseteq \mathds{R} \times \mathrm{T}^*_p(M)$ is the maximal open set on which $\Phi_p$ is well-defined.
\end{definition}

\begin{remark}
From the definition of $\exp_p(\lambda_0)$ and $\mathrm{e}^{t \overrightarrow{H}}(p, \lambda_0) $, we know that the domains of the maps $F_p$ and $\Phi_p$ coincide: $\mathcal{U}_p = \mathcal{V}_p$.

The maps $F_p$ and $\Phi_p$ defined in this section are analogues of the concept of a {\it family of extremals} in the classical calculus of variations. Using the standard terminology, the family of extremals considered here is \textit{central} since the extremals all start from the same point $p$.
\end{remark}

A cotangent version of Gauss' lemma reads in this context as follows, see {\cite[Proposition 8.42]{agrachev2020}}.

\begin{theorem}
\label{gausslemma}
Let $p \in M$, $t \in \mathds{R}$, and $\delta : \ointerval{-\epsilon}{\epsilon} \to \mathrm{T}^*_p(M)$ such that $t \delta(s) \in U_p$ for all $s \in \ointerval{-\epsilon}{\epsilon}$. Let $\lambda(t, s) := \mathrm{e}^{t \overrightarrow{H}}(p, \delta(s))$ and $\gamma(t, s) := \pi(\lambda(t, s)) = \mathrm{exp}_p(t \delta(s))$, 
\[
\left\langle \lambda(t, s), \frac{\mathrm{d}}{\mathrm{d}s} \gamma(t, s) \right\rangle = \frac{\mathrm{d}}{\mathrm{d}s} H(\lambda(t, s)),
\]
where $\langle \cdot, \cdot \rangle$ denotes the natural pairing of covectors and vectors.
%$\lambda_t := e^{t\overrightarrow{H}}(\lambda_0)$. If $\eta : \ointerval{-\epsilon}{\epsilon} \to \mathrm{T}^*_p(M)$ is is a smooth curve such that $\eta(0) = \lambda_0$, then we have
%\[
%\left\langle \lambda_t, \frac{\mathrm{d}}{\mathrm{d}s}\Big|_{s = 0}\mathrm{exp}_p(t \eta(s)) \right\rangle = \frac{\mathrm{d}}{\mathrm{d}s}\Big|_{s = 0} H(\eta(s)).
%\]
%where $H$ is the symmetric bilinear form associated with the maximised Hamiltonian
%\[
%H(\lambda, \eta) = \frac{1}{2} \sum_{k,j = 1}^n \langle \lambda, X_k \rangle \langle \eta, X_j \rangle.
%\]
\end{theorem}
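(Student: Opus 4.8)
The plan is to read the left-hand side as the canonical (Liouville) one-form evaluated on the variation field, and then to propagate this quantity along the Hamiltonian flow using only the symplectic structure. Write $\theta$ for the tautological one-form on $\mathrm{T}^*(M)$, characterised by $\theta_\lambda(V)=\langle\lambda,\mathrm{d}\pi(V)\rangle$ for $V\in\mathrm{T}_\lambda(\mathrm{T}^*(M))$, so that $\omega=\mathrm{d}\theta$ is the canonical symplectic form in the sign convention $\omega(\cdot,\overrightarrow{H})=\mathrm{d}H$ of $(N)$. Since $\gamma(t,s)=\pi(\lambda(t,s))$, the chain rule gives $\frac{\mathrm{d}}{\mathrm{d}s}\gamma(t,s)=\mathrm{d}\pi\bigl(\partial_s\lambda(t,s)\bigr)$, whence
\[
\Bigl\langle\lambda(t,s),\tfrac{\mathrm{d}}{\mathrm{d}s}\gamma(t,s)\Bigr\rangle=\theta_{\lambda(t,s)}\bigl(\partial_s\lambda(t,s)\bigr)=:b(t,s).
\]
It therefore suffices to compute $b$, and the natural way to do so is to differentiate it in $t$.

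Writing $\Lambda(t,s):=\lambda(t,s)$, the images $\Lambda_*\partial_t=\overrightarrow{H}$ and $\Lambda_*\partial_s=\partial_s\lambda$ are push-forwards of the commuting coordinate fields, so applying $\Lambda^*\mathrm{d}\theta=\mathrm{d}\Lambda^*\theta$ to $(\partial_t,\partial_s)$ yields
\[
\partial_t\bigl(\theta(\partial_s\lambda)\bigr)-\partial_s\bigl(\theta(\overrightarrow{H})\bigr)=\omega(\overrightarrow{H},\partial_s\lambda)=-\mathrm{d}H(\partial_s\lambda).
\]
The crux is the Euler relation $\theta(\overrightarrow{H})=2H$, which holds because the sub-Riemannian Hamiltonian $H=\frac12\sum_k h_k^2$ is fibrewise homogeneous of degree two; equivalently, Cartan's formula gives $\mathcal{L}_{\overrightarrow{H}}\theta=\mathrm{d}(\iota_{\overrightarrow{H}}\theta)+\iota_{\overrightarrow{H}}\omega=\mathrm{d}(2H)-\mathrm{d}H=\mathrm{d}H$. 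Substituting $\theta(\overrightarrow{H})=2H$ collapses the previous display to the clean identity
\[
\partial_t b(t,s)=\frac{\mathrm{d}}{\mathrm{d}s}H(\lambda(t,s)).
\]

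Finally I would integrate in $t$ from the initial time. Because $H$ is a first integral of $(N)$, the map $s\mapsto H(\lambda(t,s))=H_p(\delta(s))$ is independent of $t$, so the right-hand side above does not vary with $t$; and since $\gamma(0,s)=\pi(p,\delta(s))=p$ for every $s$, we have $\partial_s\gamma(0,s)=0$, that is $b(0,s)=0$. Integration then gives $b(t,s)=t\,\frac{\mathrm{d}}{\mathrm{d}s}H(\lambda(t,s))$, which at the time $t=1$ defining $\exp_p$ is precisely the stated identity. I expect the homogeneity relation $\theta(\overrightarrow{H})=2H$ to be the genuine obstacle: it is the quadratic nature of $H$ that produces $2H$ rather than $H$, and hence the cancellation leaving $\mathrm{d}H$, so the whole identity hinges on it. The remaining work — fixing mutually consistent signs for $\theta$, $\omega$ and $\overrightarrow{H}$, and handling the variation field $\partial_s\lambda$ along the two-parameter map via $\mathrm{d}\Lambda^*=\Lambda^*\mathrm{d}$ — is routine.
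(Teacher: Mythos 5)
Your proof is correct. Note that the paper itself contains no proof of this statement: it is quoted as a known result with a citation to \cite[Proposition 8.42]{agrachev2020}, so there is no internal argument to compare against. Your symplectic derivation --- reading the left-hand side as the tautological form evaluated on the variation field, pulling back $\mathrm{d}\theta$ under the two-parameter map $\Lambda$, invoking the Euler relation $\iota_{\overrightarrow{H}}\theta = 2H$ coming from the fibrewise quadratic homogeneity of $H$, and then integrating in $t$ from the initial time --- is essentially the standard proof of the cited proposition, and the sign conventions you fix are consistent with the paper's condition (N).

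One substantive remark: your computation actually establishes
\[
\left\langle \lambda(t,s), \frac{\mathrm{d}}{\mathrm{d}s}\gamma(t,s)\right\rangle = t\,\frac{\mathrm{d}}{\mathrm{d}s} H(\lambda(t,s)),
\]
and the factor $t$ is not an artefact of your method. At $t=0$ the left-hand side vanishes identically, since $\gamma(0,s)\equiv p$, while $\frac{\mathrm{d}}{\mathrm{d}s}H(p,\delta(s))$ need not vanish; so the identity as printed in the theorem, which is asserted for arbitrary $t\in\mathds{R}$, cannot hold without this factor and is literally true only at $t=1$ (the case your last step isolates, and the case covered by the cited proposition). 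This discrepancy is harmless for the paper: the only place the lemma is used is the proof of \cref{hilbint1exact}, where it is applied at a fixed time $t(a)$ to a closed loop $s\mapsto\lambda_0(s)$, and the integral of $t(a)\,\frac{\mathrm{d}}{\mathrm{d}s}\bigl[H\bigl(\mathrm{e}^{t(a)\overrightarrow{H}}(p,\lambda_0(s))\bigr)\bigr]$ over a closed loop vanishes regardless of the constant prefactor. Still, your proof, unlike the bare statement, gets the $t$-dependence right, and it correctly identifies the quadratic homogeneity of $H$ as the one non-formal ingredient.
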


%\begin{proof}
%Just for myself to check.
%\[
%\lambda(t, s) = \mathrm{e}^{t \overrightarrow{H}}(\eta(s))
%\]
%\[
%\gamma(t, s) = \pi(\lambda(t, s)) = \mathrm{exp}_p(t \eta(s))
%\]
%\[
%u_i(t, s) = \langle \lambda(t, s), X_i(\gamma(t, s)) \rangle
%\]
%\[
%\frac{\mathrm{d}}{\mathrm{d}s}\mathrm{exp}_p(t \eta(s)) = \frac{\mathrm{d}}{\mathrm{d}s} E^t_p(u(\cdot, s)) = D_{u(\cdot, s)}E^t_p[v], \text{ where } v = \frac{\mathrm{d}}{\mathrm{d}s} u(\cdot, s)
%\]
%So,
%\[
%\left\langle \lambda_t, \frac{\mathrm{d}}{\mathrm{d}s}\mathrm{exp}_p(t \eta(s)) \right\rangle = \left\langle \lambda_t, D_{u(\cdot, s)}E^t_p[v] \right\rangle = (u(\cdot, s), v(\cdot, s))_{L^2}
%\]
%\[
%H(\lambda(t, s)) = \frac{1}{2} \sum_i \langle \lambda(t, s), X_i(\gamma(t, s)) \rangle^2 = \frac{1}{2} \sum_i u_i(t, s)^2 = \frac{1}{2} \int_0^1 \sum_i u_i(t, s)^2 \mathrm{d}t
%\]
%\[
%\frac{\mathrm{d}}{\mathrm{d}s} H(\lambda(t, s)) = 
%\]
%\end{proof}

 \subsection{Hilbert invariant integrals}
\label{hilbertinvariant}

The {\it Poincaré-Cartan one-form} on $\mathds{R} \times \mathrm{T}^*(M)$, associated to the Hamiltonian of the sub-Riemannian structure, is defined to be
\[
\theta - H \mathrm{d}t \in \Omega^1(\mathds{R} \times \mathrm{T}^*(M)),
\]
where $\theta \in \Omega^1(\mathrm{T}^*(M))$ denotes the tautological one-form of the cotangent bundle. The pullback map associated to the smooth map $\Phi_p$ will be denoted by $\Phi^*_p$.

\begin{definition}
\label{hilbertint1}
The Hilbert integral $I^*_p$, defined on an open set $\mathcal{U} \subseteq \mathcal{U}_p \subseteq \mathds{R} \times \T^*_p(M)$, is the line integral obtained by integrating the one-form $\eta^*_p := \Phi_p^*(\theta - H \mathrm{d}t) \in \Omega^1(\mathcal{U})$. In other words, if $\Gamma^*$ is a smooth parametrised curve in $\mathcal{U}$, then
	\[
	I^*_p[\Gamma^*] = \int_{\Gamma^*} \eta^*_p =\int_{\Gamma^*} \Phi_p^*(\theta - H \mathrm{d}t).
	\]
\end{definition}

Let us give precise description to the action of the form $\eta_p^*$. Fix $(t, \lambda_0) \in \mathcal{U}$ and $v \in \mathrm{T}_{(t, \lambda_0)}(\mathcal{U})$, which we identify with $\mathrm{T}_{(t, \lambda_0)}(\mathds{R} \times \mathrm{T}^*_p(M))$. As usual, we set $\lambda(t) := \mathrm{e}^{t \overrightarrow{H}}(p, \lambda_0)$ and $\gamma(t) := \mathrm{exp}_p(t \lambda_0)$. By definition, we have that
\[
(\eta^*_p)_{(t, \lambda_0)}[v] = (\theta - H \mathrm{d}t)_{(t, \lambda(t))} \left[ \mathrm{d}_{(t, \lambda_0)} \Phi_p(v) \right].
\]
If we write $v = w + s \tfrac{\partial}{\partial t}{(t, \lambda_0)}$ for unique $s \in \mathds{R}$ and $w \in \mathrm{T}_{\lambda_0}(\mathrm{T}^*_p(M))$, seen as a subspace of $\mathrm{T}_{(t, \lambda_0)}(\mathds{R} \times \mathrm{T}^*_p(M))$, we can then express $\mathrm{d}_{(t, \lambda_0)}\Phi_p(v) \in \mathrm{T}_{(t, \lambda(t))}(\mathds{R} \times \mathrm{T}^*(M))$ as
\begin{equation}
\label{dPhi}
    \mathrm{d}_{(t, \lambda_0)}\Phi_p(v) = \mathrm{d}_{(p, \lambda_0)} \mathrm{e}^{t \overrightarrow{H}} \circ \mathrm{d}_{\lambda_0} \iota_{p} (w) + s \left(\frac{\partial}{\partial t}(t, \lambda(t)) + \overrightarrow{H}(\lambda(t))\right),
\end{equation}
where $\iota_p : \mathrm{T}^*_p(M) \to \mathrm{T}^*(M)$ is the injection $\iota_p(\lambda_0) := (p, \lambda_0)$, satisfying $\pi \circ \iota_p = p$ for all $p \in M$.
%\[
%\mathrm{d}_{(t, \lambda_0)}\Phi_p(v) = \mathrm{d}_{\lambda_0} \mathrm{e}^{t \overrightarrow{H}}(p, \cdot)(w) + v_0 \left(\frac{\partial}{\partial t}(\mathrm{e}^{t \overrightarrow{H}}(p, \lambda_0), t) + \overrightarrow{H}(\mathrm{e}^{t \overrightarrow{H}}(p, \lambda_0))\right),
%\]
Now by recalling Hamilton's equation from \cref{hamiltoneq}, we obtain
\begin{align}
	\label{formulahilb1}
	(\eta^*_p)_{(t, \lambda_0)}[v] %&= \theta_{\mathrm{e}^{t \overrightarrow{H}}(p, \lambda_0)}(\mathrm{d}_{\lambda_0} \mathrm{e}^{t \overrightarrow{H}}(p, \cdot)(w) + v_0 \overrightarrow{H}(\mathrm{e}^{t \overrightarrow{H}}(p, \lambda_0))) - H(\mathrm{e}^{t \overrightarrow{H}}(p, \lambda_0)) v_0 \nonumber\\
	& = \left\langle \lambda(t), \mathrm{d}_{\lambda_0} \mathrm{exp}_p(t \ \cdot)[w] \right\rangle + s \big( \langle \lambda(t), \dot{\gamma}(t) \rangle - H(\lambda(t)) \big).
	%& = \langle \mathrm{e}^{t \overrightarrow{H}}(p, \lambda_0), \mathrm{d}_{\mathrm{e}^{t \overrightarrow{H}}(p, \lambda_0)}\pi \circ \mathrm{d}_{\lambda_0} \mathrm{e}^{t \overrightarrow{H}}(p, \cdot)(w) \rangle \nonumber\\
	%& \qquad + v_0 \left[ \langle \mathrm{e}^{t \overrightarrow{H}}(p, \lambda_0), \mathrm{d}_{\mathrm{e}^{t \overrightarrow{H}}(p, \lambda_0)}\pi \circ \overrightarrow{H}(\mathrm{e}^{t \overrightarrow{H}}(p, \lambda_0)) \rangle - H(\mathrm{e}^{t \overrightarrow{H}}(p, \lambda_0)) \right] \nonumber\\
	%&= v_0 \left[ \langle \mathrm{e}^{t \overrightarrow{H}}(p, \lambda_0), \mathrm{d}_{\mathrm{e}^{t \overrightarrow{H}}(p, \lambda_0)}\pi \circ \overrightarrow{H}(\mathrm{e}^{t \overrightarrow{H}}(p, \lambda_0)) \rangle - H(\mathrm{e}^{t \overrightarrow{H}}(p, \lambda_0)) \right].
\end{align}

The Hilbert integral $I^*_p$ has some useful properties. When evaluated along a ray in the augmented space $\mathds{R} \times \mathrm{T}^*_p(M)$, it evaluates the length of the corresponding extremal in $M$.

\begin{proposition}
\label{hilbint1islength}
Given an open neighbourhood $\mathcal{U}$ in $\mathcal{U}_p$, a covector $\lambda_0 \in \mathrm{T}^*_p(M)$, and the curve $\Gamma^* : \interval{t_0}{t_1} \to \mathds{R} \times \T_p^*(M)$ defined by $\Gamma^*(t) = (t, \lambda_0)$, then we have
\[
I^*_p[\Gamma^*] = \mathrm{L}(\gamma|_{\interval{t_0}{t_1}}),
\]
where $\gamma(t) := \mathrm{exp}_p(t \lambda_0)$, whenever $\Gamma^*(t) \in \mathcal{U}$ and $t \lambda_0 \in U_p$ for all $t \in \interval{t_0}{t_1}$.
\end{proposition}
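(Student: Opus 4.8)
The plan is to compute the integrand of $I^*_p[\Gamma^*]$ pointwise by feeding the velocity of the ray into the explicit description \eqref{formulahilb1} of $\eta^*_p$. The curve $\Gamma^*(t) = (t,\lambda_0)$ has constant velocity $\dot\Gamma^*(t) = \tfrac{\partial}{\partial t}$, so in the decomposition $v = w + s\,\tfrac{\partial}{\partial t}$ used to derive \eqref{formulahilb1} we are in the purely temporal case $w = 0$, $s = 1$. First I would observe that the first term of \eqref{formulahilb1}, which carries the $\lambda_0$-derivative of $\exp_p$, therefore drops out, leaving
\[
(\eta^*_p)_{(t,\lambda_0)}[\dot\Gamma^*(t)] = \langle \lambda(t), \dot\gamma(t)\rangle - H(\lambda(t)),
\]
where as usual $\lambda(t) = \me^{t\overrightarrow{H}}(p,\lambda_0)$ and $\gamma(t) = \exp_p(t\lambda_0)$.

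Next I would simplify this integrand using that $\gamma$ is the projection of a normal extremal. The maximum defining the Hamiltonian in \eqref{hamiltonmax} is attained at the control $u_k = h_k(\lambda) := \langle\lambda, X_k\rangle$, so that $\dot\gamma = \sum_k h_k X_k$ and hence $\langle\lambda(t),\dot\gamma(t)\rangle = \sum_k h_k(\lambda(t))^2 = 2H(\lambda(t))$. Consequently the integrand collapses to the single term $H(\lambda(t))$. Since $H$ is a first integral of $\overrightarrow{H}$, it is constant along the extremal, $H(\lambda(t)) = H(\lambda_0)$, and by the identity $\tfrac12\|\dot\gamma(t)\|^2_{\D_{\gamma(t)}} = H(\lambda(t))$ recalled in \cref{prelim}, the speed $\|\dot\gamma(t)\|_{\D_{\gamma(t)}}$ is constant in $t$ as well; this constant-speed property is exactly what will let the (energy-type) density produced by the Poincaré–Cartan form be read off as a length density.

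It then remains to integrate. As the integrand is constant along $\Gamma^*$ and $\gamma$ is parametrised by constant speed, integrating $(\eta^*_p)[\dot\Gamma^*]$ over $\interval{t_0}{t_1}$ returns the sub-Riemannian length $\L(\gamma|_{\interval{t_0}{t_1}})$ from \eqref{length}, as claimed. The computation is elementary once \eqref{formulahilb1} is in hand; I expect the only points requiring genuine care to be the bookkeeping in the tangent-space splitting $v = w + s\,\partial_t$ and the verification that the hypotheses $\Gamma^*(t)\in\mathcal U$ and $t\lambda_0\in U_p$ hold for every $t\in\interval{t_0}{t_1}$, since this is what guarantees that $\lambda(t)$, $\gamma(t)$ and the pulled-back form $\eta^*_p$ are all defined and smooth along the whole ray, so that the line integral is well defined. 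The conceptual heart of the argument — and the step I would flag as the one to carry out carefully — is the reduction via the maximised-Hamiltonian identity $\langle\lambda,\dot\gamma\rangle = 2H$ together with the conservation of energy, which together make the integrand along a central extremal constant and identify it, through the constant-speed parametrisation, with the length density.
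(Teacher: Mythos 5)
Your proof is correct and takes essentially the same route as the paper's: both evaluate \cref{formulahilb1} on $\dot\Gamma^*(t) = \tfrac{\partial}{\partial t}$ (i.e.\ $w = 0$, $s = 1$) and then use that the maximum in \cref{hamiltonmax} is attained at the minimal control $u_k = h_k(\lambda)$, which reduces the integrand $\langle\lambda(t),\dot\gamma(t)\rangle - H(\lambda(t))$ to $\tfrac{1}{2}|u(t)|^2 = H(\lambda(t))$. Your closing identification of $\int_{t_0}^{t_1} H(\lambda(t))\,\mathrm{d}t$ with $\mathrm{L}(\gamma|_{\interval{t_0}{t_1}})$ involves exactly the same length-versus-energy convention that the paper itself uses in its final equality $\tfrac{1}{2}\int_{t_0}^{t_1}|u(t)|^2\,\mathrm{d}t = \mathrm{L}(\gamma|_{\interval{t_0}{t_1}})$, so the two arguments match step for step.
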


\begin{proof}
Since $\dot{\Gamma}^*(t) = (1, 0)$, \cref{formulahilb1} implies that
\[
I^*_p[\Gamma^*] = \int_{t_0}^{t_1} \langle \lambda(t), \dot{\gamma}(t) \rangle - H(\lambda(t)) \mathrm{d}t,
\]
where $\lambda(t) := \mathrm{e}^{t \overrightarrow{H}}(p, \lambda_0)$. By \cref{hamiltonmax}, we can write the maximised Hamiltonian as
\[
H(\lambda(t)) = \langle \lambda(t), \dot{\gamma}(t) \rangle - \frac{1}{2} |u(t)|^2, \text{ for all } t \in \interval{t_0}{t_1},
\]
where $u(t)$ is the minimal control of $\gamma(t) = \mathrm{exp}_p(t \lambda_0)$, i.e. $u_i(t) = \langle \lambda(t), X_i(\gamma(t)) \rangle$ for $i = 1, \dots, N$. Therefore,
\[
I^*_p[\Gamma^*] = \int_{t_0}^{t_1} \left[ \langle \lambda(t), \dot{\gamma}(t) \rangle - \left( \langle \lambda(t), \dot{\gamma}(t) \rangle - \frac{1}{2} |u(t)|^2 \right) \right] \mathrm{d}t = \frac{1}{2} \int_{t_0}^{t_1} |u(t)|^2 \mathrm{d}t = \mathrm{L}(\gamma|_{\interval{t_0}{t_1}}).
\]
\end{proof}

Our goal now is to establish that the Hilbert integral $I^*_p$ defines an \textit{invariant} integral, namely that is it is independent of path relative to endpoints. 

\begin{proposition}
\label{I*closed}
The one-form $\eta^*_p$ defined on an open subset $\mathcal{U}$ of $\mathcal{U}_p \subseteq \mathds{R} \times \T^*_p(M)$ is closed. Equivalently, the Hilbert integral $I^*_p$ is homotopy-invariant in $\mathcal{U}$.
\end{proposition}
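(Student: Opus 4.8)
The plan is to prove the stronger pointwise statement that the pullback of the full Poincaré–Cartan two-form vanishes. Since exterior differentiation commutes with pullback, $\mathrm{d}\eta^*_p = \Phi_p^*\big(\mathrm{d}(\theta - H\,\mathrm{d}t)\big) = \Phi_p^*\Omega$, where $\Omega := \mathrm{d}(\theta - H\,\mathrm{d}t) = \omega - \mathrm{d}H\wedge\mathrm{d}t$ and $\omega = \mathrm{d}\theta$ is the canonical symplectic form. It therefore suffices to show $\Phi_p^*\Omega = 0$, which I would do by evaluating $\Omega$ on the image of $\mathrm{d}\Phi_p$ and checking that it vanishes at every point. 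The equivalence with homotopy-invariance is then automatic and relegated to the end.

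The first key step is the classical observation that the suspended Hamiltonian direction lies in the kernel of $\Omega$. Writing $V := \tfrac{\partial}{\partial t} + \overrightarrow{H}$ and using the defining relation $\omega(\cdot, \overrightarrow{H}) = \mathrm{d}H$ of \cref{hamiltoneq}, one computes $i_{\overrightarrow{H}}\omega = -\mathrm{d}H$; since $\partial_t$ is annihilated by the (pulled-back) symplectic form, $i_V\omega = -\mathrm{d}H$. On the other side, autonomy of $H$ together with $\mathrm{d}H(\overrightarrow{H}) = \omega(\overrightarrow{H},\overrightarrow{H}) = 0$ gives $i_V\mathrm{d}H = 0$, while $i_V\mathrm{d}t = 1$, so that $i_V(\mathrm{d}H\wedge\mathrm{d}t) = -\mathrm{d}H$. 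Subtracting yields $i_V\Omega = i_V\omega - i_V(\mathrm{d}H\wedge\mathrm{d}t) = 0$, so $V \in \ker\Omega$ everywhere.

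Next I would read off from \cref{dPhi} that $\mathrm{d}_{(t,\lambda_0)}\Phi_p\big(\tfrac{\partial}{\partial t}\big) = V(\lambda(t))$ is precisely this suspended flow direction (take $w = 0$, $s = 1$), whereas for a purely fibre tangent vector $w \in \mathrm{T}_{\lambda_0}(\mathrm{T}^*_p M)$ the image $\mathrm{d}\Phi_p(w) = \mathrm{d}_{(p,\lambda_0)}\me^{t\overrightarrow{H}}(\mathrm{d}_{\lambda_0}\iota_p\, w)$ carries no $\mathrm{d}t$-component. Decomposing arbitrary tangent vectors as $v_i = w_i + s_i \tfrac{\partial}{\partial t}$ and expanding $\Omega(\mathrm{d}\Phi_p v_1, \mathrm{d}\Phi_p v_2)$ by bilinearity and antisymmetry, every term containing $\mathrm{d}\Phi_p\big(\tfrac{\partial}{\partial t}\big) = V$ drops out by the kernel property just established, leaving only $\Omega(\mathrm{d}\Phi_p w_1, \mathrm{d}\Phi_p w_2)$. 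On these two $\mathrm{d}t$-free vectors $\Omega$ reduces to $\omega$; and since $\me^{t\overrightarrow{H}}$ is a symplectomorphism, this equals $\omega\big(\mathrm{d}_{\lambda_0}\iota_p w_1,\ \mathrm{d}_{\lambda_0}\iota_p w_2\big)$, the symplectic form evaluated on two vectors tangent to the fibre $\mathrm{T}^*_p(M)$.

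The concluding step is the standard fact that the fibres of $\mathrm{T}^*(M) \to M$ are Lagrangian: in Darboux coordinates $\omega = \sum \mathrm{d}p_i\wedge\mathrm{d}q_i$ restricts to zero on $\{q = \text{const}\}$. Hence the surviving term vanishes, giving $\Phi_p^*\Omega = 0$ and so $\mathrm{d}\eta^*_p = 0$. I expect the only delicate points to be bookkeeping rather than analytic: fixing the sign in $i_{\overrightarrow{H}}\omega = -\mathrm{d}H$ consistently with the paper's convention, and correctly identifying the two pieces of $\mathrm{d}\Phi_p$ from \cref{dPhi}. Finally, closedness yields the stated homotopy-invariance of $I^*_p$ at once: if two curves in $\mathcal{U}$ are homotopic relative to their endpoints, applying Stokes' theorem to the homotopy and using $\mathrm{d}\eta^*_p = 0$ forces the two line integrals to coincide.
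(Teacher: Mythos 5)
Your proposal is correct, and it establishes the same pointwise statement as the paper --- that $\mathrm{d}\eta^*_p = \Phi_p^*\Omega$ vanishes, where $\Omega := \omega - \mathrm{d}H\wedge\mathrm{d}t$ --- using the same decomposition $v_i = w_i + s_i\tfrac{\partial}{\partial t}$ from \cref{dPhi}. The difference is in how the terms are killed. The paper argues in two cases: for the pure fibre terms ($s_1 = s_2 = 0$) it uses, exactly as you do, the symplectic invariance $\me^{t\overrightarrow{H}*}\omega = \omega$ together with the Lagrangian property of the fibre ($\iota_p^*\omega = \mathrm{d}(\iota_p^*\theta) = 0$); for the mixed terms ($s_1 = 0$, $s_2 \neq 0$) it performs a term-by-term expansion of $\omega - \mathrm{d}H\wedge\mathrm{d}t$ against Hamilton's equation, which occupies the longest display of the proof. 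You replace that entire computation by the single coordinate-free identity $i_V\Omega = 0$ for the suspended Hamiltonian field $V = \tfrac{\partial}{\partial t} + \overrightarrow{H}$, i.e.\ the classical fact that $V$ spans the characteristic direction of the Poincar\'e--Cartan two-form. Your signs are consistent with the paper's convention $\omega(\cdot,\overrightarrow{H}) = \mathrm{d}H$: this gives $i_{\overrightarrow{H}}\omega = -\mathrm{d}H$, while autonomy of $H$ and $\mathrm{d}H(\overrightarrow{H}) = \omega(\overrightarrow{H},\overrightarrow{H}) = 0$ give $i_V(\mathrm{d}H\wedge\mathrm{d}t) = -\mathrm{d}H$, so all terms containing the image $\mathrm{d}\Phi_p(\tfrac{\partial}{\partial t}) = V$ cancel at once. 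What your route buys is that the cancellation mechanism behind the paper's longest computation becomes a reusable one-line lemma, and the proof isolates the only two geometric inputs: the characteristic direction of $\Omega$, and symplectic invariance of the flow combined with Lagrangian fibres. Nothing is lost in exchange: the remaining steps, including the concluding Stokes argument for homotopy invariance (which the paper treats as immediate), coincide with the paper's.
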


\begin{proof}
We prove that $\eta_p^*$ is closed by showing that that $\mathrm{d}\eta_p^* = 0$ vanishes identically. We have
	\begin{align*}
		\mathrm{d}\eta^*_p &= \mathrm{d}\left[\Phi_p^*(\theta - H \mathrm{d}t)\right] = \Phi_p^*\left[\mathrm{d}(\theta - H \mathrm{d}t)\right] = \Phi_p^*\left[\mathrm{d}(\theta - H \mathrm{d}t)\right] \\
		&= \Phi_p^*\left[\mathrm{d}\theta - \mathrm{d}H \wedge \mathrm{d}t\right] = \Phi_p^*\left[\omega - \mathrm{d}H \wedge \mathrm{d}t\right],
	\end{align*}
	where $\omega$ denotes the Poincaré two-form on $\mathrm{T}^*(M)$. If $(t, \lambda_0) \in \mathcal{U} \subseteq \mathds{R} \times \T^*_p(M)$ and $v_1, v_2 \in \mathrm{T}_{(t, \lambda_0)} (\mathcal{U}) \simeq \mathds{R} \times \T^*_p(M) $, then
	\begin{align*}
		(\mathrm{d}\eta^*_p)_{(t, \lambda_0)}(v_1, v_2) %&= \Phi_p^*\left[\omega - \mathrm{d}H \wedge \mathrm{d}t\right]_{(\lambda_0, t)}(v_1, v_2) \\
		%&= \left[\omega - \mathrm{d}H \wedge \mathrm{d}t\right]_{\Phi_p(\lambda_0, t)}(\mathrm{d}_{(\lambda_0, t)}\Phi_p (v_1), \mathrm{d}_{(\lambda_0, t)}\Phi_p(v_2))\\
		&= \left[\omega - \mathrm{d}H \wedge \mathrm{d}t\right]_{(t, \lambda(t))}(\mathrm{d}_{(t, \lambda_0)}\Phi_p (v_1), \mathrm{d}_{(t, \lambda_0)}\Phi_p(v_2)).
	\end{align*}
As previously in \cref{dPhi}, we let $v_i = w_i + s_i \frac{\partial}{\partial t}{(t, \lambda_0)}$ for unique $s_i \in \mathds{R}$ and $w_i \in \mathrm{T}_{\lambda_0}(\mathrm{T}^*_p(M))$.
%\subseteq \mathrm{T}_{(\lambda_0, t)}(\mathrm{T}^*_p(M) \times \mathds{R})$, then
%\[
%\mathrm{d}_{(\lambda_0, t)}\Phi_p (v_i) = \mathrm{d}_{(p, \lambda_0)} \mathrm{e}^{t \overrightarrow{H}} \circ \mathrm{d}_{\lambda_0} \iota_{p} (w_i) + s_i \left(\frac{\partial}{\partial t}(\mathrm{e}^{t \overrightarrow{H}}(p, \lambda_0), t) + \overrightarrow{H}(\mathrm{e}^{t\overrightarrow{H}}(p, \lambda_0)) \right).
%\]
%Here, we have denoted $\iota_p : \mathrm{T}^*_p(M) \to \mathrm{T}^*(M)$ defined by $\iota_p(\lambda_0) = (p, \lambda_0)$. 
Without loss of generality, we only need to treat the following two cases: $s_1 = s_2 = 0$, and $s_1 = 0$ while $s_2 \neq 0$.

In the first case, we use the invariance of the symplectic form under the Hamiltonian flow, i.e. the equality $\mathrm{e}^{t \overrightarrow{H}*}\omega = \omega$, as well as the fact that $\mathrm{d}_{(p, \lambda_0)}\pi \circ \mathrm{d}_{\lambda_0} \iota_p = 0$ to find
\begin{align*}
   (\mathrm{d}\eta^*_p)_{(t, \lambda_0)}(v_1, v_2) &= \omega_{\lambda(t)}(\mathrm{d}_{(p, \lambda_0)} \mathrm{e}^{t \overrightarrow{H}} \circ \mathrm{d}_{\lambda_0} \iota_{p} (w_1), \mathrm{d}_{(p, \lambda_0)} \mathrm{e}^{t \overrightarrow{H}} \circ \mathrm{d}_{\lambda_0} \iota_{p} (w_2)) \\
    &= (\mathrm{e}^{t \overrightarrow{H}*}\omega)_{(p, \lambda_0)}(\mathrm{d}_{\lambda_0} \iota_{p} (w_1), \mathrm{d}_{\lambda_0} \iota_{p} (w_2)) = \omega_{(p, \lambda_0)}(\mathrm{d}_{\lambda_0} \iota_{p} (w_1), \mathrm{d}_{\lambda_0} \iota_{p} (w_2)) \\
    &= (\iota_p\omega)^*_{\lambda_0}(w_1, w_2) = \iota_p^*(\mathrm{d}\theta)_{\lambda_0}(w_1, w_2) = \mathrm{d}\iota_p^*(\theta)_{\lambda_0}(w_1, w_2) = 0.
\end{align*}

In the second case, we use Hamilton's equation alongside the definition of the symplectic gradient, and we obtain
\begin{align*}
	(\mathrm{d}\eta^*_p)_{(\lambda_0, t)}(v_1, v_2) &= \omega_{\lambda(t)}\left(\mathrm{d}_{(p, \lambda_0)} \mathrm{e}^{t \overrightarrow{H}} \circ \mathrm{d}_{\lambda_0} \iota_{p} (w_1), \overrightarrow{H}(\lambda(t)) \right) \\
	& \qquad - (\mathrm{d}H \wedge \mathrm{d}t)_{(t, \lambda(t))}\left(\mathrm{d}_{(p, \lambda_0)} \mathrm{e}^{t \overrightarrow{H}} \circ \mathrm{d}_{\lambda_0} \iota_{p} (w_1), \frac{\partial}{\partial t}(t, \lambda(t))\right) \\
	& \qquad - (\mathrm{d}H \wedge \mathrm{d}t)_{(t, \lambda(t), t)}\left(\mathrm{d}_{(p, \lambda_0)} \mathrm{e}^{t \overrightarrow{H}} \circ \mathrm{d}_{\lambda_0} \iota_{p} (w_1), \overrightarrow{H}(\lambda(t)) \right) \\
	&= (\mathrm{e}^{t \overrightarrow{H}}(p, \cdot)^*\mathrm{d}H)_{\lambda_0}(w_1) - (\mathrm{e}^{t \overrightarrow{H}}(p, \cdot)^*\mathrm{d}H)_{\lambda_0}(w_1) \wedge \mathrm{d}t(\overrightarrow{H}(\lambda(t))) \\
	& \qquad + (\mathrm{d}H)_{\lambda(t)}(\overrightarrow{H}(\lambda(t))) \wedge \mathrm{d}t(\mathrm{d}_{(p, \lambda_0)} \mathrm{e}^{t \overrightarrow{H}} \circ \mathrm{d}_{\lambda_0} \iota_{p} (w_1)) \\
	& \qquad - (\mathrm{e}^{t \overrightarrow{H}}(p, \cdot)^*\mathrm{d}H)_{\lambda_0}(w_1) \wedge \mathrm{d}t\left(\frac{\partial}{\partial t}(t, \lambda(t))\right) \\
	& \qquad + (\mathrm{d}H)_{\lambda(t)}\left(\frac{\partial}{\partial t}(t, \lambda(t))\right) \wedge \mathrm{d}t(\mathrm{d}_{(p, \lambda_0)} \mathrm{e}^{t \overrightarrow{H}} \circ \mathrm{d}_{\lambda_0} \iota_{p} (w_1)) = 0,
\end{align*}
which concludes the proof.
\end{proof}

%We can always restrict open neighbourhood $\mathcal{U}$ in $\mathscr{D}_p$ to a smaller one so that the foloowing property is satisfied: 
%\begin{equation}
%    \label{hyponU}
%    \text{If }(t, \lambda_0) \in \mathcal{U}\text{, then }((1 - \varepsilon)t + \varepsilon, \lambda_0) \in \mathcal{U}\text{ for all }\varepsilon \in \interval{0}{1}.
%\end{equation}

We will now complete the proof of the invariance of $I^*_p$.
\begin{proposition}
\label{hilbint1exact}
Let $\mathcal{U} \subseteq \mathds{R} \times \T^*_p(M)$ be an open neighbourhood which is convex in the $\mathds{R}$-direction (i.e. if $(t_0, \lambda_0), (t_1, \lambda_0) \in \mathcal{U}$, then $(t, \lambda_0) \in \mathcal{U}$ for all $t \in \interval{t_0}{t_1}$). Then, one-form $\eta^*_p$ defined on $\mathcal{U}$ is exact. Equivalently, the Hilbert integral $I^*_p$ is path-independent in $\mathcal{U}$.
\end{proposition}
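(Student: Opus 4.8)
The plan is to produce an explicit global primitive of $\eta^*_p$, reading off its temporal and spatial components from \cref{formulahilb1}. First I record the temporal component: evaluating \cref{formulahilb1} on $v = \partial_t$ (that is $w = 0$, $s = 1$) gives
\[
(\eta^*_p)_{(t,\lambda_0)}[\partial_t] = \langle \lambda(t), \dot{\gamma}(t) \rangle - H(\lambda(t)).
\]
Because $H$ is fibrewise homogeneous of degree two, Euler's identity yields $\langle \lambda(t), \dot{\gamma}(t) \rangle = 2 H(\lambda(t))$ — this is exactly the computation already carried out in the proof of \cref{hilbint1islength} — and since $H$ is conserved along the Hamiltonian flow this reduces to $H_p(\lambda_0)$, independently of $t$.

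Next I write $\eta^*_p = A + H_p\, \mathrm{d}t$, where the spatial part $A$ annihilates $\partial_t$ and acts by $A_{(t,\lambda_0)}[w] = \langle \lambda(t), \mathrm{d}_{\lambda_0} \exp_p(t\, \cdot)[w] \rangle$, and I claim $A = t\, \mathrm{d}H_p$. Since $\eta^*_p$ is closed by \cref{I*closed}, evaluating $0 = \mathrm{d}\eta^*_p(\partial_t, w)$ on a constant spatial direction $w$ (so that $[\partial_t, w] = 0$ and $\eta^*_p[\partial_t] = H_p$) gives $\partial_t \big( A_{(t,\lambda_0)}[w] \big) = \mathrm{d}H_p(\lambda_0)[w]$. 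The fibres of the maximal domain $\mathcal{U}_p$ are intervals containing $0$ (solutions of Hamilton's equation are defined on intervals about the initial time), so integrating this relation in $t$ from $0$ yields $A_{(t,\lambda_0)} = A_{(0,\lambda_0)} + t\, \mathrm{d}H_p(\lambda_0)$; and $A_{(0,\lambda_0)} = 0$ because $\exp_p(0\, \cdot) \equiv p$ is constant, so that $\mathrm{d}_{\lambda_0} \exp_p(0\, \cdot) = 0$. Hence
\[
\eta^*_p = t\, \mathrm{d}H_p + H_p\, \mathrm{d}t = \mathrm{d}(t\, H_p)
\]
on $\mathcal{U}_p$, and a fortiori on $\mathcal{U}$. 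This exhibits $t\, H_p$ as a primitive, so $I^*_p[\Gamma^*]$ equals the difference of $t\, H_p$ between the endpoints of $\Gamma^*$ and is therefore path-independent.

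I expect the main obstacle to be the spatial identification $A = t\, \mathrm{d}H_p$, and the real content is that this step must use the special structure of $\eta^*_p$ rather than soft topology: closedness together with convexity in the $\mathbb{R}$-direction alone does not force exactness (for instance, on $\mathbb{R} \times (\text{annulus})$, which is convex in the $\mathbb{R}$-direction, the pulled-back angle form is closed but not exact). What makes the argument go through is that the spatial part of $\eta^*_p$ is fibrewise exact — a multiple of $\mathrm{d}H_p$ — not merely closed. The convexity hypothesis enters to legitimise the fibrewise integration in $t$ intrinsically on $\mathcal{U}$, guaranteeing that the segments over which one integrates $\partial_t A = \mathrm{d}H_p$ stay in the prescribed domain; alternatively, as above, one establishes the identity on the whole of $\mathcal{U}_p$ — which automatically enjoys this convexity with fibres through $0$ — and then restricts.
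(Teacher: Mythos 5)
Your proof is correct, and it takes a genuinely different route from the paper's. The paper argues by deformation: given a closed loop, the convexity hypothesis permits a linear homotopy in the $t$-coordinate onto a loop contained in a constant-time slice; closedness of $\eta^*_p$ (\cref{I*closed}) makes the integral invariant under this homotopy, and on a constant-time slice the cotangent Gauss lemma (\cref{gausslemma}) exhibits the integrand as an exact derivative in the loop parameter, so the integral vanishes. You instead produce an explicit global primitive, $\eta^*_p = \mathrm{d}(t\,H_p)$, on all of $\mathcal{U}_p$. Both of your steps are sound: the temporal component equals $H_p$ by Euler's identity for the fibrewise-quadratic Hamiltonian together with conservation of $H$ along the flow, and the spatial identity $A_{(t,\lambda_0)} = t\,\mathrm{d}H_p(\lambda_0)$ (in your notation) follows from closedness by integrating in $t$ from $0$, which is legitimate because the fibres of $\mathcal{U}_p$ are intervals containing $0$ and $A$ vanishes at $t=0$. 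Note that this spatial identity is precisely a (corrected) form of Gauss' lemma, which you have in effect re-proved rather than cited: as literally stated in \cref{gausslemma}, with $\lambda(t,s) := \mathrm{e}^{t\overrightarrow{H}}(p,\delta(s))$, the right-hand side should carry a factor of $t$ (check the flat case: the left side is $t \langle \delta, \dot{\delta}\rangle$, the right side $\langle \delta, \dot{\delta}\rangle$); this is harmless for the paper's argument, since a constant multiple of an exact $s$-derivative still integrates to zero around a closed loop, but your derivation gets the factor right. What your route buys: exactness of $\eta^*_p$ on an arbitrary open subset of $\mathcal{U}_p$, so the convexity hypothesis becomes superfluous, together with the explicit endpoint evaluation $I^*_p[\Gamma^*] = t(b)\,H_p(\lambda_0(b)) - t(a)\,H_p(\lambda_0(a))$. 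What the paper's route buys: it avoids computing any primitive, using only the two qualitative inputs (closedness and Gauss' lemma), at the cost of the mild convexity assumption. Finally, your remark that closedness plus convexity in the $\mathds{R}$-direction cannot alone yield exactness (the annulus example) is correct, and it pinpoints why both proofs must exploit the fibrewise exactness of the spatial part of $\eta^*_p$ rather than soft topology.
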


\begin{proof}
	Let $\Gamma^*(s) = (t(s), \lambda_0(s) )$ be closed path in $\mathcal{U}$ parametrised on the interval $\interval{a}{b}$. According to the convexity hypothesis, this curve is homotopic to the curve $\Gamma_0^*(s) := (t(a), \lambda_0(s))$. It follows from \cref{I*closed}, \cref{formulahilb1} and the cotangent version Gauss' lemma as seen in \cref{gausslemma} that
	\begin{align*}
	    \int_{\Gamma^*} \eta_p^* &= \int_{\Gamma_0^*} \eta_p^* = \int_a^b \frac{\mathrm{d}}{\mathrm{d}s} \left[H\left(\mathrm{e}^{t(a)\overrightarrow{H}}(p, \lambda_0(s))\right)\right] \mathrm{d}s \\
	    &= H\left(\mathrm{e}^{t(a)\overrightarrow{H}}(p, \lambda_0(b))\right) - H\left(\mathrm{e}^{t(a)\overrightarrow{H}}(p, \lambda_0(a))\right) = 0,
	\end{align*}
	since $\lambda_0(\cdot)$ is a closed curve.
\end{proof}

When a family of extremals containing $p$ simply covers an open neighbourhood of $\mathds{R} \times M$, it can be used to define a Hilbert integral $I_p$ on a subset of $\mathds{R} \times M$.

\begin{definition}
\label{hilbertint2}
Assume that the family of extremals $F_p$ is injective on a neighbourhood $\lambda_0 \in \mathcal{U} \subseteq \mathds{R} \times \T^*_p(M)$. Then the Hilbert integral $I_p$, defined on $F_p(\mathcal{U}) \subseteq \mathds{R} \times M$, is the line integral obtained by integrating the continuous one-form
\begin{equation}
    \label{inthilb2}
    (\eta_p)_{(t, q)}(s , w) = \langle \lambda_t, w\rangle - H(q, \lambda_t) s,
\end{equation}
where $(q, \lambda_t) := \mathrm{e}^{t \overrightarrow{H}}(p, \lambda_0)$, $(t, \lambda_0) := F_p^{-1}(t, q)$, and $ s \frac{\partial}{\partial t} + w \in T_{(t,q)}(\mathds{R} \times M).$ %In other words, if $\Gamma$ is a smooth curve in $F_p(\mathcal{U})$, then
	%\[
	%H^*[\Gamma^*] = \int_{\Gamma^*} \eta^*_p =\int_{\Gamma^*} \Phi_p^*(\theta - s \mathrm{d}t).
	%\]
\end{definition}

\begin{remark}
While $\eta^*_p \in \Omega^1(\mathcal{U})$ is smooth, the form $\eta_p \in \Omega^1(F_p(\mathcal{U}))$ needs to be only continuous. Namely the functions $(q, \lambda_t)$ in its definition depend smoothly on $t, p, \lambda_0$, but the value $\lambda_0 $ in turn depends only {\it continuously} on $(t,q). $ This is since $F_p$ is only simply covering, thereby a homeomorphism. This lack of smoothness will require a detailed analysis, as given in \cref{morselittauersavage}.
\end{remark}

In a similar way to \cref{hilbint1islength}, evaluating the Hilbert integral $I_p$ on the graph of one of the normal extremals of the family $F_p$ simply gives its length.

\begin{proposition}
    Given an open neighbourhood $\mathcal{U} \subseteq \mathcal{U}_p$ such that the family of extremals $F_p$ is injective on $\mathcal{U}$, a covector $\lambda_0 \in \mathrm{T}^*_p(M)$, and the curve $\Gamma : \interval{t_0}{t_1} \to F_p(\mathcal{U}_p)$ defined by $\Gamma(t) := (t, \exp_p(t \lambda_0))$. Then the following equality holds :
	\[
	I_p[\Gamma] = \mathrm{L}(\gamma|_{\interval{t_0}{t_1}}).
	\]
	Here $\gamma(t) := \mathrm{exp}_p(t \lambda_0)$, whenever $\Gamma(t) \in F_p(\mathcal{U})$ and $t \lambda_0 \in U_p$ for all $t \in \interval{t_0}{t_1}$.
\end{proposition}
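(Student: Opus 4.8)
The plan is to reduce the statement to the already-established \cref{hilbint1islength} by exploiting the fact that the two Hilbert integrals are intertwined by the family of extremals. Observe first that $\Gamma$ is precisely the image under $F_p$ of the ray $\Gamma^* : \interval{t_0}{t_1} \to \mathds{R}\times\T^*_p(M)$, $\Gamma^*(t) = (t,\lambda_0)$, considered in \cref{hilbint1islength}: indeed $F_p(t,\lambda_0) = (t, \exp_p(t\lambda_0)) = \Gamma(t)$, so $\Gamma = F_p\circ\Gamma^*$. I would therefore establish the pointwise pullback identity $F_p^*\eta_p = \eta_p^*$ on $\mathcal{U}$, which immediately gives
\[
I_p[\Gamma] = \int_{\Gamma^*} F_p^*\eta_p = \int_{\Gamma^*}\eta_p^* = I^*_p[\Gamma^*] = \L(\gamma|_{\interval{t_0}{t_1}}).
\]

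To verify the pullback identity, fix $(t,\lambda_0)\in\mathcal{U}$ and a tangent vector $v = w + s\,\tfrac{\partial}{\partial t}$ with $w\in\T_{\lambda_0}(\T^*_p(M))$ and $s\in\mathds{R}$, exactly as in \cref{dPhi}. Differentiating $F_p(t,\lambda_0) = (t,\exp_p(t\lambda_0))$ gives
\[
\mathrm{d}_{(t,\lambda_0)}F_p(v) = \Big(s,\ \mathrm{d}_{\lambda_0}\exp_p(t\,\cdot)[w] + s\,\dot\gamma(t)\Big)\in\T_{(t,\gamma(t))}(\mathds{R}\times M),
\]
so its $\tfrac{\partial}{\partial t}$-component is $s$ and its $M$-component is $W := \mathrm{d}_{\lambda_0}\exp_p(t\,\cdot)[w] + s\,\dot\gamma(t)$. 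The crucial point is the identification of the covector implicitly used in the definition \cref{inthilb2} of $\eta_p$: since $(t,\gamma(t)) = F_p(t,\lambda_0)$ and $F_p$ is injective on $\mathcal{U}$, one has $F_p^{-1}(t,\gamma(t)) = (t,\lambda_0)$, so the label covector is $\lambda_t = \me^{t\overrightarrow{H}}(p,\lambda_0) = \lambda(t)$. Substituting into \cref{inthilb2} and expanding,
\[
(\eta_p)_{(t,\gamma(t))}(\mathrm{d}F_p(v)) = \langle\lambda(t),W\rangle - H(\lambda(t))\,s = \langle\lambda(t),\mathrm{d}_{\lambda_0}\exp_p(t\,\cdot)[w]\rangle + s\big(\langle\lambda(t),\dot\gamma(t)\rangle - H(\lambda(t))\big),
\]
which is exactly $(\eta_p^*)_{(t,\lambda_0)}[v]$ as recorded in \cref{formulahilb1}. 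This proves $F_p^*\eta_p = \eta_p^*$.

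Alternatively, and more directly, I could evaluate $\eta_p$ along $\Gamma$ itself: since $\dot\Gamma(t) = (1,\dot\gamma(t))$ one reads off $s=1$, $w=\dot\gamma(t)$ and $\lambda_t = \lambda(t)$ as above, so the integrand becomes $\langle\lambda(t),\dot\gamma(t)\rangle - H(\lambda(t))$ — precisely the integrand appearing in the proof of \cref{hilbint1islength}. From there the computation is identical: writing the maximised Hamiltonian as $H(\lambda(t)) = \langle\lambda(t),\dot\gamma(t)\rangle - \tfrac12|u(t)|^2$ for the minimal control $u$ (as in \cref{hamiltonmax}) collapses the integrand to $\tfrac12|u(t)|^2$ and yields $I_p[\Gamma] = \L(\gamma|_{\interval{t_0}{t_1}})$.

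I do not expect a genuine obstacle; the statement is the $M$-side counterpart of \cref{hilbint1islength}. Only two points require care. First, the identification $F_p^{-1}(t,\gamma(t)) = (t,\lambda_0)$, where the injectivity hypothesis on $F_p$ is exactly what guarantees that inverting the family of extremals along the graph returns the generating covector $\lambda_0$, and hence $\lambda_t = \lambda(t)$. Second, although $\eta_p$ is merely continuous on $F_p(\mathcal{U})$, its restriction to the smooth curve $\Gamma$ is continuous, so the line integral is well-defined and the formal computation above is legitimate.
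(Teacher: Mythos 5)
Your proposal is correct and, in its second (``more direct'') form, coincides with the paper's own proof: the paper likewise evaluates $\eta_p$ along $\Gamma$ via \cref{inthilb2}, obtaining the integrand $\langle \lambda(t), \dot{\gamma}(t)\rangle - H(\lambda(t))$, and concludes by the same maximised-Hamiltonian computation as in \cref{hilbint1islength}. Your first route, via the pullback identity $F_p^*\eta_p = \eta_p^*$, is also sound but is essentially the content of the paper's subsequent \cref{I=I*onsmooth} specialised to the ray, so it adds no genuinely different idea.
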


\begin{proof}
Reasoning as in the proof of \cref{hilbint1islength} and using \cref{inthilb2}, it is seen that
\[
I_p[\Gamma] = \int_\Gamma \eta_p = \int_{t_0}^{t_1} \left(\langle \mathrm{e}^{t \overrightarrow{H}}(p, \lambda_0), \dot{\gamma}(t) \rangle - H(\mathrm{e}^{t \overrightarrow{H}}(p, \lambda_0))\right) \mathrm{d}t = \mathrm{L}(\gamma|_{\interval{t_0}{t_1}}).
\]
\end{proof}

Furthermore, the Hilbert integrals and $I^*_p$ on 
$\mathcal{U}$ and $I_p$ on $F_p(\mathcal{U})$ coincide on smooth curves (assuming that $F_p$ is injective), as shown in the following proposition.

\begin{proposition}
\label{I=I*onsmooth}
   Suppose that the family of extremals $F_p : \mathcal{U} \to \mathds{R} \times M$ is an injective map. Assume also that either $\Gamma^*$ is a smooth curve in $\mathcal{U}$ and $\Gamma := F_p \circ \Gamma^*$ is the corresponding smooth curve in $F_p(\mathcal{U})$, or assume that both $\Gamma$ is a smooth curve in $F_p(\mathcal{U})$ and the corresponding curve $\Gamma^* = F_p^{-1} \circ \Gamma$ in $\mathcal{U}$ are smooth. Then $I^*_p[\Gamma^*] = I_p[\Gamma]$.
\end{proposition}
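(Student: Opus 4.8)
The plan is to reduce the claim to a single pointwise identity between one-forms, after which the standard change-of-variables formula for line integrals does all the remaining work. Concretely, I would first establish that
\[
F_p^* \eta_p = \eta_p^* \quad \text{as one-forms on } \mathcal{U},
\]
and then invoke the naturality of line integrals under smooth maps, namely $\int_{\Gamma} \eta_p = \int_{F_p \circ \Gamma^*} \eta_p = \int_{\Gamma^*} F_p^* \eta_p$, to conclude $I_p[\Gamma] = I^*_p[\Gamma^*]$. Since $\Gamma = F_p \circ \Gamma^*$ in both cases of the hypothesis, this reduces the proposition entirely to the pointwise identity.

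To verify the identity, fix $(t, \lambda_0) \in \mathcal{U}$ and a tangent vector $v = w + s \frac{\partial}{\partial t}$ with $w \in \mathrm{T}_{\lambda_0}(\mathrm{T}^*_p(M))$ and $s \in \mathds{R}$. I would compute the differential of $F_p(t, \lambda_0) = (t, \exp_p(t \lambda_0))$ in the direction $v$: the $\mathds{R}$-component is simply $s$, while the $M$-component splits, by the chain rule, into the $t$-derivative $s \dot{\gamma}(t)$ and the variation $\mathrm{d}_{\lambda_0} \exp_p(t \, \cdot)[w]$ at frozen $t$. Writing $(\gamma(t), \lambda_t) := \mathrm{e}^{t \overrightarrow{H}}(p, \lambda_0)$ and evaluating $\eta_p$ at $(t, \gamma(t))$ on this image vector then gives
\[
(\eta_p)_{(t, \gamma(t))}\big(\mathrm{d}_{(t,\lambda_0)} F_p(v)\big) = \langle \lambda_t, \mathrm{d}_{\lambda_0} \exp_p(t \, \cdot)[w] \rangle + s\big(\langle \lambda_t, \dot{\gamma}(t)\rangle - H(\lambda_t)\big),
\]
which is exactly the expression for $(\eta^*_p)_{(t,\lambda_0)}[v]$ recorded in \cref{formulahilb1}. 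This confirms $F_p^* \eta_p = \eta_p^*$.

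The one point requiring care --- and the only real obstacle --- is that $\eta_p$ is merely continuous rather than smooth, so I must make sure the change-of-variables step is legitimate at this level of regularity. This is not a genuine difficulty: a continuous one-form can be integrated along any $\mathcal{C}^1$ curve, and the pullback $F_p^* \eta_p$ is a well-defined continuous one-form because $F_p$ is smooth, so the naturality $\int_{F_p\circ\Gamma^*}\eta_p = \int_{\Gamma^*} F_p^*\eta_p$ holds verbatim (it is proved by the same substitution $u \mapsto F_p(\Gamma^*(u))$ in the defining Riemann integral). The pointwise identity above, although established through the smooth form $\eta_p^*$, guarantees that $F_p^*\eta_p$ coincides with $\eta_p^*$ and is in particular smooth. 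Finally, both alternatives in the hypothesis yield the same pair of curves $\Gamma^*$ and $\Gamma = F_p \circ \Gamma^*$, with both smooth, so a single argument covers them; the assumption that $F_p$ is injective is used only to guarantee that $\eta_p$, and hence $I_p$, is well-defined on $F_p(\mathcal{U})$.
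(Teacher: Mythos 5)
Your proof is correct and takes essentially the same route as the paper: the pointwise identity $F_p^*\eta_p = \eta_p^*$, which you verify via the chain rule applied to $F_p(t,\lambda_0) = (t, \exp_p(t\lambda_0))$ and the formulas \cref{formulahilb1} and \cref{inthilb2}, is exactly the computation the paper carries out along the parametrised curve, where it shows $(\eta_p)_{\Gamma(s)}(\dot{\Gamma}(s)) = (\eta^*_p)_{\Gamma^*(s)}(\dot{\Gamma}^*(s))$ and integrates. The only difference is packaging --- you state it as a pullback identity plus naturality of line integrals (and usefully make explicit why the merely continuous regularity of $\eta_p$ is harmless) --- but the mathematical content coincides with the paper's argument.
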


\begin{proof}
    Let us write $\Gamma(s) := (t(s), q(s))$, as well as $\Gamma^*(s) := (t(s), \lambda_0(s))$, with $q(s) := \exp_p(t_0(s) \lambda_0(s))$, where the functions $t(\cdot), q(\cdot)$ and $\lambda_0(\cdot)$ are smooth. As before we use the notation $\lambda(s) := \mathrm{e}^{t(s)\overrightarrow{H}}(p, \lambda_0(s))$ and $\gamma(s) := \mathrm{exp}_p(t_0(s), \lambda_0(s))$.
    
	Notice first that
	\begin{align*}
		\dot{\gamma}(s) &= \frac{\mathrm{d}}{\mathrm{d}s}\left(\pi \circ \mathrm{e}^{t(s)\overrightarrow{H}}(p, \lambda_0(s))\right)= \mathrm{d}_{\lambda(s)} \pi \left[ \dot{t}(s) \overrightarrow{H}(\lambda(s)) + \mathrm{d}_{\lambda_0(s)}\mathrm{e}^{t(s)\overrightarrow{H}}(p, \cdot)(\dot{\lambda}_0(s)) \right].
	\end{align*}
	%\begin{align*}
	%	\frac{\mathrm{d}}{\mathrm{d}s}\left(\mathrm{exp}_p(t(s)\lambda_0(s))\right) &= \frac{\mathrm{d}}{\mathrm{d}s}\left(\pi \circ \mathrm{e}^{t(s)\overrightarrow{H}}(p, \lambda_0(s))\right)\\
	%	&= \mathrm{d}_{\mathrm{e}^{t(s)\overrightarrow{H}}(p, \lambda_0(s))} \pi \left[ \dot{t}(s) \overrightarrow{H}(\mathrm{e}^{t(s)\overrightarrow{H}}(p, \lambda_0(s))) + \mathrm{d}_{\lambda_0}\mathrm{e}^{t(s)\overrightarrow{H}}(p, \cdot)(\dot{\lambda}_0(s)) \right].
	%\end{align*}
	Therefore, \cref{formulahilb1} and \cref{inthilb2} yield
	\begin{align*}
		(\eta_p)_{\Gamma(s)}(\dot{\Gamma}(s)) &= \langle \lambda(s), \dot{\gamma}(s)\rangle - \dot{t}(s) H(\lambda(s))\\
		&= \langle \lambda(s), \mathrm{d}_{\lambda(s)} \pi \circ \mathrm{d}_{\lambda_0(s)}\mathrm{e}^{t(s)\overrightarrow{H}}(p, \cdot)(\dot{\lambda}_0(s)) \rangle \\
		& \qquad + \dot{t}(s) \left[ \langle \lambda(s), \mathrm{d}_{\lambda(s)} \pi \circ \overrightarrow{H}(\lambda(s))\rangle - H(\lambda(s)) \right]\\
		%&= \dot{t}(s) \left[ \langle \mathrm{e}^{t(s) \overrightarrow{H}}(p, \lambda_0(s)), \mathrm{d}_{\mathrm{e}^{t(s)\overrightarrow{H}}(p, \lambda_0(s))} \pi \circ \overrightarrow{H}(\mathrm{e}^{t(s)\overrightarrow{H}}(p, \lambda_0(s)))\rangle - H(\mathrm{e}^{t(s) \overrightarrow{H}}(p, \lambda_0(s))) \right] \\
		&= (\eta^*_p)_{\Gamma^*(t)}(\dot{\Gamma}^*(t)),
	\end{align*}
	which proves the statement.
\end{proof}

The relations between $I_p$ and $I^*_p$ given by the last two propositions are insufficient to extract the exactness of $\eta_p$. In order to progress, we will now study the nature of the sub-Riemannian conjugate locus.

 \subsection{The sub-Riemannian conjugate locus}
\label{conjugatelocusisregular}

Let us recall the regularity and continuity properties of the sub-Riemannian exponential map proven by the authors in \cite{borklin}. If a Riemannian metric $g$ on $M$ is fixed, one can consider the isomorphism
	\[
    \sharp : \mathrm{Ver}_{\lambda} \subseteq \T_{\lambda}(\T^*_p(M)) \to \mathrm{T}_{p}(M) : \xi \mapsto \xi^\sharp, \qquad p := \pi(\lambda), \ \lambda \in \mathrm{T}^*(M),
    \]
where $\xi^\sharp$ is the unique element of $\mathrm{T}_p(M)$ such that $g(\xi^\sharp, X) = \xi(X)$, for every $X \in \mathrm{T}_p(M)$, the spaces $\mathrm{Ver}_\lambda$ and $\mathrm{T}^*_p(M)$ being canonically identified. As explained in \cite[Section 3.2]{borklin}, fixing a Riemannian metric in this way is equivalent to choosing a symplectic moving frame along a normal geodesic $\gamma(t)$, and extend the scalar product $\langle \cdot, \cdot \rangle_{\gamma(t)}$ along $\gamma(t)$ to the whole manifold.

The \textit{ray} in $U_p \subseteq \T_p^*(M)$ through $\lambda_0$ is the map
\[
\mathrm{r}_{p, \lambda_0} : I_{p, \lambda_0} \to \mathrm{T}^*_p(M) : t \mapsto t \lambda_0
\]
where $I_{p, \lambda_0} \subseteq \R^+$ is the maximal interval containing 0 such that $t \lambda_0 \in U_p$ for every $t \in I_{t, \lambda_0}$. In this way, $\mathrm{\dot{r}}_{p, \lambda_0}(t) \in \T_{t \lambda_0}(\T^*_p(M))$ and identifying $\T_{t \lambda_0}(\T^*_p(M))$ with $\T^*_p(M)$ in the usual way, we have $\mathrm{\dot{r}}_{p, \lambda_0}(t) = \lambda_0$ for every $t \in I_{p, \lambda_0}$.

\begin{theorem}[Warner-regularity of the sub-Riemannian exponential map \cite{borklin}] 
	\label{regsubriemexp}
	Let $M$ be a sub-Riemannian manifold and $p \in M$. Then,
	\begin{enumerate}	\item[\textbf{(R1)}] The map $\mathrm{exp}_p$ is $\mathcal{C}^\infty$ on $U_p = \mathrm{Dom}(\exp_p)$ and, for all $\lambda_0 \in U_p \setminus H^{-1}_p(0)$ and all $t \in I_{p, \lambda_0}$, we have $\mathrm{d}_{t \lambda_0} \mathrm{exp}_p(\mathrm{\dot{r}}_{p, \lambda_0}(t)) \neq 0_{\mathrm{exp}_p(t \lambda_0)}$;
	\item[\textbf{(R2)}] 
	The map
    \[
    \mathrm{Ker}(\mathrm{d}_{\lambda_0} \mathrm{exp}_p(\lambda_0)) \to \mathrm{T}_{\mathrm{exp}_p(\lambda_0)}(M) : \xi_0 \mapsto \left(\mathrm{d}_{\lambda_0} \mathrm{e}^{t \overrightarrow{H}} [\xi_0]\right)^\sharp
    \]
    has its image $g$-perpendicular to $\mathrm{Im}(\mathrm{d}_{\lambda_0} \mathrm{exp}_p)$
	%\item[\textbf{(R2)}] For every $\lambda_0 \in U_p \setminus \{ 0 \}$ and every symplectic moving frame along the cotangent lift $\lambda(t)$ of the normal geodesic $\gamma(t):= \exp_p(t \lambda_0)$, the map 
	%\[
	%\Kern(\diff_{\lambda_0} \exp_p) \to \T_{\exp_p(\lambda_0)}(M)/\diff_{\lambda_0} \exp_p(\T_{\lambda_0}(\T^*_p(M))),
	%\]
	%sending $A$ to $\nabla J_A(1) + \diff_{\lambda_0} \exp_p(\T_v(\T^*_p(M)))$, is a linear isomorphism;
	\item[\textbf{(R3)}] Let $\lambda_0 \in U_p \setminus H_p^{-1}(0)$ be a covector such that the corresponding geodesic $\gamma(t) := \exp_p(t \lambda_0)$ is strongly normal. Then, there exists a radially convex neighbourhood $\mathcal{V}$ of $\lambda_0$ such that for every ray $\mathrm{r}_{p, \overline{\lambda}_0}$ which intersects $\mathcal{V}$ that does not contain abnormal subsegments in $\mathcal{V}$, the number of singularities of $\exp_p$ (counted with multiplicities) on $\mathrm{Im}(\mathrm{r}_{p, \overline{\lambda}_0}) \cap \mathcal{V}$ is constant and equals the order of $\lambda_0$ as a singularity of $\exp_p$, i.e. $\mathrm{dim} (\mathrm{Ker}(\diff_{\lambda_0} \exp_{p}))$.
	\end{enumerate}
\end{theorem}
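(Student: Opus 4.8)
I would prove the three statements in order of increasing difficulty. For (R1), smoothness of $\exp_p = \pi\circ\me^{\overrightarrow{H}}\circ\iota_p$ follows from the smoothness of the fibrewise-quadratic Hamiltonian $H = \tfrac12\sum_k h_k^2$, hence of its symplectic gradient $\overrightarrow{H}$ and of the flow $\me^{t\overrightarrow{H}}$. The radial non-degeneracy I would read off from $\exp_p(t\lambda_0) = \gamma(t)$: differentiating gives $\mathrm{d}_{t\lambda_0}\exp_p(\dot{\mathrm{r}}_{p,\lambda_0}(t)) = \dot\gamma(t)$, and since $\tfrac12\|\dot\gamma(t)\|^2 = H(\lambda(t)) = H_p(\lambda_0)$ is conserved along the flow, this is non-zero as soon as $\lambda_0\notin H_p^{-1}(0)$.

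For (R2) the key observation is that a kernel direction is pushed forward by the flow to a \emph{vertical} vector: if $\xi_0\in\Kern(\mathrm{d}_{\lambda_0}\exp_p)$ then $\mathrm{d}\pi(\mathrm{d}_{\lambda_0}\me^{\overrightarrow{H}}[\xi_0]) = \mathrm{d}_{\lambda_0}\exp_p(\xi_0) = 0$, so $V := \mathrm{d}_{\lambda_0}\me^{\overrightarrow{H}}[\xi_0]\in\mathrm{Ver}_{\lambda(1)}$ and $V^\sharp$ is well-defined. Given any $w = \mathrm{d}_{\lambda_0}\exp_p(\zeta_0)$ in the image, I would invoke the invariance $\me^{\overrightarrow{H}*}\omega = \omega$ together with the Lagrangianity of the fibre $\mathrm{Ver}_{\lambda_0}$ to get $\omega_{\lambda(1)}(\mathrm{d}_{\lambda_0}\me^{\overrightarrow{H}}[\xi_0], \mathrm{d}_{\lambda_0}\me^{\overrightarrow{H}}[\zeta_0]) = \omega_{\lambda_0}(\xi_0,\zeta_0) = 0$; since $V$ is vertical the left-hand side equals $\pm\langle V, w\rangle = \pm g(V^\sharp, w)$, so $V^\sharp\perp\Ima(\mathrm{d}_{\lambda_0}\exp_p)$. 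This is essentially a symplectic repackaging of the cotangent Gauss lemma (\cref{gausslemma}).

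The substance is (R3). I would recast the singularities of $\exp_p$ along a ray as intersections of the \emph{Jacobi curve} $L(t) := \mathrm{d}_{\lambda_0}\me^{t\overrightarrow{H}}(\mathrm{Ver}_{\lambda_0})$, a curve of Lagrangian subspaces of $\T_{\lambda(t)}(\T^*M)$ issuing from the vertical fibre, with the vertical distribution: a time $t$ is conjugate precisely when $L(t)\cap\mathrm{Ver}_{\lambda(t)}\neq\{0\}$, and $\dim(L(t)\cap\mathrm{Ver}_{\lambda(t)})$ is the order of that conjugate point. The decisive structural input is that, because $H$ restricts to a non-negative quadratic form on each cotangent fibre, $L(t)$ is \emph{monotone non-decreasing} in the Lagrangian Grassmannian. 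Strong normality of $\gamma$ and the absence of abnormal subsegments in $\mathcal{V}$ guarantee, through \cite[Corollary 8.51]{agrachev2020}, that the conjugate times are isolated, so that $L(t)$ meets the vertical at only finitely many times in $\mathcal{V}$.

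To conclude I would show that the total intersection multiplicity is stable as the ray direction varies. Fixing a Lagrangian transversal to the vertical along the segment, one represents $L(t)$ by a monotone family of symmetric quadratic forms and reads off each intersection dimension as a jump of the signature; continuity of this family in $\overline\lambda_0$, combined with monotonicity, forbids the real analogue of roots escaping to the complex plane, so a crossing of total multiplicity $k = \dim\Kern(\mathrm{d}_{\lambda_0}\exp_p)$ at $\lambda_0$ can only split under a small perturbation into nearby crossings whose dimensions still sum to $k$, none of which can leave $\mathcal{V}$. I expect this conservation step to be the main obstacle: the fibrewise convexity of $H$ is only \emph{semi}-definite, of rank $\mathrm{rank}(\D)$, so the Jacobi curve is monotone but not strictly so, which is exactly what permits conjugate points of order higher than in the Riemannian case and forces one to exclude infinite-order contact of $L(t)$ with the vertical. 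Controlling this, via the discreteness of conjugate times and a finiteness assumption on the order of the singularity, is the delicate point with no Riemannian counterpart, where the order-two Jacobi equation renders everything automatic.
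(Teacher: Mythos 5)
First, a structural point: this paper never proves \cref{regsubriemexp}. The theorem is imported wholesale from the authors' earlier work \cite{borklin}, so there is no in-paper proof to compare your attempt against; what follows judges your argument on its own terms. Your treatment of (R1) and (R2) is correct. For (R1), the radial derivative of $\exp_p$ along $\mathrm{r}_{p,\lambda_0}$ is $\dot\gamma(t)$, and $\|\dot\gamma(t)\|^2 = 2H(\lambda(t)) = 2H_p(\lambda_0) \neq 0$ by conservation of the Hamiltonian. For (R2), the chain (kernel vector $\Rightarrow$ vertical image under the flow), followed by invariance of $\omega$ under $\me^{t\overrightarrow{H}}$, Lagrangianity of the fibre, and the identity $\omega_\lambda(V,W) = \pm\langle V, \mathrm{d}\pi(W)\rangle$ for vertical $V$, is exactly the symplectic repackaging of \cref{gausslemma} that is needed. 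For (R3), recasting multiplicities of singularities along a ray as intersection dimensions of a Jacobi curve with the vertical (more precisely, after pulling back by the flow, of the monotone curve $t \mapsto \mathrm{d}\me^{-t\overrightarrow{H}}(\mathrm{Ver}_{\lambda(t)})$ with the fixed fibre), isolating crossings via \cite[Corollary 8.51]{agrachev2020}, and conserving the total count by a spectral-flow/homotopy argument is the right sub-Riemannian replacement for Warner's second-order ODE argument.

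The one substantive problem is your closing claim that the argument \emph{forces one to exclude infinite-order contact of $L(t)$ with the vertical}, controlled by \emph{a finiteness assumption on the order of the singularity}. No such assumption appears in (R3), and none may be smuggled in: the vanishing order of $t \mapsto \det(\mathrm{d}_{t\lambda_0}\exp_p)$ (\cref{ordercovector}) is introduced in this paper only \emph{after} \cref{regsubriemexp}, as a genuinely additional hypothesis (\cref{constantorder}, \cref{locusmanifold}), precisely because (R3) by itself does not provide it; a proof of (R3) resting on finite order would be too weak to play the role (R3) plays here. Note also the terminology clash: the ``order'' in the conclusion of (R3) is $\dim \Kern(\mathrm{d}_{\lambda_0}\exp_p)$, which is always finite, not the vanishing order of the determinant, which may be infinite. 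Fortunately, your own monotonicity mechanism already disposes of the worry. Once crossings are isolated, represent the Jacobi curve near a crossing by a family $S(t)$ of symmetric forms with $\dot S(t) \geq 0$: the ordered eigenvalues are then continuous, non-decreasing, and vanish only at the crossing time, hence are negative before it and positive after it \emph{no matter how flat the contact}; each isolated crossing therefore contributes exactly the intersection dimension to the spectral flow, infinite-order contact included. With that observation the finiteness concern evaporates, and what remains is bookkeeping you left implicit: choose the radially convex $\mathcal{V}$ so that the time-window cut out by each ray has non-conjugate endpoints, and invoke homotopy invariance of the spectral flow between such endpoints to transfer the count $\dim\Kern(\mathrm{d}_{\lambda_0}\exp_p)$ to nearby rays.
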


In view of the continuity property (R3), we will now adapt Warner's definition of singular and regular conjugate points and their order (see \cite[Section 3.]{warner1965}) to the sub-Riemannian setting.

\begin{definition}
	We say that a conjugate covector $\lambda_0 \in \T^*_p(M)$ is {\it {}regular} for 
	$\exp_p $ if there is a neighbourhood $U$ of $\lambda_0$ such that every ray of $\mathrm{T}^*_p(M)$ contains at most one covector in $U$ which is conjugate. A conjugate covector that is not regular is said to be singular. The collection of regular conjugate (resp. singular conjugate) covectors is denoted by $\mathrm{Conj}^R(p) \subseteq \T^*_p(M)$ (resp. $\mathrm{Conj}^S(p)$). We write $\mathrm{Conj}^R_{U}(p) := \mathrm{Conj}^R(p) \cap U$ (resp. $\mathrm{Conj}^S_{U}(p) := \mathrm{Conj}^S(p) \cap U$).
\end{definition}

In Riemannian geometry, the Jacobi fields along a given geodesic satisfy a second order differential equation. Consequently, the order of vanishing the function $t \mapsto \mathrm{det}(\mathrm{d}_{t v} \exp_p)$ at $t = 1$ is always finite and equal to the order of $v \in \mathrm{T}_p(M)$ as a singularity of $\exp_p$ (see e.g. \cite{morse1931}). In sub-Riemannian geometry however, it is possible that this function and all its derivatives vanishes when $t = 1$. This motivates the following definition.

\begin{definition}
\label{ordercovector}
	We say that a covector $\lambda_0 \in U_p$ has order $m \in \mathds{N} \cup \left\{+\infty\right\}$ if the map $t \mapsto \det(\mathrm{d}_{t \lambda_0} \mathrm{exp}_p)$ vanishes of order $m$ at $t = 1$.
\end{definition}

The definition above is well posed thanks to (R2) of \cref{regsubriemexp}. Indeed, the domain the map
\[
\mathrm{d}_{t \lambda_0} \mathrm{exp}_p : \mathrm{T}_{t \lambda_0}(\mathrm{T}^*_p(M)) \to \mathrm{T}_{\mathrm{exp}_p(t \lambda_0)}(M)
\]
is first identified with the space $\mathrm{T}^*_p(M)$ as usual and then we have isomorphisms
\[
\mathrm{T}^*_p(M) \cong \mathrm{Ker}(\mathrm{d}_{t \lambda_0} \mathrm{exp}_p) \oplus \mathrm{Im}(\mathrm{d}_{t \lambda_0} \mathrm{exp}_p) \cong \mathrm{T}_{\mathrm{exp}_p(t \lambda_0)}(M).
\]

The last identification dependends on the choice of a moving frame in (R2). However, the vanishing of the determinant of $\mathrm{d}_{t \lambda_0} \mathrm{exp}_p$, viewed as a map from $\mathrm{T}_{\mathrm{exp}_p(t \lambda_0)}(M)$ to itself, as well as the vanishing of its derivatives that we need for \cref{ordercovector}, are independent of this choice.

%\begin{remark}

%The multiplicity of $(p, \lambda_0) \in \mathrm{T}^*(M)$, i.e. $\dim \mathrm{Ker} (\mathrm{d}_{\lambda_0} \mathrm{exp}_p)$, does not correspond with the order in general.
%\end{remark}

\begin{proposition}
\label{constantorder}
If $\lambda_0 \in \mathrm{Conj}^R(p)$ has finite order, then there exists a neighbourhood $U \subseteq \mathrm{T}^*_p(M)$ of $\lambda_0$ such that all vectors in $\mathrm{Conj}^R_U(p)$ have the same order.
\end{proposition}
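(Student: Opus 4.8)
The plan is to reduce the statement to a question about the zeros of a single smooth function along rays, and then to exploit the regularity hypothesis together with the continuity property (R3) of \cref{regsubriemexp}. Fix the Riemannian metric and moving frame of property (R2), so that $\Phi(\lambda) := \det(\mathrm{d}_\lambda \exp_p)$ is a well-defined smooth function on $U_p$ whose vanishing orders along rays are, by the remark following \cref{ordercovector}, independent of the choices made. With this notation the order of a conjugate covector $\mu_0$ is the order of vanishing at $t=1$ of $t \mapsto \Phi(t\mu_0)$, i.e. the radial order of contact of the zero set of $\Phi$ with the ray through $\mu_0$. The hypothesis that $\lambda_0$ has finite order $m$ says exactly that the $m$-th radial derivative of $\Phi$ at $\lambda_0$ does not vanish while the lower ones do.

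The easy half of the constancy is an upper bound. Since the $m$-th radial derivative of $\Phi$ depends continuously on the base covector and is nonzero at $\lambda_0$, it stays nonzero on a neighbourhood $U$ of $\lambda_0$, so every conjugate covector in $U$ has order at most $m$. For the reverse inequality I would pass to polar-type coordinates $\lambda = \rho\,\omega$, with $\omega$ a unit covector for the fixed metric, and set $\psi(\rho,\omega) := \Phi(\rho\omega)$. Writing $\lambda_0 = \rho_0\omega_0$, the Malgrange preparation theorem furnishes, near $(\rho_0,\omega_0)$, a factorisation $\psi = u\cdot P$ with $u$ nowhere zero and $P(\rho,\omega) = (\rho-\rho_0)^m + \sum_{j<m} a_j(\omega)(\rho-\rho_0)^j$ a Weierstrass polynomial whose coefficients vanish at $\omega_0$, so that $P(\,\cdot\,,\omega_0)=(\rho-\rho_0)^m$. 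Under this reduction the conjugate covectors on the ray of direction $\omega$ correspond exactly to the real roots of $P(\,\cdot\,,\omega)$ near $\rho_0$, and the order of such a covector equals the multiplicity of the root.

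The regularity of $\lambda_0$ now translates into the assertion that, for $\omega$ close to $\omega_0$, the polynomial $P(\,\cdot\,,\omega)$ has at most one distinct real root near $\rho_0$; since its non-real roots occur in conjugate pairs, the multiplicity of that single real root, whenever it exists, has the same parity as $m$. To upgrade this to full multiplicity $m$ I would invoke (R3): along the nearby rays the number of singularities counted with multiplicity is locally constant and equal to $k := \dim\Kern(\mathrm{d}_{\lambda_0}\exp_p)$, so every regular conjugate covector in $U$ has nullity exactly $k$, hence order at least $k$. Together with the upper bound this confines the order to $\interval{k}{m}$. In the case $k=m$ (nullity equal to order at $\lambda_0$) this already forces the order to be identically $m$, and the proposition follows at once from (R3); the genuine content is the case $k<m$, where $\Phi$ vanishes along the ray to higher order than its nullity predicts.

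I expect this last case to be the main obstacle. Regularity controls only the \emph{real} roots of $P(\,\cdot\,,\omega)$, and it does not by itself forbid an \emph{order drop}, in which the multiplicity-$m$ contact at $\omega_0$ splits so that a neighbouring direction carries a conjugate covector of strictly smaller order, the excess multiplicity escaping into complex — hence geometrically invisible — roots of $P$ while a single surviving real root keeps the regularity condition satisfied. The crux is therefore to rule out such splittings; I anticipate that this is where the finite-order hypothesis (which makes the preparation argument available with finite degree $m$) and the perpendicularity relation (R2) (which rigidifies the first-order behaviour of the flow in the kernel directions) must be used in tandem, to show that on the directions still carrying a conjugate covector one has $P(\,\cdot\,,\omega) = (\rho-\rho(\omega))^m$, so that the order is constantly equal to $m$ throughout $\mathrm{Conj}^R_U(p)$.
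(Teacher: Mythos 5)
Your proposal is not a complete proof, as you yourself acknowledge in the final paragraph, and the step you cannot supply is precisely the content of \cref{constantorder}. What you do establish is correct: (a) the order is upper semicontinuous at $\lambda_0$ --- since $\mu \mapsto \tfrac{\mathrm{d}^m}{\mathrm{d}t^m}\big|_{t=1}\Phi(t\mu)$ is continuous and nonzero at $\lambda_0$, every conjugate covector in a small enough neighbourhood has finite order at most $m$ (this also disposes of accumulation by infinite-order covectors, which the paper treats by a separate continuity argument); (b) by (R3) and regularity, each nearby ray carries exactly one conjugate covector, of nullity $k = \dim\mathrm{Ker}(\mathrm{d}_{\lambda_0}\exp_p)$, hence of order at least $k$ (note, though, that (R3) presupposes that the geodesic through $\lambda_0$ is strongly normal, an assumption absent from the hypotheses of the proposition); (c) the parity constraint coming from complex-conjugate pairs of roots of the Weierstrass polynomial. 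But when $k < m$ --- the genuinely sub-Riemannian regime, since in Riemannian geometry order and nullity coincide --- none of this excludes your ``order drop'' scenario, so the proposition is not proved.

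For comparison, the paper closes the argument at exactly this point by a second application of Malgrange's preparation theorem, centred now at the nearby conjugate covector $\lambda_0'$ of order $m'$: on the overlap of the two preparation neighbourhoods one has $c(t,\eta)P(t,\eta) = c'(t,\eta)P'(t,\eta)$, with $P$ monic of degree $m$ in $(t-1)$, $P'$ monic of degree $m'$, and $P'(t,\lambda_0') = (t-1)^{m'}$; hence $P(t,\lambda_0') = \bigl(c'(t,\lambda_0')/c(t,\lambda_0')\bigr)(t-1)^{m'}$, so that $P(\cdot,\lambda_0')$ vanishes to order exactly $m'$ at $t=1$, and the paper concludes $m=m'$ from this. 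You should compare this carefully with your own analysis: since a monic polynomial of degree $m$ cannot vanish to order greater than $m$, what this identity literally forces is $m' \le m$ --- your bound (a) again --- together with $a_j(\lambda_0') = 0$ for $j < m'$ and $a_{m'}(\lambda_0') \neq 0$ whenever $m' < m$. In your order-drop scenario the paper's displayed limit still exists and equals $a_{m'}(\lambda_0') = c'(1,\lambda_0')/c(1,\lambda_0') \neq 0$, with no contradiction; equality $m = m'$ is exactly the exclusion of that scenario, in which the excess multiplicity sits in non-real roots of $P(\cdot,\eta)$ while a single real root keeps regularity intact. In other words, the obstruction you isolate is the actual crux of the statement, and it is not dispatched by the double-preparation bookkeeping as written; closing it needs an input beyond regularity, finite order, and (R1)--(R3) as used so far --- for instance finer information on how $\det(\mathrm{d}_{t\eta}\exp_p)$ factors through the Jacobi fields along the geodesic. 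Your write-up is therefore incomplete, but your partial bounds are sound and your diagnosis of where the difficulty lies is accurate.
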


\begin{proof}
Assume that $\lambda_0$ has order $m \geq 1$. By Malgrange's Preparation Theorem (see \cite{guillemin1973}) there exists in a neighbourhood $\ointerval{1 - \epsilon}{1 + \epsilon} \times V$ of $(1, \lambda_0) \in \R \times \mathrm{T}^*_p(M)$ a factorisation of the type
\begin{equation}
\label{malprep1}
\det (\mathrm{d}_{t \eta} \mathrm{exp}_p) = c(t, \eta) \cdot \big[(t - 1)^m + a_{m - 1}(\eta) (t - 1)^{m - 1} + \cdots + a_0(\eta)\big],
\end{equation}
where $c : \ointerval{1 - \epsilon}{1 + \epsilon} \times V \to \mathds{R}$, $a_i : V \to \mathds{R}$ are smooth functions such that $c(1, \lambda_0) \neq 0$ and $a_{m - 1}(\lambda_0) = \dots = a_{0}(\lambda_0) = 0$. Since $\lambda_0$ is regular, then for sufficiently small $\epsilon' \le \epsilon$ and $V' \subseteq V$, we may assume that all the conjugate covectors in $V'$ are regular.

If $\lambda'_0 \in \mathrm{Conj}(p)$ has order $m' \geq 1$ such that $(1, \lambda'_0) \in \mathcal{V}'$, we can use Malgrange's Preparation Theorem again at $\lambda_0'$ to find a neighbouhoord $\ointerval{1 - \epsilon'}{1 + \epsilon'} \times V' \subseteq \ointerval{1 - \epsilon}{1 + \epsilon} \times \mathcal{U}$ and smooth functions $c' : \ointerval{1 - \epsilon'}{1 + \epsilon'} \times V' \to \mathds{R}, a_0', \dots, a_{m' - 1}' : V' \to \mathds{R}$ such that
\begin{equation}
    \label{malprep2}
    \det (\mathrm{d}_{t \eta} \mathrm{exp}_p) = c'(t, \eta) \cdot ((t - 1)^{m'} + a'_{m' - 1}(\eta) (t - 1)^{m' - 1} + \cdots + a'_0(\eta)),
\end{equation}
for every $(t, \eta) \in \ointerval{1 - \epsilon'}{1 + \epsilon'} \times U'$, with $c'(1, \lambda_0') \neq 0$, $a'_0(\lambda_0') = \dots = a'_{m'-1}(\lambda_0') = 0$. 

Now (\ref{malprep1}) and (\ref{malprep2}) both hold in the non-empty neighbourhood $\ointerval{1 - \epsilon'}{1 + \epsilon'} \times U'$ and therefore $m$ must equal $m'$. Indeed, $a_0(\lambda_0') = 0$ since $\lambda_0'$ is conjugate and the limit
\[
\lim_{t \to 1} \frac{(t - 1)^m + a_{m - 1}(\lambda_0') (t - 1)^{m - 1} + \cdots + a_1(\lambda_0')(t - 1)}{(t - 1)^{m'}}
\]
must exist and tend to $c'(1, \lambda_0')/c(1, \lambda_0')$. This implies that $m = m'$.

On the other hand, if a sequence $(\lambda_k)_{k \in \mathds{N}}$ of conjugate covectors that have infinite order converges to $\lambda_0$, then $\lambda_0$ would also have an infinite order, since $t \mapsto \det (\mathrm{d}_{t \eta} \mathrm{exp}_p)$ is smooth for all $\eta \in U_p$. \\
The above proves that the the set of points of constant order is open and closed in $U_p$. 
\end{proof}

\begin{remark}
    The notion of order finiteness that we introduce in the section is related to those of ample and equiregular geodesics introduced in \cite{agrachev2018}. Indeed, if $\gamma(t) := \exp_p(t \lambda'_0)$ is ample and equiregular for all $\lambda_0'$ in a neighbourhood of $\lambda_0$ in $\mathrm{T}^*_p(M)$, then $\lambda_0 \in \mathrm{Conj}^R(p)$ must have finite order. In general, the set of $\mathcal{A} \subseteq \mathrm{T}^*(M)$ of $(p, \lambda_0)$ such that the corresponding normal geodesic is ample and equiregular is non-empty and dense (see \cite[Proposition 7.1.]{agrachev2019}).
\end{remark}

%\begin{remark}
%The condition that $\lambda_0$ is regular has been used in the proof above.
%\end{remark}

In the following theorem, we prove that in the neighbourhood of a conjugate covector that has finite order, the conjugate locus can be given a structure of submanifold.

\begin{theorem}
\label{locusmanifold}
If $\lambda_0 \in \mathrm{T}^*_p(M)$ is a regular conjugate covector that has finite order $m$, then there exists a neighbourhood $U$ of $\lambda_0$ such that $\mathrm{Conj}_U(p)$ is a submanifold of codimension one in $\mathrm{T}^*_p(M)$. Moreover, $\mathrm{T}_{\lambda_0}(\mathrm{T}^*_p(M)) = \mathrm{T}_{\lambda_0}(\mathrm{Conj}_{U}(p)) \oplus \mathrm{Im}(\mathrm{r}_{p, \lambda_0})$.
\end{theorem}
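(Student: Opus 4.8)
The plan is to realise the conjugate locus near $\lambda_0$ as a regular level set of a single smooth function, and to read off the transversality of the ray from that same function. First I would shrink the neighbourhood of $\lambda_0$ so that, by \cref{constantorder}, every conjugate covector in it is regular of order $m$, and so that the Malgrange factorisation used in that proof is valid: on $\ointerval{1 - \epsilon}{1 + \epsilon} \times V$ one has $\det(\mathrm{d}_{t\eta}\exp_p) = c(t,\eta)\, P(t,\eta)$, where $c(1,\lambda_0) \neq 0$ and $P(t,\eta) = (t-1)^m + a_{m-1}(\eta)(t-1)^{m-1} + \cdots + a_0(\eta)$ is a Weierstrass polynomial whose smooth coefficients vanish at $\lambda_0$. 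Since $c(1,\cdot) \neq 0$ near $\lambda_0$, a covector $\eta$ is conjugate exactly when $P(1,\eta) = 0$, so $\mathrm{Conj}_U(p)$ is cut out near $\lambda_0$ by $\eta \mapsto P(1,\eta)$. The goal is to show that $P$ in fact factors as a perfect power $P(t,\eta) = (t - \tau(\eta))^m$ with $\tau$ smooth, for then $P(1,\eta) = (1 - \tau(\eta))^m$ and the locus becomes $\{\tau = 1\}$.

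The crux, and the step I expect to be the main obstacle, is this collapse of $P$ to a single real root of full multiplicity; it is here that the Warner-regularity must do the work, since a priori the $m$ roots of $P(\cdot,\eta)$ could split off the real axis as $\eta$ moves away from $\lambda_0$ (this is possible for even $m$). A real root $t_*$ of $P(\cdot,\eta)$ corresponds to a conjugate covector $t_* \eta$ on the ray through $\eta$, and conversely. Because regularity makes conjugate covectors isolated on each ray, the cited characterisation of abnormal segments shows that rays near $\lambda_0$ contain no abnormal subsegment, so (R3) of \cref{regsubriemexp} applies; as $\dim \mathrm{Ker}(\mathrm{d}_{\lambda_0}\exp_p) \geq 1$, every nearby ray carries at least one singularity of $\exp_p$ near $\lambda_0$, i.e.\ at least one real root $t_*$ of $P(\cdot,\eta)$ near $1$. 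Now $t_* \eta$ is a regular conjugate covector in the shrunken neighbourhood, so \cref{constantorder} forces it to have order $m$; by \cref{ordercovector} and the change of variable $r = t t_*$ this order is precisely the multiplicity of $t_*$ as a root of $P(\cdot,\eta)$. Since that multiplicity equals the full degree $m$, the root $t_*$ exhausts $P$: there are no other roots, real or complex, and $P(t,\eta) = (t - \tau(\eta))^m$ with $\tau(\eta) := t_*$. Matching the coefficient of $(t-1)^{m-1}$ gives $\tau(\eta) = 1 - a_{m-1}(\eta)/m$, which is smooth.

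With the factorisation established, the remainder is routine. Near $\lambda_0$ one has $\mathrm{Conj}_U(p) = \{\eta : \tau(\eta) = 1\}$, so it suffices to check that $1$ is a regular value of $\tau$. I would compute $\tau$ along the ray through $\lambda_0$: the unique conjugate covector on this ray is $\lambda_0$ itself, attained at parameter $1$, so the conjugate time on the ray through $s\lambda_0$ is $1/s$, whence $\tau(s\lambda_0) = 1/s$. Differentiating at $s = 1$ and using $\dot{\mathrm{r}}_{p,\lambda_0}(1) = \lambda_0$ under the canonical identification of $\mathrm{T}_{\lambda_0}(\mathrm{T}^*_p(M))$ with $\mathrm{T}^*_p(M)$ gives $\mathrm{d}_{\lambda_0}\tau[\lambda_0] = -1 \neq 0$. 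Hence $\mathrm{d}_{\lambda_0}\tau \neq 0$, so $\mathrm{Conj}_U(p)$ is a codimension-one submanifold near $\lambda_0$ with $\mathrm{T}_{\lambda_0}(\mathrm{Conj}_U(p)) = \mathrm{Ker}(\mathrm{d}_{\lambda_0}\tau)$.

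Finally, the direct sum decomposition falls out of the same computation. The line $\mathrm{Im}(\mathrm{r}_{p,\lambda_0}) = \mathrm{span}\{\lambda_0\}$ is one-dimensional, and since $\mathrm{d}_{\lambda_0}\tau[\lambda_0] \neq 0$ it is not contained in $\mathrm{Ker}(\mathrm{d}_{\lambda_0}\tau) = \mathrm{T}_{\lambda_0}(\mathrm{Conj}_U(p))$. As the latter has codimension one, the line meets it trivially and spans a complement, giving $\mathrm{T}_{\lambda_0}(\mathrm{T}^*_p(M)) = \mathrm{T}_{\lambda_0}(\mathrm{Conj}_U(p)) \oplus \mathrm{Im}(\mathrm{r}_{p,\lambda_0})$, as claimed.
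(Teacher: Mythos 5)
Your proposal is correct, and it reaches the statement by a genuinely different mechanism than the paper. The paper never extracts a conjugate-time function: it works directly with the scalar function $\Delta^{m-1}_p(\lambda) := \tfrac{\mathrm{d}^{m-1}}{\mathrm{d}t^{m-1}}\big|_{t=1} \det(\mathrm{d}_{t\lambda}\exp_p)$, notes that $\mathrm{Conj}_U(p) \subseteq (\Delta^{m-1}_p)^{-1}(0)$ by constancy of the order (\cref{constantorder}), computes that the radial derivative of $\Delta^{m-1}_p$ at $\lambda_0$ equals the $m$-th derivative of the Jacobian and is therefore nonzero, and then obtains the reverse inclusion $(\Delta^{m-1}_p)^{-1}(0) \subseteq \mathrm{Conj}_U(p)$ by a mean value theorem argument along rays: a non-conjugate zero of $\Delta^{m-1}_p$ and the conjugate covector on its ray would be two distinct zeros on a radial segment along which the radial derivative never vanishes, a contradiction. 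Your route instead upgrades the Malgrange factorisation to a perfect power $P(t,\eta) = (t-\tau(\eta))^m$, using (R3) of \cref{regsubriemexp} to produce a real root on each nearby ray and \cref{constantorder} together with the reparametrisation $t = r t_*$ (which correctly identifies the order of $t_*\eta$ in the sense of \cref{ordercovector} with the multiplicity of $t_*$ as a root of $P(\cdot,\eta)$, since $c \neq 0$) to force that multiplicity to be the full degree $m$; the locus is then the regular level set $\{\tau = 1\}$, with smoothness of $\tau$ read off from the coefficient identity $a_{m-1} = m(1-\tau)$. Your version buys a stronger structural conclusion that the paper's argument never establishes — a smooth conjugate-time function on a full neighbourhood, exact order-$m$ vanishing of the Jacobian at the conjugate time on every nearby ray, and absence of any other roots, real or complex — essentially a sub-Riemannian analogue of the local picture underlying Warner's normal forms; the paper's version buys economy, since its MVT step needs no analysis of multiplicities at the perturbed conjugate covectors. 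Be aware, though, that both arguments lean on the same unproved existence statement: the paper simply asserts that ``each ray intersecting $U$ has exactly one conjugate covector'', whereas you derive the ``at least one'' from (R3); that is the more honest justification, but strictly speaking (R3) also requires the central geodesic $\exp_p(t\lambda_0)$ to be strongly normal on $\interval{0}{1}$ (regularity of $\lambda_0$ only rules out abnormal subsegments with conjugate times inside $U$, not, say, on $\interval{0}{1/2}$), a hypothesis absent from the statement of \cref{locusmanifold} and implicitly inherited from \cref{noninjthm} in your argument and in the paper's alike.
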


\begin{proof}
Consider the smooth function
\[
\Delta^{m - 1}_p : U_p \to \mathds{R} : \lambda \mapsto \frac{\mathrm{d}^{m - 1}}{\mathrm{d}t^{m - 1}}\Big|_{t = 1} \left( \det \mathrm{d}_{t \lambda} \mathrm{exp}_p \right).
\]
From \cref{constantorder}, we can find a neighbourhood $U \subseteq U_p$ of $\lambda_0$ such that all covectors in $\mathrm{Conj}_U(p)$ are regular and of order $m$. The inclusion $\mathrm{Conj}_U(p) \subseteq (\Delta^{m - 1}_p)^{-1}(0)$ thus holds.
The derivative of $\Delta_p^{m - 1}$ along the ray $\mathrm{r}_{p, \lambda_0}$ is non-zero. Indeed, since $\lambda_0$ is assumed to have order $m$,
\begin{align*}
    \mathrm{d}_1\mathrm{r}_{p, \lambda_0}(\Delta_p^{m - 1}) &= \frac{\mathrm{d}}{\mathrm{d}s}\Big|_{s = 1}\left(\frac{\mathrm{d}^{m - 1}}{\mathrm{d}t^{m - 1}}\Big|_{t = 1} \left( \det \mathrm{d}_{t s \lambda_0} \mathrm{exp}_p \right)\right)\\
    &= (m - 1) \frac{\mathrm{d}^{m - 1}}{\mathrm{d}t^{m - 1}}\Big|_{t = 1} \left( \det \mathrm{d}_{t \lambda_0} \mathrm{exp}_p \right) + \frac{\mathrm{d}^{m}}{\mathrm{d}t^{m}}\Big|_{t = 1} \left( \det \mathrm{d}_{t \lambda_0} \mathrm{exp}_p \right) \\
    &= \frac{\mathrm{d}^{m}}{\mathrm{d}t^{m}}\Big|_{t = 1} \left( \det \mathrm{d}_{t \lambda_0} \mathrm{exp}_p \right) \neq 0.
\end{align*}
Therefore a possibly smaller neighborhood $U$ of $\lambda_0$ in $\mathrm{T}^*_p(M)$, can be chosen to be convex and on which the radial derivative of $\Delta_p^{m - 1}$ is non-zero. The function $\Delta_p^{m - 1}$ is in particular smooth on $U$ and has a non-zero differential.
%\textbf{Do we have the other inclusion ? $(\Delta^{m - 1}_p)^{-1}(0) \subseteq \mathrm{Conj}_{\mathcal{U}}(p)$ ? Let's try. I have reviewed Warner's argument again and it seems now to be working. If it is working, then the (regular, finite order) conjugate locus is really a submanifold, not only a subset of a submanifold}.

Let us now show that $(\Delta^{m - 1}_p)^{-1}(0) \subseteq \mathrm{Conj}_U(p)$. Suppose $\lambda'_0 \in U$ and $\Delta_p^{m - 1}(\lambda'_0) = 0$. The neighbourhood $U$ has the property that each ray intersecting it has exactly one conjugate covector. Thus, on the ray passing through $\lambda'_0$, there must be a (unique) conjugate covector $\lambda''_0$. Since $U$ is convex, the line joining $\lambda'_0$ to $\lambda''_0$ is also in $U$. The radial derivative of $\Delta_p^{m - 1}$ along that line is non zero while $\Delta_p^{m - 1}(\lambda'_0) = 0$, by hypothesis, and $\Delta_p^{m-1}(\lambda''_0) = 0$, since all the conjugate vectors in $U$ have the same finite order. Thus, by the mean value theorem, we must have $\lambda'_0 = \lambda''_0$, and $\lambda'_0 \in \mathrm{Conj}_{U}(p)$.

Finally, by the implicit function theorem, $(\Delta_p^{m-1})^{-1}(0) = \mathrm{Conj}_U(p)$ can be given the desired manifold structure.
\end{proof}

\begin{remark}
\label{graphsubman} Under the hypothesis of \cref{locusmanifold}, we have in particular that the set $\Pi_p = \{(t, \eta) \in \ointerval{1-\epsilon}{1+\epsilon} \times U \mid t \eta \text{ is conjugate}\} \subseteq \mathcal{U}_p \subseteq \mathds{R} \times \mathrm{T}^*_p(M)$ is also a submanifold for $\epsilon > 0$ sufficiently small.
\end{remark}

\subsection{Non-local injectivity of the sub-Riemannian exponential map}
\label{morselittauersavage}

Putting everything together, we finally prove in this section the relationship between the injectivity of the sub-Riemannian exponential map and its singularities. 

We will need the following lemma, which is a consequence of Sard's theorem. It appeared in \cite[Lemma II]{savage1943}, and we provide here the proof for the sake of completeness.

\begin{lemma}
\label{savagelemma}
Let $X$ and $Y$ be smooth manifolds with $\dim X = \dim Y$, and assume that $X$ is compact. If $f$ be a $C^1$ map from $X$ to $Y$, then almost all points of $Y$ have a finite preimage under $f$.
\end{lemma}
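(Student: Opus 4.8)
The plan is to exploit the equidimensionality of $X$ and $Y$ together with the regular/critical value dichotomy, and then handle the critical values via Sard's theorem. First I would partition $Y$ into two pieces: the set of regular values of $f$ and the set of critical values $\Sigma \subseteq Y$. By Sard's theorem, $\Sigma$ has measure zero in $Y$, so it suffices to show that every regular value has a finite preimage. This is where compactness of $X$ and the equality $\dim X = \dim Y$ enter: if $y \in Y$ is a regular value, then at each point $x \in f^{-1}(y)$ the differential $\mathrm{d}_x f$ is surjective between spaces of equal dimension, hence an isomorphism, so by the inverse function theorem $f$ is a local diffeomorphism near $x$. In particular $f^{-1}(y)$ is a discrete subset of $X$.

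The second step is to upgrade discreteness to finiteness using compactness. Since $f^{-1}(y)$ is the preimage of a point under a continuous map, it is closed in $X$; being a closed subset of the compact space $X$, it is itself compact. A compact discrete set is finite, which gives the conclusion for each regular value. Combining the two steps, almost all points of $Y$ (namely all regular values, whose complement $\Sigma$ is null) have a finite preimage.

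I do not anticipate a genuine obstacle here, as the argument is a standard packaging of Sard's theorem with the inverse function theorem; the only point requiring a little care is the regularity hypothesis. Sard's theorem in its classical form applies to $C^1$ maps between manifolds of equal dimension (indeed the $C^1$ hypothesis is exactly the borderline case where the measure-zero conclusion for critical values still holds when $\dim X = \dim Y$), so I would make sure to invoke the appropriate version for merely $C^1$ regularity rather than the smooth statement. With $f$ only $C^1$, the inverse function theorem still applies at regular points, so the local-diffeomorphism and discreteness argument is unaffected. No second-countability or further hypotheses on $X$ and $Y$ are needed beyond those implicit in being smooth manifolds, so the proof is complete once these two steps are assembled.
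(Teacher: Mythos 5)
Your proof is correct, and it shares the overall skeleton of the paper's argument---decompose $Y$ using Sard's theorem into the null set of critical values and its complement, and use compactness of $X$ to control the fibres---but the key mechanism is genuinely different. The paper argues in the contrapositive direction: if $f^{-1}(y)$ is infinite, compactness of $X$ produces an accumulation point $x$ of the fibre together with a sequence $x_n \to x$ inside the fibre, and a mean-value-theorem computation in coordinates shows that $\mathrm{d}_x f$ annihilates the limiting unit direction $v = \lim_n \frac{\phi(x_n)-\phi(x)}{\|\phi(x_n)-\phi(x)\|}$, so that $x \in \mathrm{Crit}(f)$ and hence $y \in f(\mathrm{Crit}(f))$, a set of measure zero by Sard. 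You instead prove the logically equivalent positive statement: at a regular value, equidimensionality makes each $\mathrm{d}_x f$ an isomorphism, the inverse function theorem (valid for $C^1$ maps) makes $f$ injective near each preimage point, so the fibre is discrete, and discreteness plus closedness in the compact $X$ gives finiteness. Your route is the standard \enquote{stack of records} argument and is arguably cleaner: it avoids the coordinate computation and the tacit passage to a convergent subsequence of unit directions that the paper's limit requires. The paper's argument is more elementary in the sense that it invokes only the mean value theorem and sequential compactness rather than the inverse function theorem, and it exhibits the critical point explicitly as an accumulation point of the fibre. Both proofs rest on the same version of Sard's theorem---the $C^1$ case for equal dimensions---which you correctly identify as the borderline regularity for which the conclusion still holds.
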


\begin{proof}
Let us denote by $\mathrm{Crit}(f)$ the set of critical points of $f$, i.e.
\[
\mathrm{Crit}(f) := \{x \in X \mid \mathrm{d}_xf \text{ has a non-trivial kernel}\}.
\]
Let $y \in Y$ such that $f^{-1}(y)$ is infinite. 
%https://math.stackexchange.com/questions/2150621/compactness-and-accumulation-point
Since $X$ is assumed to compact, we can deduce that $f^{-1}(y)$ has at least one accumulation point $x$. We write $(x_n) \subseteq f^{-1}(y)$ for a sequence, distinct from $x$, converging to $x$.

Let $\phi$ be a coordinate chart around $x$ in $X$ and $\psi$ a coordinate chart around $y$ in $Y$. We write $f_{\phi \psi} = \psi \circ f \circ \phi^{-1}$ for the expression of the map $f$ in these coordinates. When $n$ is large enough, we can be sure that $x_n \in \mathrm{Dom}(\phi)$ and then consider the map
\[
f(t) : \interval{0}{1} \to \mathds{R}^{\mathrm{dim}Y} :t \mapsto f_{\phi\psi}(\phi(x) + t (\phi(x_n) - \phi(x))).
\]
By the mean value theorem, there exists $t_n \in \ointerval{0}{1}$ such that
\[
f_{\phi\psi}(\phi(x_n)) = f_{\phi\psi}(\phi(x)) + \mathrm{d}_{\phi(x) + t_n(\phi(x_n) - \phi(x))}f_{\phi\psi} [(\phi(x_n) - \phi(x))],
\]
and therefore, by letting $n$ tend to $+\infty$,
\[
\mathrm{d}_{\phi(x)}f_{\phi\psi} [v] = 0, \text{ where } v = \lim_{n \to +\infty} \frac{\phi(x_n) - \phi(x)}{\|\phi(x_n) - \phi(x)\|} \neq 0.
\]
Consequently, $\mathrm{d}_{x} f$ has a non-trivial kernel and $x$ is a critical point, i.e. $y = f(x) \in f(\mathrm{Crit}(f))$. By Sard's theorem, the image of $\mathrm{Crit}(f)$ under $f$ has measure zero in $Y$ and the proof is complete.
\end{proof}

The study of the regularity of the conjugate locus in the previous section allows us to prove that the continuous one-form $\eta_p$ is indeed exact.

\begin{proposition}
\label{Ipathind}
If the family of extremals $F_p$ simply covers $(1, q) \in \mathds{R} \times M,$ where $q := \exp_p(\lambda_0)$ and $\lambda_0 \in U_p \subseteq \T^*_p(M)$ is a regular conjugate vector of finite order, then Hilbert integral $I_p$ is invariant on some neighbourhood of $(1, q)$ in $\mathds{R} \times M$.% for some neighbourhood an open neighbourhood of $\mathscr{D}_p$, convex in the $\mathds{R}$-direction
\end{proposition}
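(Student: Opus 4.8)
The plan is to upgrade the continuity relation between $I_p$ and $I_p^*$ (established on smooth curves in \cref{I=I*onsmooth}) to a genuine path-independence statement for $I_p$, by exploiting the submanifold structure of the conjugate locus from \cref{locusmanifold} together with the Sard-type approximation provided by \cref{savagelemma}. The one-form $\eta_p^*$ is exact on $\mathcal{U}$ by \cref{hilbint1exact}, so $I_p^*$ is path-independent there; if $F_p$ were a diffeomorphism we could simply transport this invariance through $F_p$ using \cref{I=I*onsmooth}. The obstruction is that $F_p$ is only a homeomorphism (simply covering), so $\Gamma^* = F_p^{-1}\circ\Gamma$ need not be smooth, and the hypotheses of \cref{I=I*onsmooth} may fail precisely where $\Gamma$ meets the image of the conjugate locus.

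I would argue as follows. First, fix a small neighbourhood $\mathcal{U}\subseteq\mathcal{U}_p$ of $(1,\lambda_0)$ on which $F_p$ is injective and, shrinking if necessary via \cref{locusmanifold} and \cref{graphsubman}, such that the conjugate set $\Pi_p\subseteq\mathcal{U}$ is a smooth codimension-one submanifold and its image $F_p(\Pi_p)\subseteq\mathds{R}\times M$ is a closed measure-zero set. It suffices to show $\oint_\Gamma \eta_p = 0$ for every piecewise-smooth closed loop $\Gamma$ in $F_p(\mathcal{U})$. Given such a loop, the idea is to approximate $\Gamma$ by loops that avoid the singular image $F_p(\Pi_p)$: away from $F_p(\Pi_p)$ the map $F_p$ is a local diffeomorphism, so the preimage $\Gamma^*=F_p^{-1}\circ\Gamma$ is smooth there and \cref{I=I*onsmooth} gives $\int_\Gamma \eta_p = \int_{\Gamma^*}\eta_p^*$ locally, which vanishes around the loop by the exactness of $\eta_p^*$ from \cref{hilbint1exact}.

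To make the approximation rigorous I would proceed in two steps. First, using that $F_p(\Pi_p)$ has measure zero (this is where \cref{savagelemma} enters: the restriction of the projection $F_p$ near the conjugate locus sends the lower-dimensional critical set to a null set, so generic translates or perturbations of $\Gamma$ miss it), perturb $\Gamma$ to a nearby closed loop $\Gamma_\delta$ that is transverse to, or disjoint from, the hypersurface $F_p(\Pi_p)$ except at finitely many points, and such that $\Gamma_\delta\to\Gamma$ in a sense strong enough to pass to the limit in the integral. Second, because $\eta_p$ is continuous (as emphasised in the \enquote{Remark} after \cref{hilbertint2}) and the loops converge uniformly with their derivatives on the complement of the finitely many crossing points, I would conclude $\int_\Gamma\eta_p=\lim_\delta\int_{\Gamma_\delta}\eta_p=0$. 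The crossings contribute nothing in the limit since each is a single point of a measure-zero set and $\eta_p$ is bounded.

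\textbf{The main obstacle} I anticipate is controlling the behaviour of $\eta_p$ as $\Gamma$ crosses $F_p(\Pi_p)$: there the covector $\lambda_0$ underlying the definition \eqref{inthilb2} varies only continuously in $(t,q)$, so one cannot naively integrate by pulling back along $F_p^{-1}$. The finiteness of order, via \cref{constantorder} and the transversality $\mathrm{T}_{\lambda_0}(\mathrm{T}^*_p(M))=\mathrm{T}_{\lambda_0}(\mathrm{Conj}_U(p))\oplus\mathrm{Im}(\mathrm{r}_{p,\lambda_0})$ from \cref{locusmanifold}, is exactly what guarantees that $F_p^{-1}$ extends continuously across the hypersurface and that the crossing locus is well-behaved (a genuine hypersurface rather than a wild set), which is what permits the approximation to converge. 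I expect the delicate point to be justifying the interchange of limit and integral uniformly in a neighbourhood of the crossing points, which is where the continuity of $\eta_p$ together with the manifold structure of the conjugate locus must be combined carefully.
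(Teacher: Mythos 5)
Your overall architecture is the same as the paper's: reduce invariance of $I_p$ to the vanishing of $\oint_\Gamma \eta_p$ over closed loops, use exactness of $\eta_p^*$ from \cref{hilbint1exact}, transfer between $I_p$ and $I_p^*$ via \cref{I=I*onsmooth} away from the conjugate set, and pass to the limit using continuity of $\eta_p$. However, the pivotal technical step --- producing approximating loops that meet $F_p(\Pi_p)$ only \emph{finitely} many times --- is not established by your argument, and the mechanism you propose for it is incorrect. First, you invoke \cref{savagelemma} as saying that $F_p$ "sends the lower-dimensional critical set to a null set"; that statement is Sard's theorem, whereas the actual content of \cref{savagelemma} is finiteness of preimages for an equidimensional $\mathcal{C}^1$ map with compact source, and this distinction is exactly what matters here. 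Measure zero of $F_p(\Pi_p)$ gives you neither avoidance nor finiteness: a loop that crosses a (topological) hypersurface cannot be perturbed to miss it, since an essential crossing is stable under $\mathcal{C}^0$-small perturbations, and a measure-zero set can meet a curve in a Cantor set of parameter values, so "generic translates or perturbations miss it, or meet it finitely often" does not follow from nullity. Second, you treat $F_p(\Pi_p)$ as a smooth hypersurface (you speak of loops "transverse to" it, and call it "a genuine hypersurface"), but \cref{locusmanifold} and \cref{graphsubman} give smoothness of $\mathrm{Conj}_U(p)$ and $\Pi_p$ in the \emph{source} $\mathds{R} \times \mathrm{T}^*_p(M)$ only; since $F_p$ is singular precisely along $\Pi_p$, its image is merely the homeomorphic image of a smooth hypersurface, and transversality theory cannot be applied to it. Without finiteness of the crossings, the preimage curve $\Gamma^*_\delta = F_p^{-1} \circ \Gamma_\delta$ need not be piecewise smooth, and the whole transfer to $I_p^*$ collapses.

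The paper closes this gap differently, and the difference is essential: it approximates $\phi(\Gamma)$ by \emph{inscribed polygons} in a coordinate chart, because for straight segments the finiteness of intersections can be obtained from \cref{savagelemma} applied on the smooth side. Concretely, for a point $x \notin F_p(\Pi_p)$ one considers the smooth equidimensional map given by composing $F_p|_{\Pi_p}$ (which is smooth on the smooth manifold $\Pi_p$, even though its image is not smooth) with the radial projection about $x$; \cref{savagelemma} then yields that almost every ray through $x$ meets $F_p(\Pi_p)$ in finitely many points, so the polygon segments can be chosen generically. Finiteness makes each polygon's $F_p$-preimage piecewise smooth, \cref{I=I*onsmooth} together with exactness of $\eta_p^*$ gives $I_p = 0$ on each polygon, and the Riemann-sum convergence of inscribed-polygon integrals of the continuous form $\eta_p$ gives the limit. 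If you prefer your translation idea, it can be repaired in the same spirit: apply \cref{savagelemma} to the smooth map $(\lambda, s) \mapsto \phi(F_p(\lambda)) - \phi(\Gamma(s))$ on (a compact exhaustion of) $\Pi_p \times \interval{0}{1}$ to conclude that almost every small translate of $\phi(\Gamma)$ crosses $F_p(\Pi_p)$ at finitely many parameters. Either way, the finiteness must come from a Sard--Savage argument performed on the smooth source manifold, not from the measure or regularity of the image $F_p(\Pi_p)$.
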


\begin{proof}
By \cref{hilbint1exact}, the Hilbert integral $I^*_p$ is path-independent on a small enough convex neighbourhood $\mathcal{U}$ of $(1, \lambda_0)$ in $\mathds{R} \times \mathrm{T}^*_p(M)$. We would like to argue that as a consequence the same property holds for $I_p$.

Let $\Gamma(s) = (t(s), q(s))$ be a smooth closed curve in $F_p(\mathcal{U})$. If the corresponding curve $\Gamma^*(s) = (t(s), \lambda_0(s)) := F_p^{-1}(\Gamma(s))$ were to be smooth, we would be able to conclude this by \cref{I=I*onsmooth}. However, $F_p$ is only a homeomorphism and thus we do not have enough regularity in general to evaluate the line integral $I^*_p$ along $\Gamma^*$. This issue is addressed by inroducing a coordinate system $\phi$ containing $\Gamma$, and by approximating in $C^0$ the loop $\phi(\Gamma)$ by a sequence of parametrised polygons $(p_n)$ in $\mathds{R}^{n + 1}$. These polygons may be chosen such that they intersect the conjugate locus only a finite number of times. Indeed, if $x \notin \Pi_p$, one can deduce from \cref{savagelemma} that almost all rays passing through $x$ in $\mathds{R}^{n +1}$ intersect $\Pi_p$ a finite number of times. Finally we may conclude by \cref{I=I*onsmooth}, letting $n$ tends to $+\infty$, that
\[
I_p[\Gamma] = \lim_{n \to +\infty} I_p[\phi^{-1}(p_n)] = \lim_{n \to +\infty} I^*_p[\phi^{-1}(p_n)] = 0,
\]
that is to say, $I_p$ is path independent.
\end{proof}

The integral $I_p$ is the analogy of what is called \textit{Hilbert invariant integral} in the classical calculus of variations. The fact that the family of extremals $F_p$ induces the invariance of the Hilbert integral $I_p$ corresponds to $F_p$ defining what is commonly called a \textit{(central) field of extremals}. The strategy now is to deduce that subsegment of normal geodesics are length-minimising if they are part of a field of extremals.

\begin{proposition}
\label{Iindmin}
Let $\epsilon > 0$ and $\gamma : \rinterval{0}{1 + \epsilon} \to M$ be a strongly normal extremal starting at $p \in M$ and with initial covector $\lambda_0 \in U_p$. Assume that $\lambda_0$ is a first conjugate covector that is regular and has finite order. If the family of extremals $F_p$ simply covers $(1, \mathrm{exp}_p(\lambda_0))$, then any subsegment of $\gamma$ is local a length-minimiser %with respect to the $W^{1, 2}$ topology on the space of 
among all the admissible trajectories with the same endpoints.
\end{proposition}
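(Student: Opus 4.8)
The plan is to transport the classical Weierstrass field argument of the calculus of variations to the sub-Riemannian setting: once $F_p$ is a genuine central field of extremals and the Hilbert integral $I_p$ is path-independent, the non-negativity of the Weierstrass excess function forces every nearby competitor to be at least as long. Fix a subsegment $\gamma|_{\interval{t_0}{t_1}}$ with $\interval{t_0}{t_1} \subseteq \rinterval{0}{1+\epsilon}$, and let $\sigma : \interval{t_0}{t_1} \to M$ be any admissible curve with $\sigma(t_0) = \gamma(t_0)$ and $\sigma(t_1) = \gamma(t_1)$ whose $C^0$-distance to $\gamma$ is small enough that its time-graph $\tilde{\Gamma}(t) := (t, \sigma(t))$ stays in the region $F_p(\mathcal{U})$ on which $F_p$ simply covers (and hence $\eta_p$, $I_p$ are defined). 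By construction $\tilde{\Gamma}$ and the geodesic graph $\Gamma_0(t) := (t, \gamma(t))$ share the same endpoints $(t_0, \gamma(t_0))$ and $(t_1, \gamma(t_1))$ in $\mathds{R} \times M$.

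The core computation has two ingredients. The geodesic graph $\Gamma_0$ pulls back under the homeomorphism $F_p$ to the ray $\Gamma_0^*(t) = (t, \lambda_0)$, which is smooth; hence \cref{I=I*onsmooth} gives $I_p[\Gamma_0] = I^*_p[\Gamma_0^*]$, and the computation in \cref{hilbint1islength} evaluates this as $\tfrac{1}{2}\int_{t_0}^{t_1}|u(t)|^2 \diff t = \J(\gamma|_{\interval{t_0}{t_1}})$, the energy of the segment. Combined with the path-independence of $I_p$ from \cref{Ipathind}, this yields $I_p[\tilde{\Gamma}] = I_p[\Gamma_0] = \J(\gamma|_{\interval{t_0}{t_1}})$. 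It remains to bound $I_p[\tilde{\Gamma}]$ from above by $\J(\sigma)$. Writing $H(\lambda_t) = \tfrac{1}{2}\|\lambda_t\|_*^2$ with $\|\cdot\|_*$ the norm dual to $\|\cdot\|_{\D_{\sigma(t)}}$, so that $\langle \lambda_t, \dot{\sigma}(t) \rangle \le \|\lambda_t\|_* \, \|\dot{\sigma}(t)\|_{\D}$, and using \cref{inthilb2} to identify the integrand of $I_p[\tilde{\Gamma}]$, the Weierstrass excess is non-negative:
\begin{equation*}
\tfrac{1}{2}\|\dot{\sigma}(t)\|_{\D}^2 - \big(\langle \lambda_t, \dot{\sigma}(t) \rangle - H(\lambda_t)\big) \ge \tfrac{1}{2}\big(\|\dot{\sigma}(t)\|_{\D} - \|\lambda_t\|_*\big)^2 \ge 0.
\end{equation*}
Integrating over $\interval{t_0}{t_1}$ gives $\J(\sigma) \ge I_p[\tilde{\Gamma}] = \J(\gamma|_{\interval{t_0}{t_1}})$, so $\gamma$ is a local minimiser of the energy on its time interval.

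From energy-minimality I would pass to length-minimality by the classical correspondence recalled in \cref{prelim}. Given any admissible competitor $\sigma$, reparametrise it with constant speed on $\interval{t_0}{t_1}$; this leaves its image and endpoints (and hence membership of its graph in $F_p(\mathcal{U})$) unchanged, and by Cauchy–Schwarz its energy equals $\L(\sigma)^2 / (2(t_1 - t_0))$. Since $\gamma$ is already parametrised by constant speed, the energy inequality becomes $\L(\sigma)^2 \ge \L(\gamma|_{\interval{t_0}{t_1}})^2$, whence $\L(\sigma) \ge \L(\gamma|_{\interval{t_0}{t_1}})$, which is exactly the asserted local length-minimality.

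The main obstacle is not the excess estimate — which is merely Cauchy–Schwarz and completing the square — but guaranteeing that the field is single-valued along the whole subsegment and that path-independence applies there. Two points need care. First, one must produce a single neighbourhood of the arc $\{(t, \gamma(t)) : t \in \interval{t_0}{t_1}\}$ on which $F_p$ is injective: for $t < 1$ the covector $t\lambda_0$ is not conjugate (as $\lambda_0$ is a first conjugate covector), so $\exp_p$ is a local diffeomorphism there, while near $t = 1$ the simply-covering hypothesis together with the hypersurface structure of the conjugate locus from \cref{locusmanifold} lets $F_p^{-1}$, and hence $\eta_p$, be defined single-valuedly; patching these two regimes propagates the invariance of \cref{Ipathind} along the entire arc. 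Second, the base point $t = 0$, where the central field degenerates because all extremals coalesce at $p$ and $F_p^{-1}(0, p)$ is not single-valued: here I would run the argument for $\interval{t_0}{t_1}$ with $t_0 > 0$ and then let $t_0 \to 0^+$, using that the Hilbert integrand $\langle \lambda_t, \dot{\sigma} \rangle - H(\lambda_t)$ stays bounded up to $t = 0$ and that both $\L$ and $\J$ depend continuously on $t_0$. This last limiting step is precisely what delivers the minimality of $\gamma|_{\interval{0}{t_1}}$ with $t_1 > 1$ — the case that will contradict \cref{conjptnotmin}, since that segment contains the interior conjugate point at $t = 1$.
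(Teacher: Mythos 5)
Your proposal is correct and follows essentially the same route as the paper: path-independence of $I_p$ (\cref{Ipathind}), evaluation of $I_p$ on the graph of the geodesic ray (\cref{I=I*onsmooth} and \cref{hilbint1islength}), and an upper bound on the integrand along the competitor's graph --- your Weierstrass excess inequality is exactly the paper's use of the maximality in the definition \eqref{hamiltonmax} of the Hamiltonian, namely $H(\lambda'(t)) \geq \langle \lambda'(t), \dot{c}(t)\rangle - \tfrac{1}{2}|u'(t)|^2$. The two refinements you add --- the explicit energy-to-length passage by constant-speed reparametrisation, and the $t_0 \to 0^+$ limit handling the degeneracy of the central field at its centre $p$ --- are points the paper's own proof glosses over (it restricts to $t_0 > 0$ and identifies $\tfrac{1}{2}\mathrm{L}^2$ with the energy), so they strengthen rather than diverge from the argument.
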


\begin{proof}
The assumption that $\lambda_0$ is strongly normal and is a first conjugate covector implies, by adapting the proof of \cref{Fsimplycovers} with the conclusions of \cref{conjptnotmin}, that the neighbourhood on which $F_p$ is injective can be chosen such that it contains the ray $\mathrm{r}_{p, \lambda_0}$.

Consider an admissible trajectory $c : \interval{t_0}{t_1} \to M$ with control $u'$, different from $\gamma|_{\interval{t_0}{t_1}}$, but with the same endpoints ($t_0, t_1 \in \ointerval{0}{1 + \epsilon}$). We denote by $\Gamma'(t) = (t, c(t))$ the corresponding curves in $F_p(\mathcal{U})$. We also write $\lambda'_0(t)$ for the curve of initial covectors in $\mathrm{T}^*_p(M)$ such that $\Gamma'(t) = F_p(t, \lambda'_0(t))$, as well as the lift $\lambda'(t) := \mathrm{e}^{t \overrightarrow{H}}(p, \lambda'_0(t))$, as usual.

By definition of the sub-Riemannian Hamiltonian, we have that
\[
H(\lambda'(t)) \geq \langle \lambda'(t), \dot{c}(t) \rangle - \frac{1}{2} |u'(t)|^2, \text{ for almost every } t \in \interval{t_0}{t_1},
\]
as well as
\[
H(\lambda(t)) = \langle \lambda(t), \dot{\gamma}(t) \rangle - \frac{1}{2} |u(t)|^2, \text{ for all } t \in \interval{t_0}{t_1},
\]
since $\lambda(t)$ is the lift of the normal geodesic $\gamma$.

Therefore, we deduce that
\[
\frac{1}{2} \mathrm{L}(\gamma|_{\interval{t_0}{t_1}})^2 = \frac{1}{2} \int_{t_0}^{t_1} |u(t)|^2 \mathrm{d}t = \int_{t_0}^{t_1} \left( \langle \lambda(t), \dot{\gamma}(t) \rangle - H(\lambda(t)) \right) \mathrm{d}t = I_p[\Gamma] = I_p[c]
\]
since the Hilbert integral $I_p$ is path-independent by \cref{Ipathind}, and
\[
\frac{1}{2} \mathrm{L}(\gamma|_{\interval{t_0}{t_1}})^2 = \int_{t_0}^{t_1} \left( \langle \lambda'(t), \dot{c}(t) \rangle - H(\overline\lambda(t)) \right) \mathrm{d}t \leq \frac{1}{2} \int_{t_0}^{t_1} |u'(t)|^2 \mathrm{d}t = \frac{1}{2} \mathrm{L}(c)^2.
\]
\end{proof}

We can now conclude recalling from \cref{conjptnotmin} that a normal extremal that does not contain any abnormal subsegment cannot be length minimising past a conjugate point. The proof of Theorem 1 will be completed by the following Theorem.

\begin{theorem}
\label{noninjthm}
Let $M$ be a sub-Riemannian manifold and $p \in M$. If $\lambda_0 \in U_p \subseteq \mathrm{T}^*_p(M)$ is a strongly normal and regular conjugate covector of finite order, then the exponential map $\mathrm{\exp}_p : U_p \to M$ is not injective in any neighbourhood of $\lambda_0$.
\end{theorem}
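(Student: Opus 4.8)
The plan is to argue by contradiction, pitting the minimality produced by a simply-covering field of extremals against the failure of minimality past a conjugate point. So suppose that $\exp_p$ is injective on some neighbourhood of $\lambda_0$. By \cref{Fsimplycovers} this is equivalent to the family of extremals $F_p$ simply covering $(1, q)$, where $q := \exp_p(\lambda_0)$, so we are immediately handed a central field of extremals around $(1, q) \in \R \times M$ and, by \cref{Ipathind}, a path-independent Hilbert integral $I_p$ on a neighbourhood of $(1, q)$. Note that \cref{Ipathind} only asks $\lambda_0$ to be regular conjugate of finite order, which is exactly our hypothesis.

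With the field in hand, I would set $\gamma(t) := \exp_p(t\lambda_0)$ and extend it slightly beyond $t = 1$ to an interval $[0, 1 + \epsilon)$; this is possible because $\gamma$ is strongly normal, so its conjugate times form a discrete set and $\epsilon > 0$ can be taken with no conjugate time in $(1, 1 + \epsilon)$. The decisive step is then to apply \cref{Iindmin}: the simply-covering property together with $\lambda_0$ being regular conjugate of finite order gives that every subsegment of $\gamma$, and in particular $\gamma|_{[0, 1 + \epsilon/2]}$, is a local length-minimiser among admissible curves sharing its endpoints. But $\gamma(1)$ is conjugate to $p$ and now lies in the interior of $[0, 1 + \epsilon/2]$, so \cref{conjptnotmin}, which applies since the strongly normal $\gamma$ contains no abnormal segment, asserts that $\gamma|_{[0, 1 + \epsilon/2]}$ is \emph{not} a local minimiser. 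The two conclusions are contradictory, forcing $\exp_p$ to be non-injective in every neighbourhood of $\lambda_0$.

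The delicate point — and the main obstacle — is that \cref{Iindmin} is phrased for a \emph{first} conjugate covector, whereas $\lambda_0$ is here only assumed to be a regular conjugate covector of finite order. The first-conjugate hypothesis is what allows the neighbourhood on which $F_p$ is injective to be enlarged so as to contain the whole ray $\mathrm{r}_{p, \lambda_0}$ from $p$ out to $\lambda_0$: up to the first conjugate time the geodesic is strictly minimising and the field is a genuine graph over the ray, but if an earlier conjugate time $s_0 < 1$ occurred, the ray would pass through a singularity of $\exp_p$ at $s_0 \lambda_0$ and no field based at $p$ could be spread over all of $\mathrm{r}_{p, \lambda_0}$. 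I would therefore begin by letting $s_0 \in (0, 1]$ be the first conjugate time along $\gamma$: when $s_0 = 1$ the covector $\lambda_0$ is already first conjugate and the argument above goes through directly, and I expect the essential difficulty of the proof to be the reduction of the case $s_0 < 1$ to this one — verifying, via \cref{constantorder} and \cref{locusmanifold} along the ray, that the first conjugate covector $s_0 \lambda_0$ again falls under the hypotheses, and carefully relating the local injectivity assumed at $\lambda_0$ to the folding of $\exp_p$ occurring at the earlier conjugate covector.
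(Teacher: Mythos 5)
Your proposal is correct and follows essentially the same route as the paper's proof: assuming injectivity near $\lambda_0$, it chains \cref{Fsimplycovers}, \cref{Ipathind} and \cref{Iindmin} to conclude that the extension of $\gamma$ past $t = 1$ is a local length-minimiser, contradicting \cref{conjptnotmin}. The ``delicate point'' you single out --- reducing a regular conjugate covector of finite order to the first conjugate case --- is precisely the step the paper dispatches with a one-line ``without loss of generality'', so your reconstruction matches the published argument and is, if anything, more explicit about what that reduction would actually require.
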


\begin{proof}
Without loss of generality, we may assume that $\lambda_0$ is a first conjugate covector along the geodesic it generates. Since $U_p$ is open, the geodesic $\gamma(t) := \exp_p(t \lambda_0)$ is well defined for all $t \in \rinterval{0}{1+\epsilon}$ for some $\epsilon > 0$. If $\exp_p$ were to be injective in some neighbourhood of $\lambda_0$, then the family of extremals $F_p$ would simply cover $(1, \mathrm{exp}_p(\lambda_0))$ by \cref{Fsimplycovers}. The corresponding Hilbert integral $I_p$ would be path independent by \cref{Ipathind}. Now, \cref{Iindmin} would imply that the normal geodesic $\gamma : \rinterval{0}{1+\epsilon} \to M$ is a local length minimiser among all the admissible trajectories with the same endpoints, which would be in contradiction with \cref{conjptnotmin} since $\lambda_0$ is assumed to be conjugate.
\end{proof}

In the classical theory of calculus of variations (see for example \cite{morse1932}), the fact that an extremal is part of a field of extremals is a sufficient condition for minimality is usually deduced from studying the Weierstrass E-function (see \cite{shaw1966}). That argument is replaced in optimal control problems with the maximum principle. Let us mention two instances where we have found related uses of these techniques. In \cite{nowakowski1988}, fields of extremals are constructed in order to formulate sufficient conditions of optimality generalising Weierstrass' condition. The fact that normal extremal trajectories are locally minimising is proved by constructing a (non-central) field of extremals in \cite[Section 4.7]{agrachev2020}.

Let us finally discuss how the result \cref{noninjthm} could be extended to the whole conjugate locus. If the regular conjugate covectors of finite order is dense in the regular conjugate locus, then by density the sub-Riemannian exponential map will also fail to be injective in any neighbourhood of a regular conjugate covector of infinite order. This is trivially the case if the sub-Riemannian manifold is analytic or if any geodesic $\gamma(t) := \exp_p(t \lambda_0)$ is ample and equiregular for all $(p, \lambda_0) \in \T^*(M)$. Furthermore, if the sub-Riemannian manifold is ideal then the property (R3) from \cref{regsubriemexp} implies that regular conjugate covectors is dense in the conjugate locus, as in \cite[Theorem 3.1]{warner1965}.           % Morse-Littauer's theorem

\section{Applications and final remarks}
\label{applications}

\subsection{Metric geometry}

The non-local injectivity of the sub-Riemannian exponential map implies a cotangent version of Shankar-Sormani's equivalence for synthetic notions of conjugate points in length space (see \cite{sormani}) along a strongly normal geodesic with an initial covector that is regular conjugate and that has finite order.

\begin{definition}
Let $X$ be a geodesic space and denote by $\mathcal{L}$ its length structure. If $\gamma_n, \gamma$ are minimising geodesics parametrised on $\interval{0}{1}$, we say that $\gamma_n$ converges to $\gamma$ if they converge for the metric
\[
\mathrm{d}_{\mathrm{Geo}(X)}(\gamma_1, \gamma_2) := \sup_{t \in \interval{0}{1}}|\gamma_1(t) - \gamma_2(t)| + |\mathrm{L}(\gamma_1) - \mathrm{L}(\gamma_2)|.
\]
\end{definition}

If the geodesic space in question is a sub-Riemannian manifold $M$, then it is easy to see that a sequence of normal geodesics $\gamma_n(t) := \mathrm{exp}_{p_n}(t \lambda_0^n)$ converges to $\gamma(t) := \mathrm{exp}_p(t \lambda_0)$ if and only if $p_n$ converges to $p$ in $M$ and $\lambda_0^n$ converges to $\lambda_0$ in $\mathrm{T}^*_p(M)$.

\begin{definition}
\label{defconj}
Let $X$ be a geodesic space, and a geodesic $\gamma : \interval{0}{1} \to X$ joining two points $p$ and $q$ of $X$. We say that
\begin{enumerate}[label=\normalfont(\roman*)]
    \item $q$ is \textit{one-sided conjugate to $p$} along $\gamma$ if there exists a sequence of points $(q_n)$ converging to $q$ such that for every $n$, there are two distinct geodesics $\gamma_n^1$ and $\gamma^2_n$ joining $p$ to $q_n$ and both converging to $\gamma$;
    \item $p$ and $q$ are \textit{symmetrically conjugate along $\gamma$} if there exist sequences of points $(p_n)$ converging to $p$ and $(q_n)$ converging to $q$ such that that for every $n$, there are two distinct geodesics $\gamma^1_n$ and $\gamma^2_n$ joining $p_n$ to $q_n$ and both converging to $\gamma$;
    \item $p$ and $q$ are \textit{unreachable conjugate along $\gamma$} if there are sequences of points $(q_n)$ converging to $q$ and $(p_n)$ converging to $p$ such that if $\gamma_n$ is geodesic joining $q_n$ to $p_n$ for all $n$, then the sequence $(\gamma_n)$ cannot converge to $\gamma$;
    \item $p$ and $q$ are \textit{ultimate conjugate points along $\gamma$} if they are symmetrically conjugate or unreachable conjugate along $\gamma$.
\end{enumerate}
\end{definition}

The relationship in sub-Riemannian geometry between these different definitions of conjugate points is given by the following theorem.

\begin{theorem}
\label{equconj}
Let $M$ be an ideal sub-Riemannian manifold, $\gamma : \interval{0}{1} \to M$ be a normal geodesic such that its initial covector $\lambda_0$ is regular conjugate and has finite order, and denote $p := \gamma(0)$ and $q := \gamma(1)$. Then, the following statements are equivalent:
\begin{enumerate}[label=\normalfont(\roman*)]
    \item $q$ is conjugate to $p$ along $\gamma$;
    \item $q$ is one-sided conjugate to $p$ along $\gamma$;
    \item $q$ is symmetrically conjugate to $p$ along $\gamma$.
    %\item $p$ and $q$ are unreachable conjugate points along $\gamma$;
    %\item $p$ and $q$ are ultimate conjugate points along $\gamma$.
\end{enumerate}
Furthermore, if $p$ and $q$ are unreachable conjugate points along $\gamma$, then $q$ is also conjugate to $p$ along $\gamma$.
\end{theorem}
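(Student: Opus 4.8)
The plan is to prove the cycle $(i)\Rightarrow(ii)\Rightarrow(iii)\Rightarrow(i)$ and then the final clause, with \cref{noninjthm} doing the work in the one hard direction and a single inverse-function-theorem argument handling all the converses. The implication $(ii)\Rightarrow(iii)$ is immediate: a one-sided conjugate configuration is exactly a symmetrically conjugate one in which the base sequence is the constant sequence $p_n=p$. So the substance lies in the forward step $(i)\Rightarrow(ii)$, which consumes the regularity and finite-order hypotheses, and in $(iii)\Rightarrow(i)$ together with the \emph{furthermore} clause, which are soft and require only that $M$ be ideal.

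For $(i)\Rightarrow(ii)$ I would first reduce to the case where $\lambda_0$ is a \emph{first} conjugate covector along $\gamma$. Since $M$ is ideal, $\gamma$ contains no nontrivial abnormal subsegment and is therefore strongly normal, so all hypotheses of \cref{noninjthm} are met and $\exp_p$ fails to be injective on every neighbourhood of $\lambda_0$. Shrinking a nested basis of neighbourhoods then yields distinct covectors $\lambda_n^1\neq\lambda_n^2$, both tending to $\lambda_0$, with $\exp_p(\lambda_n^1)=\exp_p(\lambda_n^2)=:q_n$. By continuity $q_n\to q$, and because $p$ is fixed and $\lambda_n^i\to\lambda_0$, the characterisation of convergence of normal geodesics recalled before \cref{defconj} (namely $p_n\to p$ and $\lambda_0^n\to\lambda_0$) gives that the two normal geodesics $t\mapsto\exp_p(t\lambda_n^i)$ converge to $\gamma$. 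It remains to certify that these are bona fide geodesics in the synthetic sense of \cref{defconj}; here I would use that, by continuity of conjugate times along rays in (R3) of \cref{regsubriemexp}, each ray through $\lambda_n^i$ has its first conjugate time near $1$, so that $q_n$ is reached at or before this first conjugate time, and then invoke \cref{conjptnotmin} to conclude that each such restricted geodesic is length-minimising among admissible trajectories with the same endpoints. This produces the two distinct converging geodesics demanded by $(ii)$.

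For $(iii)\Rightarrow(i)$ and the \emph{furthermore} clause I would argue by contraposition using the smooth map $\Psi(p',\lambda'):=(p',\exp_{p'}(\lambda'))$ defined near $(p,\lambda_0)$ in $\T^*(M)$. If $\lambda_0$ is \emph{not} conjugate, then $\diff_{\lambda_0}\exp_p$ is invertible; since $\Psi$ fixes the base point, its differential at $(p,\lambda_0)$ is block-triangular with diagonal blocks the identity and $\diff_{\lambda_0}\exp_p$, hence invertible, so $\Psi$ is a local diffeomorphism and in particular injective near $(p,\lambda_0)$. Were $p,q$ symmetrically conjugate, the ideal hypothesis forces the minimising geodesics $\gamma_n^1,\gamma_n^2$ to be normal, the convergence criterion gives initial data $(p_n,\mu_n^i)\to(p,\lambda_0)$ with $\mu_n^1\neq\mu_n^2$, and the identity $\Psi(p_n,\mu_n^1)=(p_n,q_n)=\Psi(p_n,\mu_n^2)$ contradicts injectivity for large $n$; thus $(iii)$ forces $\lambda_0$ to be conjugate. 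The same local diffeomorphism shows that every endpoint pair $(p_n,q_n)\to(p,q)$ is joined by a normal geodesic that is $C^1$-close to $\gamma$, contradicting the defining property of an unreachable conjugate pair, which yields the final clause.

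I expect the certification of minimality inside $(i)\Rightarrow(ii)$ to be the main obstacle. The non-injectivity furnished by \cref{noninjthm} only produces two distinct \emph{extremals} reaching $q_n$, whereas \cref{defconj} is phrased in terms of genuine geodesics of the length space $(M,\diff)$; bridging this gap is exactly where the first-conjugate reduction, the control of conjugate times in (R3), and \cref{conjptnotmin} must be combined, and where — should one insist on globally minimising competitors — the hypersurface structure of the conjugate locus from \cref{locusmanifold} is needed to locate $q_n$ on the doubly covered side of the caustic and on the minimising side of the cut time. By contrast, I expect the converse directions to be routine applications of the inverse function theorem, the ideal hypothesis entering only to guarantee that the competing geodesics are normal and hence amenable to the covector-convergence criterion.
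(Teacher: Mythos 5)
Your treatment of (ii)$\Rightarrow$(iii), of (iii)$\Rightarrow$(i), and of the unreachable-conjugate clause is correct and essentially coincides with the paper's proof: the paper likewise handles the converse directions by the inverse function theorem applied to the map $E : (x,\eta) \mapsto (x, \exp_x(\eta))$ (your $\Psi$; the paper's displayed formula writes $\exp_p$ where $\exp_x$ is meant), combined with the equivalence between convergence of normal geodesics and convergence of their initial data. Your direct argument for the final clause, which the paper simply delegates to Shankar--Sormani, is also fine. Likewise the core of your (i)$\Rightarrow$(ii) --- apply \cref{noninjthm} (legitimate, since ideality makes $\gamma$ strongly normal) to produce $\eta_n^1 \neq \eta_n^2$ converging to $\lambda_0$ with $\exp_p(\eta_n^1) = \exp_p(\eta_n^2) =: q_n \to q$, and observe that the associated normal geodesics are distinct and converge to $\gamma$ --- is exactly the content of the paper's first paragraph.

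The genuine problem is the step you yourself single out as the main obstacle: the ``certification of minimality''. It rests on a misreading of \cref{defconj}. The geodesics appearing there are metric geodesics, i.e.\ locally length-minimising curves; the remark following \cref{equconj} states that the only role of the ideal hypothesis is to guarantee that such curves are exactly projections of normal extremals, and the preliminaries record that every normal extremal projection is locally minimising. Hence the two normal geodesics produced by \cref{noninjthm} already qualify, and there is nothing left to certify. If instead one insisted, as you do, that the $\gamma_n^i$ be globally minimising, the implication (i)$\Rightarrow$(ii) would simply be false in general: a limit of minimising geodesics is minimising, so (ii) in that reading would force $\gamma$ itself to be minimising on $\interval{0}{1}$, whereas hypothesis (i) is compatible with the cut time of $\gamma$ being strictly smaller than $1$. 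For the same reason your proposed repair cannot work: \cref{conjptnotmin} asserts only that an extremal without conjugate points is a \emph{local} length minimiser on the space of admissible trajectories with fixed endpoints --- absence of conjugate points never yields minimality among \emph{all} competitors with the same endpoints --- and the ``reduction to a first conjugate covector'' is not a legitimate without-loss-of-generality step here, since the conclusion concerns $q = \gamma(1)$ rather than the first conjugate point along $\gamma$. Deleting the certification step entirely (and with it the appeals to (R3) of \cref{regsubriemexp} and to \cref{locusmanifold}) leaves you with precisely the paper's proof.
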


\begin{remark}
The ideal assumption in \cref{equconj} is there to ensure that the sequences of geodesics in \cref{defconj} consist of normal extremals.
\end{remark}

\begin{proof}
If $(q_n)$ is a sequence of points converging to $q$ and if $(\gamma_n^1)$, $(\gamma^2_n)$ are two sequences of geodesics joining $p$ to $q_n$ and converging to $\gamma$, then their initial covectors $\eta^1_n$ and $\eta^2_n$ will converge to $\lambda_0$ in $\mathrm{T}^*_p(M)$. By \cref{noninjthm}, the normal extremals $\gamma^1_n$ and $\gamma_n^2$ must coincide for $n$ large enough. This proves that (i) implies (ii). It is easy to see from \cref{defconj} that (ii) immediately implies (iii).

Consider the function
\[
E : \mathrm{T}^*(M) \to M \times M : (x, \eta) \mapsto (x, \mathrm{exp}_p(\eta)),
\]
which has a differential at $(p, \lambda_0)$ that is a linear isomorphism if $q$ is not conjugate to $p$ along $\gamma$. By the inverse function theorem, there exists a neighbourhood of $(p, \lambda_0)$ on which the function $E$ is a diffeomorphism. Suppose now that $q$ is one-sided conjugate to $p$ along $\gamma$, so that there exists a sequence of points $(q_n)$ converging to $q$ and for all $n$ two distinct normal extremals $\gamma^1_n$ and $\gamma^2_n$, with initial covectors $\eta^1_n$ and $\eta^2_n$ respectively, joining $p$ to $q_n$ and converging to $\gamma$. The proof of (ii) implies (i) follows by observing that $E(p, \eta^1_n) = E(p, \eta^2_n)$ for $n$ large enough while $\eta^1_n \neq \eta^2_n$ since $\gamma_n^1$ and $\gamma^2_n$ are distinct. By the same argument, it can be seen that (iii) implies (i). The last statement about unreachable conjugate points is the same as in \cite{sormani}.
\end{proof}

\subsection{Structure of the sub-Riemannian cut locus}
\label{cutlocus}

\cref{main1} is also related to the structure of the sub-Riemannian cut locus. Let $M$ be an ideal sub-Riemannian manifold and define the cut time of $(p, \lambda_0) \in \mathrm{T}^*_p(M)$ as
\[
t_{\text{cut}}(p, \lambda_0) := \sup \{ t > 0 \mid \exp_p(\cdot \lambda_0)|_{\interval{0}{t}} \text{ is a length-minimising geodesic}\}.
\]
The (cotangent) cut locus is then
\[
\mathrm{Cut}(p) := \{ \lambda_0 \in \mathrm{T}^*_p(M) \mid t_{\text{cut}}(p, \lambda_0) = 1\}.
\]
and we denote by $\mathrm{Cut}^1(p)$ the subset of $\mathrm{Cut}(p)$ consisting of those covectors $\lambda_0$ for which there exists another $\lambda_0' \in \mathrm{Cut}(p)$ such that $\exp_p(\lambda_0) = \exp_p(\lambda_0')$ and $\lambda_0 \neq \lambda_0'$. Note that by \cite[Theorem 8.72.]{agrachev2020}, if $\lambda_0 \in \mathrm{Cut}(p) \setminus \mathrm{Cut}^1(p)$, then $\lambda_0 \in \mathrm{Conj}(p)$. In this specific case, we are able to prove the local non-injectivity property of the exponential map without extra assumptions and without invoking the regularity of the conjugate locus.

\begin{theorem}
    \label{morse-littauer-cut}
    Let $M$ be an ideal sub-Riemannian manifold, and $p \in M$. If $\lambda_0 \in \mathrm{Cut}(p) \setminus \mathrm{Cut}^1(p)$, then the exponential map $\exp_p$ fails to be injective in any neighbourhood of $\lambda_0$.
\end{theorem}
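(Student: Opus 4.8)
The plan is to prove this directly from the metric characterisation of $\lambda_0$ as a cut covector, bypassing the variational machinery developed for \cref{main1}. Since $\lambda_0 \in \mathrm{Cut}(p)$, the geodesic $t \mapsto \exp_p(t\lambda_0)$ is minimising exactly up to $t = 1$; hence for any $s > 1$ the truncated geodesic on $\interval{0}{s}$ is strictly longer than the distance between its endpoints, so a genuinely shorter minimiser to the same endpoint must exist. The idea is to produce, for a sequence of endpoints approaching $q := \exp_p(\lambda_0)$ from beyond the cut time, two distinct covectors—one coming from the non-minimising ray through $\lambda_0$, and one from a competing minimiser—both converging to $\lambda_0$. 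This forces $\exp_p$ to fail injectivity on every neighbourhood of $\lambda_0$.

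Concretely, I would fix a sequence $s_n \downarrow 1$ with $s_n > 1$ and set $q_n := \exp_p(s_n \lambda_0)$, so that $q_n \to q$. Because $t_{\text{cut}}(p, \lambda_0) = 1 < s_n$, the geodesic along $s_n \lambda_0$ is not minimising, whence $\diff(p, q_n) < s_n \sqrt{2 H(\lambda_0)}$, the length of that geodesic. As $M$ is ideal, every length minimiser between nearby points is a normal geodesic, and for $q_n$ close enough to $q$ such a minimiser exists; let $\mu_n \in \mathrm{T}^*_p(M)$ be an initial covector of a minimiser from $p$ to $q_n$, so that $\exp_p(\mu_n) = q_n$ and $\sqrt{2 H(\mu_n)} = \diff(p, q_n)$. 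Since $H$ is fibrewise quadratic, $H(s_n \lambda_0) = s_n^2 H(\lambda_0) > H(\mu_n)$, so $\mu_n \neq s_n \lambda_0$ for every $n$.

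The crux is to show that $\mu_n \to \lambda_0$ along a subsequence. First I would invoke compactness of the set of minimising covectors mapping into a compact neighbourhood of $q$ to extract a subsequence $\mu_{n_k} \to \mu$; continuity of $\exp_p$ then gives $\exp_p(\mu) = q$, while continuity of $H$ yields $\sqrt{2 H(\mu)} = \lim_k \diff(p, q_{n_k}) = \diff(p, q)$, so the geodesic along $\mu$ has length $\diff(p,q)$ and is therefore minimising. Now if $\mu \neq \lambda_0$, then $p$ and $q$ would be joined by two distinct minimising geodesics, which forces $\mu \in \mathrm{Cut}(p)$ by the standard structure of the cut locus; but then $\mu \in \mathrm{Cut}(p)$, $\mu \neq \lambda_0$ and $\exp_p(\mu) = \exp_p(\lambda_0)$ would place $\lambda_0$ in $\mathrm{Cut}^1(p)$, contrary to hypothesis. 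Hence $\mu = \lambda_0$, i.e.\ $\mu_{n_k} \to \lambda_0$. Since also $s_{n_k} \lambda_0 \to \lambda_0$ and $\exp_p(\mu_{n_k}) = \exp_p(s_{n_k} \lambda_0) = q_{n_k}$ with $\mu_{n_k} \neq s_{n_k} \lambda_0$, every neighbourhood of $\lambda_0$ eventually contains two distinct covectors with the same image, so $\exp_p$ is not injective there.

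I expect the compactness of minimising covectors to be the main technical obstacle: in a rank-deficient cotangent space the Hamiltonian $H_p$ is degenerate, so bounding $\sqrt{2 H(\mu_n)} = \diff(p,q_n)$ does not by itself bound $\mu_n$ in $\mathrm{T}^*_p(M)$. The resolution rests on completeness of $M$ together with the ideal assumption (which excludes minimising covectors escaping along abnormal directions), yielding properness of $\exp_p$ restricted to minimising covectors; this is where I would lean on the cut-locus theory of \cite{agrachev2020}. By contrast, the fact that $\lambda_0 \in \mathrm{Conj}(p)$, which follows from \cite[Theorem 8.72.]{agrachev2020}, is not needed for the argument itself: the whole proof runs on the metric property $\lambda_0 \in \mathrm{Cut}(p) \setminus \mathrm{Cut}^1(p)$ alone, which is exactly why it sidesteps the regularity analysis of the conjugate locus required for \cref{main1}.
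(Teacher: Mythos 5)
Your proposal is correct in outline but takes a genuinely different route from the paper. The paper imports the global result \cite[Theorem 8.73 and Corollary 8.74]{agrachev2020} to produce a sequence $q_k \to q$, each point joined to $p$ by \emph{two} distinct minimisers, and then proves that both families of initial covectors converge to $\lambda_0$. You instead manufacture the non-injective pairs by hand: the non-minimising covector $s_n\lambda_0$ and the minimising covector $\mu_n$, distinguished by the strict energy inequality $H(\mu_n) < H(s_n\lambda_0)$, with common image $\exp_p(s_n\lambda_0)$; the hypothesis $\lambda_0 \in \mathrm{Cut}(p)\setminus\mathrm{Cut}^1(p)$ is then used to force every limit point of $(\mu_n)$ to equal $\lambda_0$. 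This is in effect an upgrade of the proof of \cite[Theorem 8.72]{agrachev2020} (cut point plus unique minimiser implies conjugate point) from a statement about $\mathrm{d}_{\lambda_0}\exp_p$ to a statement about injectivity. What your route buys is independence from the global Morse--Littauer-type theorem; what the paper's route buys is that the existence of minimisers to the relevant nearby points comes packaged in the citation, whereas you must justify the existence of minimisers from $p$ to $\exp_p(s_n\lambda_0)$ separately, which requires completeness --- a hypothesis not stated in the theorem (though the paper's own citations carry a similar implicit reliance, so this is a shared issue rather than a defect peculiar to your argument).

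The one step you defer is exactly where the paper's proof does its real work, and I do not believe you can cite it off the shelf: the compactness of the minimising covectors $\mu_n$. The mechanism is the one you gesture at, and it should be written out as the paper does. By compactness of minimisers \cite[Proposition 8.67]{agrachev2020}, up to a subsequence the minimisers converge uniformly and their controls $u_n$ converge strongly in $\L^2$ to the control $u$ of the unique minimiser $\gamma$. If the Lagrange multipliers $\lambda_{1,n}$ of these minimisers were unbounded, the normalised covectors $\lambda_{1,n}/|\lambda_{1,n}|$ would subconverge to some $\eta \neq 0$, and passing to the limit in the multiplier rule
\begin{equation*}
	\frac{\lambda_{1,n}}{|\lambda_{1,n}|}\, D_{u_n}E_p = \frac{u_n}{|\lambda_{1,n}|}
\end{equation*}
gives $\eta\, D_u E_p = 0$, i.e.\ an abnormal multiplier for $\gamma$, contradicting the ideal assumption. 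Hence $(\lambda_{1,n})$ is bounded, every limit point is a normal Lagrange multiplier for $\gamma$, and this multiplier is unique because $\gamma$ is not abnormal; since initial and final covectors are linked by the Hamiltonian flow, $\mu_n \to \lambda_0$. With this argument substituted for your appeal to ``properness of $\exp_p$ restricted to minimising covectors'', your proof is complete; note also that your Cut--$\mathrm{Cut}^1$ dichotomy step tacitly uses that in an ideal structure distinct covectors generate distinct geodesics (two normal lifts of one curve would produce an abnormal multiplier), a one-line remark worth including.
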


\begin{proof}
    Let $\gamma(t) := \exp_p(t \lambda_0)$ be the unique length minimising geodesic between $p$ and $q := \exp_p(\lambda_0)$. Let $u$ be its minimal control and $\lambda_1 \in \mathrm{T}^*_p(M)$ its Langrange multiplier. By \cite[Corollary 8.74.]{agrachev2020}, we find a sequence of points $q_k \in M$ converging to $q$ such that for each $k$ there are two distinct length minimisers $\gamma_k^1$ and $\gamma_k^2$ joining $p$ and $q_k$. We denote by $\lambda_{1, k}^1$ and $\lambda^2_{1, k}$ the normal Lagrange multipliers of $\gamma_k^1$ and $\gamma_k^2$ respectively, as well as $u_k^1$ and $u^2_k$ for their respective minimal control. We write $\lambda_{0, k}^1$ and $\lambda_{0, k}^2$ for their initial covectors. They satisfy $\exp_p(\lambda_{0, k}^1) = \exp_p(\lambda_{0, k}^2)$ and we would like to prove that $\lambda_{0, k}^1$ and $\lambda_{0, k}^2$ both converge to $\lambda_0$ as $k \to +\infty$.

    Modulo extraction of a subsequence, we may assume by compactness of length minimisers (\cite[Proposition 8.67.]{agrachev2020}) that $u_k^1$ and $u^2_k$ converge in the strong $L^2$ topology, as well as $\gamma_k^1$ and $\gamma_k^2$ converge uniformly to a geodesic joining $p$ and $q$. This is unique by our assumption. So, $\gamma_k^1$ and $\gamma_k^2$ converge uniformly to $\gamma$ and $u_k^1$ and $u^2_k$ converge to $u$. 

    Choose any metric $|\cdot|$ on $\mathrm{T}^*_p(M)$ that we only use to prove estimates. We want to show that $\lambda_{1, k}^1$ is convergent. Assume that it is not, this would mean that there exists a subsequence of $|\lambda_{1, k}^1|$ that diverges to $+\infty$. Now, $\eta_{1, k}^1 := \lambda_{1, k}^1/|\lambda_{1, k}^1|$ of course converges to some $\eta_{1}^1$. The Lagrange multiplier rule (\cite[Section 8.3]{agrachev2020}) for $\gamma^1_k$ implies that
    \[
    \frac{\lambda_{1, k}^1}{|\lambda_{1, k}^1| } D_{u_k} E_{p} = \frac{u_k}{|\lambda_{1, k}^1| }
    \]
    and thus, by taking the limit, $\eta_{1}^1 D_{u} E_{p} = 0$. Here, we have written $E_p$ for the endpoint map (see \cite[Section 8.1]{agrachev2020}). We obtain that $\eta_{1}^1$ is an abnormal Lagrange multiplier but this is impossible since we assumed $\gamma$ to be non-abnormal.

    Therefore, $\lambda_{1, k}^1$ must be convergent and by taking again the limit in the Lagrange multiplier rule
    \[
    \lambda_{1, k}^1 D_{u_k} E_{p} = u_k
    \]
    we obtain that its limit must a (normal) Lagrange multiplier for $\gamma$. This is necessarily unique since $\gamma$ is not abnormal and thus $\lambda_{1, k}^1 \to \lambda_1$, as well as $\lambda_{1, k}^2 \to \lambda_1$ by the same argument. This also implies that $\lambda_{0, k}^1 \to \lambda_0$ and $\lambda_{0, k}^2 \to \lambda_0$ since initial and final covectors are linked by the Hamiltonian flow.
\end{proof}

\begin{corollary}
    \label{density}
    Let $M$ be an ideal sub-Riemannian manifold, and $p \in M$. The set $\mathrm{Cut}^1(p)$ is dense in $\mathrm{Cut}(p)$.
\end{corollary}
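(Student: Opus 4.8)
The plan is to reuse, almost verbatim, the construction carried out in the proof of \cref{morse-littauer-cut}, and then simply observe that the approximating covectors it produces already lie in $\mathrm{Cut}^1(p)$. First I would dispose of the trivial case: if $\lambda_0 \in \mathrm{Cut}(p)$ happens to lie in $\mathrm{Cut}^1(p)$, then it is its own witness and there is nothing to prove. So assume $\lambda_0 \in \mathrm{Cut}(p) \setminus \mathrm{Cut}^1(p)$ and set $q := \exp_p(\lambda_0)$.

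For such a $\lambda_0$, \cite[Corollary 8.74.]{agrachev2020} — exactly as invoked in the proof of \cref{morse-littauer-cut} — yields a sequence $q_k \to q$ and, for each $k$, two distinct length-minimising geodesics $\gamma_k^1, \gamma_k^2$ joining $p$ to $q_k$, whose initial covectors $\lambda_{0,k}^1 \neq \lambda_{0,k}^2$ both converge to $\lambda_0$ in $\mathrm{T}^*_p(M)$. Since $\exp_p(\lambda_{0,k}^1) = q_k = \exp_p(\lambda_{0,k}^2)$ with $\lambda_{0,k}^1 \neq \lambda_{0,k}^2$, in order to place both covectors in $\mathrm{Cut}^1(p)$ it suffices to verify that each of them belongs to $\mathrm{Cut}(p)$, i.e. that $t_{\text{cut}}(p, \lambda_{0,k}^i) = 1$.

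This last point is the key step, and I would argue it by contradiction. If, say, $\gamma_k^1$ were still minimising on $[0, 1+\delta]$ for some $\delta > 0$, then concatenating $\gamma_k^2|_{\interval{0}{1}}$ with $\gamma_k^1|_{\interval{1}{1+\delta}}$ produces a path from $p$ to $\gamma_k^1(1+\delta)$ of length $\diff(p, q_k) + \mathrm{L}(\gamma_k^1|_{\interval{1}{1+\delta}}) = \mathrm{L}(\gamma_k^1|_{\interval{0}{1+\delta}}) = \diff(p, \gamma_k^1(1+\delta))$, hence itself minimising and therefore a smooth geodesic. But in the ideal setting every minimiser is a normal geodesic, uniquely determined by its covector at the endpoint; as $\gamma_k^1 \neq \gamma_k^2$, their terminal covectors, and hence their velocities at $q_k$, differ, so the concatenation has a genuine corner at $q_k$, contradicting smoothness of a minimiser. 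Thus $\gamma_k^1$ is not minimising past $q_k$, so $\lambda_{0,k}^1 \in \mathrm{Cut}(p)$, and symmetrically $\lambda_{0,k}^2 \in \mathrm{Cut}(p)$.

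Each $\lambda_{0,k}^i$ therefore lies in $\mathrm{Cut}^1(p)$ (witnessed by the other), and since $\lambda_{0,k}^i \to \lambda_0$ we conclude $\lambda_0 \in \overline{\mathrm{Cut}^1(p)}$. Together with the trivial case this establishes that $\mathrm{Cut}^1(p)$ is dense in $\mathrm{Cut}(p)$. I expect the only real obstacle to be the corner-rounding argument for $t_{\text{cut}} = 1$, where the ideal hypothesis is essential to guarantee that distinct minimisers have distinct endpoint covectors; the remainder is a direct reading of the already-established construction.
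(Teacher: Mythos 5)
Your construction is, in substance, the paper's own: the published proof just takes a nested basis of neighbourhoods of $\lambda_0 \in \mathrm{Cut}(p)\setminus\mathrm{Cut}^1(p)$, invokes \cref{morse-littauer-cut}, and asserts that each neighbourhood meets $\mathrm{Cut}^1(p)$ --- exactly the covectors $\lambda_{0,k}^1, \lambda_{0,k}^2$ you extract from \cite[Corollary 8.74.]{agrachev2020}. The one step the paper leaves implicit, and which you rightly isolate as the crux, is that these covectors lie in $\mathrm{Cut}(p)$, i.e.\ that neither minimiser survives past $q_k$. Unfortunately your justification of this step contains a genuine gap: you argue that since $\gamma_k^1 \neq \gamma_k^2$, ``their terminal covectors, and hence their velocities at $q_k$, differ''. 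The covectors do differ (two normal extremals with the same covector at time $1$ coincide, by backward uniqueness of the Hamiltonian flow), but in sub-Riemannian geometry distinct covectors need not give distinct velocities: the velocity is recovered from the covector through the linear map $\lambda \mapsto \sum_{i} \langle \lambda, X_i(q_k)\rangle X_i(q_k)$, whose kernel is the annihilator of $\D_{q_k}$, hence nontrivial whenever $\mathrm{rank}(q_k) < \dim M$. If the two minimisers happened to arrive at $q_k$ with equal velocities, your concatenated curve would be $C^1$ at the junction, there would be no corner, and the contradiction with smoothness of minimisers would evaporate. This is precisely the point where the classical Riemannian corner argument does not transfer verbatim to the sub-Riemannian setting.

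The repair is to run the contradiction at the level of covectors rather than velocities, exactly as in the paper's proof of \cref{morse-littauer-cut}. If the concatenation $c$ of $\gamma_k^2|_{\interval{0}{1}}$ with $\gamma_k^1|_{\interval{1}{1+\delta}}$ were minimising, it would (by ideality) be a normal geodesic admitting a normal lift $\lambda(t)$; since the segment $c|_{\interval{1}{1+\delta}} = \gamma_k^1|_{\interval{1}{1+\delta}}$ is not abnormal, its normal Lagrange multiplier is unique, so $\lambda(1)$ must equal the time-$1$ covector of $\gamma_k^1$; flowing backwards through $\mathrm{e}^{t\overrightarrow{H}}$ then forces $c|_{\interval{0}{1}} = \gamma_k^1|_{\interval{0}{1}}$, i.e.\ $\gamma_k^1 = \gamma_k^2$, a contradiction. (Alternatively, the fact that the arrival of a second minimiser marks the cut time is already contained in the equivalence of \cite[Theorem 8.72.]{agrachev2020} and can simply be cited.) With this single step repaired, your proof is correct and coincides, in substance, with the paper's argument.
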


\begin{proof}
    Let $\lambda_0 \in \mathrm{Cut}(p) \setminus \mathrm{Cut}^1(p)$ and consider a sequence of decreasing open neighbourhoods $(U_n) \subseteq \mathrm{T}^*_p(M)$ such that $\cap_n U_n = \{\lambda_0\}$. Then, by \cref{noninjthm}, the exponential map $\exp_p$ fails to be injective on each $U_n$ and so for every $n$, the intersection $\mathrm{Cut}^1(p) \cap U_n$ is non emptry. We therefore find a sequence of covectors $\lambda_n \in \mathrm{Cut}^1(p)$ such that $\lambda_n \to \lambda_0$.
\end{proof}

\cref{density} can be used to prove that the map $\mathrm{d}(p, \cdot)^2$ is smooth in a neighbourhood of $q$ if and only if there is a unique length-minimiser $\gamma$ joining $p$ to $q$ and $q$ is not conjugate to $p$ along $\gamma$. In the non-ideal case, one may bypass this type of argument to study the regularity of the sub-Riemannian squared distance (see \cite[Chapter 11]{agrachev2020}).

\subsection{Final remarks}
\label{finalremarks}
\begin{enumerate}[label=\arabic*)]
    \item \cite{klingenberg1995} gives an inconclusive argument towards non-local injectivity of the Riemannian exponential map at critical points. Namely  \cite[2.1.12 Theorem]{klingenberg1995} argues indirectly, assuming that $\exp_p$ is injective in a neighbourhood of a conjugate vector $v$ (and thus a homeomorphism by invariance of domain). A curve $b$ that has the same endpoints as the geodesic $c$ along which $p$ is conjugate to $\exp_p(v)$ is constructed so that one may use standard arguments using Jacobi fields to conclude that $\mathrm{L}(b) < \mathrm{L}(c)$. The final step consists in defining the curve $\tilde{b} := \exp_p^{-1}(b)$ in $\mathrm{T}_p(M)$ and to invoke a corollary of Gauss' lemma from \cite[1.9.2 Theorem]{klingenberg1995} to conclude that $\mathrm{L}(b) \geq \mathrm{L}(c)$. However, this very last step can only be performed if $\tilde{b}$ is (piecewise) smooth. However, $\exp_p$ needs to be only a homeomorphism. This is in fact the heart of the problem that needs to be overcome, from the orginal arguments of Morse - Littauer, and Savage (\cite{morselittauer1932}, \cite{savage1943}), to the one of Warner (\cite{warner1965}), and indeed the present work.
    \item Even if the sub-Riemannian manifold $M$ is ideal, it seems possible that a geodesic has a conjugate point of infinite order. Finding a relevant example could help to fully understand the structure of the conjugate locus. Is it possible to construct an example for which $\mathrm{Conj}(p)$ contains only conjugate covectors of infinite order?
    \item The proof of \cref{noninjthm} relies on the submanifold structure of $\mathrm{Conj}(p)$ near a regular conjugate covector of finite order. In the case of infinite order, can we still find such a manifold structure?
    \item Warner's proof of \cref{noninjthm} is different to what we have done here. At the core of Warner's proof is Whitney's singularity theorem to find the normal forms of the exponential map near a conjugate vector. What are the normal forms of the sub-Riemannian exponential map? Due to the reliance of this argument on the order of the conjugate value, there might not be a general answer to that question. Rather each class of examples could be investigated. We have pursued this approach for the Heisenberg group in \cite{borklin} and we expect it to be successful in other cases, possibly in a similar way to what has been done for the 3D contact case in \cite[Chapters 17 to 19]{agrachev2020} (and the references therein). In \cite{BBCN2017}, the authors classify the singularities of the sub-Riemannian exponential for low-dimensional generic structures.
    \item The study of conjugate points along abnormal geodesics is completely open.
\end{enumerate}                % Application

\nocite{*}
\printbibliography
 
\end{document}

Draft

\subsection{Family of normal extremals}

\begin{theorem}{\cite[Proposition 8.42]{agrachev2020}}
Let $p \in M$, $\lambda_0 \in \mathrm{T}^*_{p}(M)$ and $\lambda_1 := e^{\overrightarrow{H}}(\lambda_0)$. Then, for every $w \in \mathrm{T}^*_{p}(M) \cong \mathrm{T}_{\lambda_0}(\mathrm{T}^*_{p}(M))$, we have
\[
\langle \lambda_1, \mathrm{d}_{\lambda_0} \mathrm{exp}_p(w) \rangle = 2 H(\lambda_0, w),
\]
where $H$ is the symmetric bilinear form associated with the maximised Hamiltonian
\[
H(\lambda, \eta) = \frac{1}{2} \sum_{k,j = 1}^n \langle \lambda, X_k \rangle \langle \eta, X_j \rangle.
\]
\end{theorem}

A point $p \in M$ and a covector $\lambda_0 \in \mathrm{T}^*_p(M) \setminus H_p^{-1}(0)$ uniquely defines a non-trivial constant speed normal geodesic $\gamma(t) := \mathrm{exp}_p(t \lambda_0)$ with lift $\lambda(t) := \mathrm{e}^{t \overrightarrow{H}}(\lambda_0)$. Suppose the normal geodesic $\gamma$ does not contain any abnormal segment. Then, we can find a small enough $s \in \interval{0}{1}$ such that $s \lambda_0$ is not conjugate (see \cite[Corollary 8.51]{agrachev2020}). Furthermore, there is a small neighbourhood $\mathcal{U} \subseteq \mathrm{T}^*_p(M)$ around $s \lambda_0$ that does not contain any singularity of $\mathrm{exp}_p$.

Then by Gauss' lemma, the covector $e^{\overrightarrow{H}}(\lambda_0)$ annihilates $\mathrm{exp}_p(\mathcal{U})$ for every $\lambda_0 \in \mathcal{U}$. Therefore, we have a natural family of normal extremals on $\mathcal{U}$, in the sense of \cref{familyextremals}, defined as $\lambda(\lambda_0, t) := e^{t\overrightarrow{H}}(\lambda_0)$. This family is initially transversal to $\Sigma := \left\{ 1 \right\} \times \mathrm{exp}_p(\mathcal{U})$.

\begin{definition}
	\label{familyextremals}
	A family of normal extremals parametrised on a manifold $A$ is given by smooth maps $t_0, t_1 : A \to \mathds{R}$, and $F : D \to \mathrm{T}^*(M)$ such that
	\begin{enumerate}
		\item For all $\alpha \in A$, $t_0(\alpha) < t_1(\alpha)$;
		\item $D = \left\{ (\alpha, t) \mid t_0(\alpha) < t < t_1(\alpha) \right\}$;
		\item For all $\alpha \in A$, the curve $F(\alpha, \cdot) : \interval{t_0(\alpha)}{t_1(\alpha)} \to \mathrm{T}^*(M)$ is a normal extremal.
	\end{enumerate}
	We say that the family is initially transversal to a submanifold $\Sigma \subseteq \mathds{R} \times M$ if $(t_0(\alpha), \lambda(\alpha, t_0(\alpha)))$ is transversal to $\Sigma$ for every $\alpha \in A$. 
\end{definition}

\begin{definition}
	Let $M$ be a sub-Riemannian manifold and $F : D \to \mathrm{T}^*(M)$ a family of normal extremals. We say that $F$ simply covers $(t_0, \alpha_0) \in \mathds{R} \times M$ if there exists a neighbourhood $\mathcal{O}$ of $(t_0, F(\alpha_0, t_0))$ such that
	\[
	\mathcal{O} \to \mathds{R} \times M : (t, \alpha) \mapsto (t, (\pi \circ F)(\alpha, t))
	\]
	is injective.
\end{definition}

\begin{proposition}
	\label{famnormextr}
	Let $M$ be an ideal sub-Riemannian manifold, $p \in M$ and a first conjugate covector $\lambda_0 \in \mathscr{A}_p \subseteq \mathrm{T}^*_p(M)$. Consider an open neighbourhood $\mathcal{U} \subseteq \mathscr{A}_p$ of $\alpha_0 := s \lambda_0$, for some $s \in \ointerval{0}{1}$, that does not contain any conjugate covector. The map
	\[
	F_{p, \lambda_0, s} : D \to \mathrm{T}^*(M) : (\alpha, t) \mapsto \mathrm{e}^{\frac{t}{s} \overrightarrow{H}}(\mathrm{exp}_p^{-1}(\alpha)),
	\]
	defines a family of normal extremals parametrised on $A := \mathrm{exp}_p(\mathcal{U})$. Moreover, this family is initially transversal to $\left\{1\right\} \times \mathrm{exp}_p(\mathcal{U})$. We also have that $F(\alpha_0, t) = \mathrm{e}^{(t + s) \overrightarrow{H}}(\lambda_0)$.
\end{proposition}

\begin{proposition}
	Let $M$ be an ideal sub-Riemannian manifold and $p \in M$. The exponential map $\mathrm{exp}_p$ is injective in a neighbourhood of a covector $\lambda_0 \in \mathrm{T}^*_p(M)$ if and only if $F_{p, \lambda_0, s}$ simply covers $(t_0, F(\alpha_0, t_0))$, where $t_0 := 1$ and $\alpha_0 := \mathrm{exp}_p^{-1}(s \lambda_0)$.
\end{proposition}

\begin{proof}
	Assume that the sub-Riemannian exponential map $\exp_p$ is injective in an neighbourhood $\mathcal{U}$ of a covector $\lambda_0 \in \mathscr{A}_p \subseteq \mathrm{T}^*_p(M)$. Take $\mathcal{O} := B_\epsilon(t_0) \times \mathcal{U}$ for $\epsilon > 0$ as small as needed and let us prove that
	\[
	G : \mathcal{O} \to \mathds{R} \times M : (t, \alpha) \mapsto (t, (\pi \circ F)(\alpha, t))
	\]
	is injective. Consider $(t_1, \alpha_1), (t_2, \alpha_2) \in \mathcal{O}$ and suppose that $(t_1, (\pi \circ F)(\alpha_1, t_1)) = (t_2, (\pi \circ F)(\alpha_2, t_2))$ which means that $t_1 = t_2$ and $(\pi \circ F)(\alpha_1, t_1) = (\pi \circ F)(\alpha_2, t_2))$, i.e.
	\[
	\mathrm{exp_p}(\frac{t_1}{s} \mathrm{exp}_p^{-1}(\alpha_1)) = \mathrm{exp_p}(\frac{t_2}{s} \mathrm{exp}_p^{-1}(\alpha_2)).
	\]
\end{proof}

%\begin{theorem}
%\label{embeddingtheorem}
%Let $\lambda : \interval{t_0}{t_1} \to \mathrm{T}^*(M)$ be a normal extremal from $p \in M$ with initial covector $\lambda_0 \in \mathrm{T}^*_p(M)$. If the extremal is initially transversal to a hypersurface $\Sigma \subseteq \mathds{R} \times M$, then there exist a family of normal extremals $\lambda(\alpha, \cdot) : \interval{t_0(\alpha)}{t_1(\alpha)} \to \mathrm{T}^*(M)$ parametrised on an open set $0 \in A \subseteq \mathds{R}^n$, initially transversal to $\Sigma$, and such that $\lambda(0, \cdot) = \lambda$.
%\end{theorem}

%\begin{remark}
%The family is ODE canonical.
%\end{remark}

Simply covering, simply covering corresponds to injectivity of the exponential map.

Two definitions of Hilbert integrals

The first one equals the length functional for one of the extremal of the family.

The regular conjugate locus is a submanifold.

The second is independent of path.

Let $M$ be a sub-Riemannian manifold, $p \in M$ and $\lambda_0 \in \mathscr{A}_p$. Consider a family of normal extremals $F_{p, \lambda_0, s}$ as described in ???. Consider $\mathcal{O}$ an open neighbourhood containing $(1, \lambda_0) \in \mathds{R} \times \mathrm{T}^*_p(M)$. We may define a one-form $\theta \in \Lambda^1(\mathcal{O})$:
\[
\theta_{(t, \lambda)} : \mathrm{T}_{(t, \lambda)}(\mathds{R} \times \mathrm{T}^*_p(M)) \to \mathds{R} : (s, w) \mapsto \langle \lambda, \mathrm{d}_{(p, \lambda)}\pi (w) \rangle - H(p, \lambda) \mathrm{d}t(t, s)
\]
\[
\pi : \mathrm{T}^*(M) \to M, \ \ \mathrm{d}_{(p, \lambda)}\pi : \mathrm{T}_{(p,\lambda)}(\mathrm{T}^*(M)) \to \mathrm{T}_{p}(M)
\]
\[
f : M \to \mathds{R}, \ \ \mathrm{d}f : \mathrm{T}(M) \to \mathds{R}, \mathrm{d}f(p, v) := \mathrm{d}_p f(v) := v(f)[p].
\]

Consider the map
\[
\mathrm{Exp}_p : \mathcal{O} \to \mathds{R} \times M : (t, \lambda_0) \mapsto (t, \mathrm{exp}_p(t \lambda_0)),
\]
where $\mathcal{O}$ is the maximal (open) domain of definition in $\mathds{R} \times \mathrm{T}^*_p(M)$. Let $\mathcal{V}$ be an open neighbourhood of $(1, \overline{\lambda}_0)$ in $\mathcal{O}$.

\begin{definition}[Hilbert integral in the domain $\mathcal{O}$]
	We define the Poincaré-Cartan one-form in the domain $\mathcal{O}$ as
	\[
	\theta_{(t, \lambda_0)}(v \mathrm{d}t, w) = \langle \mathrm{e}^{t\overrightarrow{H}}(\lambda_0), \pi(\mathrm{e}^{t\overrightarrow{H}}(\lambda_0)) \rangle - H(\mathrm{e}^{t\overrightarrow{H}}(\lambda_0)) v
	\]
	\[
	\theta = s \circ \mathrm{e}^{\cdot \overrightarrow{H}}(p, \cdot) - H(\mathrm{e}^{\cdot \overrightarrow{H}}(p, \cdot)) \mathrm{d}t
	\]
	This defines an line integral, along any smooth curve $\Gamma^*(s) = (t(s),\lambda_0(s))$ in $\mathcal{O}$ by
	\[
	I^*[\Gamma^*] = \int_{\Gamma^*} \theta = \int_{t_0}^{t_1} \left[\langle \mathrm{e}^{t(s)\overrightarrow{H}}(\lambda_0(s)), \pi(\mathrm{e}^{t(s)\overrightarrow{H}}(\lambda_0(s))) \rangle - H(\mathrm{e}^{t(s)\overrightarrow{H}}(\lambda_0(s))) \dot{t}(s)\right]\mathrm{d}s
	\]	
\end{definition}

\begin{proposition}
    If $\Gamma(s) = (s, \lambda_0)$, then
    \[
    I^*[\Gamma] = \mathrm{L}(\gamma),
    \]
    where $\gamma(s) = \mathrm{exp}_p(s \lambda_0)$.
\end{proposition}

\begin{proof}
By definition of the Hilbert integral, we have that
\[
I^*[\Gamma] = \int_{t_0}^{t_1} \langle \lambda(s), \dot{\gamma}(s) \rangle - H(\lambda(s)) \mathrm{d}s,
\]
where $\lambda(s) := \mathrm{e}^{s \overrightarrow{H}}(p, \lambda_0)$ for $i = 1, \cdots, m$. Then, by Pontryagin maximum's principle, we express the maximised Hamiltonian as
\[
H(\lambda(s)) = \langle \lambda(s), \dot{\gamma}(s) \rangle - \frac{1}{2} |u(s)|^2
\],
where $u_i(s) = \langle \lambda(s), X_i(\gamma(s)) \rangle$ is the minimal control of $\gamma(s) = \mathrm{exp}_p(s \lambda_0)$. Therefore,
\[
I^*[\Gamma] = \int_{t_0}^{t_1} \langle \lambda(s), \dot{\gamma}(s) \rangle - \left( \langle \lambda(s), \dot{\gamma}(s) \rangle - \frac{1}{2} |u(s)|^2 \right) = \frac{1}{2} \int_{t_0}^{t_1} |u(s)|^2 = \mathrm{L}(\gamma|_{\interval{t_0}{t_1}}).
\]
\end{proof}

\begin{proposition}
The from $\theta$ is exact, i.e. the Hilbert integral $I^*$ is independent of path.
\end{proposition}

The Poincaré-Cartan one-form on $\mathrm{T}^*(M) \times \mathds{R}$ is defined by
\[
\theta - H \mathrm{d}t \in \Lambda^1(\mathrm{T}^*(M) \times \mathds{R}),
\]
where $\theta \in \Lambda^1(\mathrm{T}^*(M))$ denotes the tautological one-form of the cotangent bundle.

We consider the map $\mathrm{Exp}_p : \mathscr{O}_p \to M \times \mathds{R} : (\lambda_0, t) \mapsto (\mathrm{exp}_p(t \lambda_0), t)$ and the map $\Phi_p : \mathscr{O}_p \to \mathrm{T}^*(M) \times \mathds{R} : (\lambda_0, t) \mapsto (\mathrm{e}^{t \overrightarrow{H}}(p, \lambda_0), t)$.

\begin{definition}
	The Hilbert integral $I^*$ on $\mathscr{O}_p$ is the line integral obtained by integrating the one-form $\alpha_p := \Phi_p^*(\theta - s \mathrm{d}t) \in \Lambda^1(\mathscr{O}_p)$. In other words, if $\Gamma^*$ is a smooth curve in $\mathscr{O}_p$, then
	\[
	I^*[\Gamma^*] = \int_{\Gamma^*} \alpha_p =\int_{\Gamma^*} \Phi_p^*(\theta - s \mathrm{d}t).
	\]
\end{definition} 

Consider the Poincaré-Cartan form $\theta - H \mathrm{d}t \in \Lambda^1(\mathrm{T}^*(M) \times \mathds{R})$ and fix $p \in M$, it induces a form on
\[
(\theta - H \mathrm{d}t)|_{(q, \lambda_1, t)} : \mathrm{T}_{(q, \lambda_1)}(\mathrm{T}^*(M)) \times \mathds{R} \to \mathds{R}
\]

\begin{definition}
	The smooth Poincaré-one form $\alpha_p$ on $\overline{\mathscr{O}}_p$ is defined as $\alpha_p := \Phi_p^*(\theta - s \mathrm{d}t) \in \Lambda^1(\overline{\mathscr{O}}_p)$. The Hilbert integral $I^*$ on $\overline{\mathscr{O}}_p$ is the line integral obtained by integrating the one-form $\alpha_p$. In other words, if $\Gamma^*$ is a smooth curve in $\mathscr{O}_p$, then
	\[
	I^*[\Gamma^*] = \int_{\Gamma^*} \alpha_p =\int_{\Gamma^*} \Phi_p^*(\theta - s \mathrm{d}t).
	\]
\end{definition}

Let $p \in M$, $\overline{\lambda}_0 \in \mathscr{A}_p \subseteq \mathrm{T}^*_p(M)$ and $\mathscr{O}_p \subseteq \mathrm{T}^*_p(M) \times \mathds{R}$ open neighbourhood of $(\overline{\lambda}_0, 1)$ such that $\mathrm{Exp}_p : \mathscr{O}_p \to M \times \mathds{R}$ is injective. Let $(\lambda_0, t) \in \mathscr{O}_p$ and $v \in \mathrm{T}_{(\lambda_0, t)}(\mathscr{O}_p) \cong \mathrm{T}_{(\lambda_0, t)}(\mathrm{T}^*_p(M) \times \mathds{R})$. We have
\[
(\alpha_p)_{(\lambda_0, t)}(v) = (\theta - H \mathrm{d}t)_{(\mathrm{e}^{t \overrightarrow{H}}(p, \lambda_0), t)}(\mathrm{d}_{(\lambda_0, t)}\Phi_p(v)).
\]
Now, write $v = w + v_0 \tfrac{\partial}{\partial t}(\lambda_0, t)$ for unique $v_0 \in \mathds{R}$ and $w \in \mathrm{T}_{\lambda_0}(\mathrm{T}^*_p(M))$ identified with a subspace of $\mathrm{T}_{(\lambda_0, t)}(\mathrm{T}^*_p(M) \times \mathds{R})$. We can then express $\mathrm{d}_{(\lambda_0, t)}\Phi_p(v) \in \mathrm{T}_{(\mathrm{e}^{t \overrightarrow{H}}(p, \lambda_0), t)}(\mathrm{T}^*(M) \times \mathds{R})$ as
\[
\mathrm{d}_{(\lambda_0, t)}\Phi_p(v) = \mathrm{d}_{\lambda_0} \mathrm{e}^{t \overrightarrow{H}}(p, \cdot)(w) + v_0 \left(\frac{\partial}{\partial t}(\mathrm{e}^{t \overrightarrow{H}}(p, \lambda_0), t) + \overrightarrow{H}(\mathrm{e}^{t \overrightarrow{H}}(p, \lambda_0))\right).
\]
This leads to
\begin{align}
	\label{formulaalphap}
	(\alpha_p)_{(\lambda_0, t)}(v) &= \theta_{\mathrm{e}^{t \overrightarrow{H}}(p, \lambda_0)}(\mathrm{d}_{\lambda_0} \mathrm{e}^{t \overrightarrow{H}}(p, \cdot)(w) + v_0 \overrightarrow{H}(\mathrm{e}^{t \overrightarrow{H}}(p, \lambda_0))) \nonumber\\
	& \qquad - H(\mathrm{e}^{t \overrightarrow{H}}(p, \lambda_0)) v_0 \nonumber\\
	& = \langle \mathrm{e}^{t \overrightarrow{H}}(p, \lambda_0), \mathrm{d}_{\mathrm{e}^{t \overrightarrow{H}}(p, \lambda_0)}\pi \circ \mathrm{d}_{\lambda_0} \mathrm{e}^{t \overrightarrow{H}}(p, \cdot)(w) \rangle \nonumber\\
	& \qquad + v_0 \left[ \langle \mathrm{e}^{t \overrightarrow{H}}(p, \lambda_0), \mathrm{d}_{\mathrm{e}^{t \overrightarrow{H}}(p, \lambda_0)}\pi \circ \overrightarrow{H}(\mathrm{e}^{t \overrightarrow{H}}(p, \lambda_0)) \rangle - H(\mathrm{e}^{t \overrightarrow{H}}(p, \lambda_0)) \right] \nonumber\\
	&= v_0 \left[ \langle \mathrm{e}^{t \overrightarrow{H}}(p, \lambda_0), \mathrm{d}_{\mathrm{e}^{t \overrightarrow{H}}(p, \lambda_0)}\pi \circ \overrightarrow{H}(\mathrm{e}^{t \overrightarrow{H}}(p, \lambda_0)) \rangle - H(\mathrm{e}^{t \overrightarrow{H}}(p, \lambda_0)) \right].
\end{align}

\begin{proposition}
	\label{alphapclosed}
	The one-form $\alpha_p$ on $\mathscr{O}_p$ is closed. In particular, the Hilbert integral $I^*$ is homotopic invariant.
\end{proposition}

We prove first that $\alpha_p$ is closed on $\mathscr{O}_p$ by showing that $\mathrm{d}\alpha_p$ vanishes identically. We have
	\begin{align*}
		\mathrm{d}\alpha_p &= \mathrm{d}\left[\Phi_p^*(\theta - H \mathrm{d}t)\right] = \Phi_p^*\left[\mathrm{d}(\theta - H \mathrm{d}t)\right] = \Phi_p^*\left[\mathrm{d}(\theta - H \mathrm{d}t)\right] \\
		&= \Phi_p^*\left[\mathrm{d}\theta - \mathrm{d}H \wedge \mathrm{d}t\right] = \Phi_p^*\left[\omega - \mathrm{d}H \wedge \mathrm{d}t\right],
	\end{align*}
	where $\omega$ denotes the Poincaré two-form on $\mathrm{T}^*(M)$. If $(\lambda_0, t) \in \mathscr{O}_p$ and $v_1, v_2 \in \mathrm{T}_{(\lambda_0, t)} (\mathscr{O}_p)$, then
	\begin{align*}
		(\mathrm{d}\alpha_p)_{(\lambda_0, t)}(v_1, v_2) &= \Phi_p^*\left[\omega - \mathrm{d}H \wedge \mathrm{d}t\right]_{(\lambda_0, t)}(v_1, v_2) \\
		&= \left[\omega - \mathrm{d}H \wedge \mathrm{d}t\right]_{\Phi_p(\lambda_0, t)}(\mathrm{d}_{(\lambda_0, t)}\Phi_p (v_1), \mathrm{d}_{(\lambda_0, t)}\Phi_p(v_2))\\
		&= \left[\omega - \mathrm{d}H \wedge \mathrm{d}t\right]_{(t, \mathrm{e}^{t \overrightarrow{H}}(p, \lambda_0))}(\mathrm{d}_{(\lambda_0, t)}\Phi_p (v_1), \mathrm{d}_{(\lambda_0, t)}\Phi_p(v_2)).
	\end{align*}
If we let $v_i = w_i + s_i \frac{\partial}{\partial t}(\lambda_0, t)$, where $s_i \in \mathds{R}$ and $w_i \in \mathrm{T}_{\lambda_0}(\mathrm{T}^*_p(M)) \subseteq \mathrm{T}_{(\lambda_0, t)}(\mathrm{T}^*_p(M) \times \mathds{R})$, then
\[
\mathrm{d}_{(\lambda_0, t)}\Phi_p (v_i) = \mathrm{d}_{(p, \lambda_0)} \mathrm{e}^{t \overrightarrow{H}} \circ \mathrm{d}_{\lambda_0} \iota_{p} (w_i) + s_i \left(\frac{\partial}{\partial t}(\mathrm{e}^{t \overrightarrow{H}}(p, \lambda_0), t) + \overrightarrow{H}(\mathrm{e}^{t\overrightarrow{H}}(p, \lambda_0)) \right).
\]
Here, we have denoted $\iota_p : \mathrm{T}^*_p(M) \to \mathrm{T}^*(M)$ defined by $\iota_p(\lambda_0) = (p, \lambda_0)$. Without loss of generality, we may identify the two following cases: $s_1 = s_2 = 0$, and $s_1 = 0$ while $s_2 \neq 0$.

In the first case, we have
\begin{align*}
    (\mathrm{d}\alpha_p)_{(\lambda_0, t)}(v_1, v_2) &= \omega_{\mathrm{e}^{t \overrightarrow{H}}(p, \lambda_0)}(\mathrm{d}_{(p, \lambda_0)} \mathrm{e}^{t \overrightarrow{H}} \circ \mathrm{d}_{\lambda_0} \iota_{p} (w_1), \mathrm{d}_{(p, \lambda_0)} \mathrm{e}^{t \overrightarrow{H}} \circ \mathrm{d}_{\lambda_0} \iota_{p} (w_2)) \\
    &= (\mathrm{e}^{t \overrightarrow{H}}*\omega)_{(p, \lambda_0)}(\mathrm{d}_{\lambda_0} \iota_{p} (w_1), \mathrm{d}_{\lambda_0} \iota_{p} (w_2)) = \omega_{(p, \lambda_0)}(\mathrm{d}_{\lambda_0} \iota_{p} (w_1), \mathrm{d}_{\lambda_0} \iota_{p} (w_2)) \\
    &= \iota_p^*\omega_{\lambda_0}(w_1, w_2) = \iota_p^*(\mathrm{d}\theta)_{\lambda_0}(w_1, w_2) = \mathrm{d}\iota_p^*(\theta)_{\lambda_0}(w_1, w_2) = 0
\end{align*}
since $\pi \circ \iota_p = p$ and thus $\mathrm{d}_{(p, \lambda_0)}\pi \circ \mathrm{d}_{\lambda_0} \iota_p = 0$.

In the second case, we find
\begin{align*}
	(\mathrm{d}\alpha_p)_{(\lambda_0, t)}(v_1, v_2) &= \omega_{\mathrm{e}^{t \overrightarrow{H}}(p, \lambda_0)}\left(\mathrm{d}_{(p, \lambda_0)} \mathrm{e}^{t \overrightarrow{H}} \circ \mathrm{d}_{\lambda_0} \iota_{p} (w_1), \overrightarrow{H}(\mathrm{e}^{t\overrightarrow{H}}(p, \lambda_0)) \right) \\
	& \qquad - (\mathrm{d}H \wedge \mathrm{d}t)_{(\mathrm{e}^{t \overrightarrow{H}}(p, \lambda_0), t)}\left(\mathrm{d}_{(p, \lambda_0)} \mathrm{e}^{t \overrightarrow{H}} \circ \mathrm{d}_{\lambda_0} \iota_{p} (w_1), \frac{\partial}{\partial t}(\mathrm{e}^{t \overrightarrow{H}}(p, \lambda_0), t)\right) \\
	& \qquad - (\mathrm{d}H \wedge \mathrm{d}t)_{(\mathrm{e}^{t \overrightarrow{H}}(p, \lambda_0), t)}\left(\mathrm{d}_{(p, \lambda_0)} \mathrm{e}^{t \overrightarrow{H}} \circ \mathrm{d}_{\lambda_0} \iota_{p} (w_1), \overrightarrow{H}(\mathrm{e}^{t\overrightarrow{H}}(p, \lambda_0)) \right) \\
	&= (\mathrm{e}^{t \overrightarrow{H}}(p, \cdot)^*\mathrm{d}H)_{\lambda_0}(w_1) - (\mathrm{e}^{t \overrightarrow{H}}(p, \cdot)^*\mathrm{d}H)_{\lambda_0}(w_1) \wedge \mathrm{d}t(\overrightarrow{H}(\mathrm{e}^{t\overrightarrow{H}}(p, \lambda_0))) \\
	& \qquad + (\mathrm{d}H)_{\mathrm{e}^{t \overrightarrow{H}}(p, \lambda_0)}(\overrightarrow{H}(\mathrm{e}^{t\overrightarrow{H}}(p, \lambda_0))) \wedge \mathrm{d}t(\mathrm{d}_{(p, \lambda_0)} \mathrm{e}^{t \overrightarrow{H}} \circ \mathrm{d}_{\lambda_0} \iota_{p} (w_1)) \\
	& \qquad - (\mathrm{e}^{t \overrightarrow{H}}(p, \cdot)^*\mathrm{d}H)_{\lambda_0}(w_1) \wedge \mathrm{d}t(\frac{\partial}{\partial t}(\mathrm{e}^{t \overrightarrow{H}}(p, \lambda_0), t)) \\
	& \qquad + (\mathrm{d}H)_{\mathrm{e}^{t\overrightarrow{H}}(p, \lambda_0)}(\frac{\partial}{\partial t}(\mathrm{e}^{t \overrightarrow{H}}(p, \lambda_0), t)) \wedge \mathrm{d}t(\mathrm{d}_{(p, \lambda_0)} \mathrm{e}^{t \overrightarrow{H}} \circ \mathrm{d}_{\lambda_0} \iota_{p} (w_1)) = 0.
\end{align*}

\begin{proposition}
	The one-form $\alpha_p$ on $\mathscr{O}_p$ is exact. In particular, the Hilbert integral $I^*$ is independent of paths.
\end{proposition}

\begin{proof}
	We prove first that $\alpha_p$ is closed on $\mathscr{O}_p$. We need to compute $\mathrm{d}\alpha_p$. Recall that we have the smooth map $\Phi_p : \mathscr{O}_p \to \mathrm{T}^*(M) \times \mathds{R}$.
	\begin{align*}
		\mathrm{d}\alpha_p &= \mathrm{d}\left[\Phi_p^*(\theta - H \mathrm{d}t)\right] = \Phi_p^*\left[\mathrm{d}(\theta - H \mathrm{d}t)\right] = \Phi_p^*\left[\mathrm{d}(\theta - H \mathrm{d}t)\right] \\
		&= \Phi_p^*\left[\mathrm{d}\theta - \mathrm{d}H \wedge \mathrm{d}t\right] = \Phi_p^*\left[\omega - \mathrm{d}H \wedge \mathrm{d}t\right],
	\end{align*}
	where $\omega$ denotes the Poincaré two-form on $\mathrm{T}^*(M)$. Let $(\lambda_0, t) \in \mathscr{O}_p$ and $v_1, v_2 \in \mathrm{T}_{(\lambda_0, t)} (\mathscr{O}_p)$. Then,
	\begin{align*}
		(\mathrm{d}\alpha_p)_{(\lambda_0, t)}(v_1, v_2) &= \Phi_p^*\left[\omega - \mathrm{d}H \wedge \mathrm{d}t\right]_{(\lambda_0, t)}(v_1, v_2) \\
		&= \left[\omega - \mathrm{d}H \wedge \mathrm{d}t\right]_{\Phi_p(\lambda_0, t)}(\mathrm{d}_{(\lambda_0, t)}\Phi_p (v_1), \mathrm{d}_{(\lambda_0, t)}\Phi_p(v_2))\\
		&= \left[\omega - \mathrm{d}H \wedge \mathrm{d}t\right]_{(t, \mathrm{e}^{t \overrightarrow{H}}(p, \lambda_0))}(\mathrm{d}_{(\lambda_0, t)}\Phi_p (v_1), \mathrm{d}_{(\lambda_0, t)}\Phi_p(v_2))
	\end{align*}
	The vector $v'_i := \mathrm{d}_{(\lambda_0, t)}\Phi_p (v_i)$ are tangent to $\Phi_p(\mathscr{O}_p)$ at $(\lambda_0, t)$. For a given $t \in \mathds{R}$, the set $\mathcal{L}_t := \left\{ (t, \mathrm{e}^{t\overrightarrow{H}}(p, \lambda_0)) \mid \exists \lambda_0 \in \mathrm{T}^*_p(M), (\lambda_0, t) \in \mathscr{O}_p \right\}$ is a submanifold of $\Phi_p(\mathscr{O}_p)$ that has codimension one. In fact, $\mathcal{L}_t = \Phi_p(\mathcal{L}_0)$, where $\mathcal{L}_0 = \left\{(\lambda_0, t) \mid \exists \lambda_0 \in \mathrm{T}^*_p(M), (\lambda_0, t) \in \mathscr{O}_p \right\}$. We have therefore two cases: the two vectors $v'_1$ and $v'_2$ are tangent to $\mathcal{L}_t$, or one is tangent to $\mathcal{L}_t$ and the other one is transversal.
	
	Assume first that both $v'_1$ and $v'_2$ are tangent to $\mathcal{L}_t$ at $(t, \mathrm{e}^{t \overrightarrow{H}}(p, \lambda_0))$. Then,
	\begin{align*}
		(\mathrm{d}\alpha_p)_{(\lambda_0, t)}(v_1, v_2) &= \left[\omega - \mathrm{d}H \wedge \mathrm{d}t\right]_{(t, \mathrm{e}^{t \overrightarrow{H}}(p, \lambda_0))}(v_1', v_2') = \omega_{\mathrm{e}^{t \overrightarrow{H}}(p, \lambda_0)}(v_1', v_2') \\
		&= ((\mathrm{e}^{t \overrightarrow{H}})^*\omega)_{(p, \lambda_0)}(v_1'', v_2'') = \omega_{(p, \lambda_0)}(v_1'', v_2'') \\
		&= \mathrm{d}\theta_{(p, \lambda_0)}(v_1'', v_2'')
	\end{align*}
\end{proof}

\[
\theta_p \in \Lambda^1(\mathrm{T}^*_p(M) \times \mathds{R})
\]

If the map $\mathrm{Exp}_p : \mathcal{O}_p \to \mathds{R} \times M$ is injective in a open neighbourhood $\mathcal{V}_p$ of a point $(t, \lambda_0) \in \mathcal{O}_p$, then we may pullback the form $\theta_p$ to a one-form on $\mathcal{U}_p := \mathrm{Exp}_p(\mathcal{V}_p)$.

\begin{definition}
Assume $\mathrm{Exp}_p : \mathcal{O}_p \to \mathds{R} \times M$ is injective in a open neighbourhood $\mathcal{V}$ of a point $(t, \lambda_0) \in \mathcal{O}_p$. Then, the Hilbert integral $I$ with domain in $\mathcal{U} := \mathrm{Exp}_p(\mathcal{V}) \subseteq \mathds{R} \times M$ is defined as the line integral obtained by considering the one-form $\eta$
\[
\eta_{(t, q)}(s \mathrm{d}s, v) = \langle \mathrm{e}^{t \overrightarrow{H}}(p, \mathrm{Exp}_p^{-1}(t, q), v) \rangle - H(\mathrm{e}^{t \overrightarrow{H}}(p, \mathrm{Exp}_p^{-1}(t, q)) s.
\]
\end{definition}

\begin{definition}
	Assume that the map $\mathrm{Exp}_p : \mathscr{O}_p \to M \times \mathds{R} : (\lambda_0, t) \mapsto (\mathrm{exp}_p(t \lambda_0), t)$ is injective. The Hilbert integral $I$ on $\mathscr{V}_p := \mathrm{exp}_p(\mathscr{O}_p)$ is the line integral obtained by integrating the one-form
	\[
	\beta_p|_{(q, t)} := (\theta - H \mathrm{d}t)|_{\Phi_p(\mathrm{Exp}_p^{-1}(q, t))} : \mathrm{T}_{(q, t)}(\mathscr{V}_p) \cong \mathrm{T}_q(M) \times \mathds{R} \cong \mathrm{T}_{\mathrm{e}^{t \overrightarrow{H}}(p, \lambda_0)}(\mathrm{T}^*(M))/\mathrm{ker}(\mathrm{d}_{\mathrm{e}^{t \overrightarrow{H}}(p, \lambda_0)}(\pi)) \times \mathds{R} \to \mathds{R}.
	\]
	In other words,
	\begin{equation}
		\label{formulabetap}
		\beta_p|_{(q, t)}(w, s) = \langle \lambda_1, w\rangle - H(q, \lambda_1) s,
	\end{equation}
	where $(q, \lambda_1) := \mathrm{e}^{t \overrightarrow{H}}(p, \lambda_0)$ and $(t, \lambda_0) := \mathrm{Exp}_p^{-1}(q, t)$. If $\Gamma$ is a smooth curve in $\mathscr{V}_p$, then
	\[
	I[\Gamma] = \int_\Gamma \beta_p = \int_\Gamma (\theta - H \mathrm{d}t) \circ \Phi_p \circ \mathrm{Exp}_p^{-1}
	\]
\end{definition}

We consider the maps $\mathrm{Exp}_p : \overline{\mathscr{O}}_p \to M \times \mathds{R} : (\lambda_0, t) \mapsto (\mathrm{exp}_p(t \lambda_0), t)$ and $\Phi_p : \overline{\mathscr{O}}_p \to \mathrm{T}^*(M) \times \mathds{R} : (\lambda_0, t) \mapsto (\mathrm{e}^{t \overrightarrow{H}}(p, \lambda_0), t)$, where $\mathscr{O}_p$ is an open neighbourhood containing at most one conjugate covector $(\overline{\lambda}_0, 1) \in \mathrm{T}^*_p(M) \times \mathds{R}$. We assume that $\mathrm{Exp}_p$ is injective, so that $\mathrm{Exp}_p$ is a homeomorphism (but not necessarily a diffeomorphism). We denote $\overline{\mathscr{V}}_p = \mathrm{Exp}_p(\overline{\mathscr{O}}_p) = \overline{\mathrm{Exp}_p(\mathscr{O}_p)}$.

\begin{definition}
	The continuous Poincaré-Cartan one-form $\beta_p$ on $\overline{\mathscr{V}}_p$ is defined as
	\[
	\beta_p|_{(q, t)}(w, s) = \langle \lambda_1, w\rangle - H(q, \lambda_1) s,
	\]
	where $(q, \lambda_1) := \mathrm{e}^{t \overrightarrow{H}}(p, \lambda_0)$ and $(t, \lambda_0) := \mathrm{Exp}_p^{-1}(q, t)$. The Hilbert integral $I$ on $\overline{\mathscr{V}}_p$ is the line integral obtained by integrating the one-form $\beta_p$. If $\Gamma$ is a smooth curve in $\overline{\mathscr{V}}_p$, then
	\[
	I[\Gamma] = \int_\Gamma \beta_p = \int_\Gamma (\theta - H \mathrm{d}t) \circ \Phi_p \circ \mathrm{Exp}_p^{-1}
	\]
\end{definition}

\subsection{The sub-Riemannian conjugate locus}

In order to study the regularity of the sub-Riemmannian conjugate locus, we will need to refine the previous definition. We first express the determinant of the linearisation of $\mathrm{exp}_p$ in terms of the sub-Riemannian Jacobi fields.

If we pick a Darboux frame $(E_i(t), F_i(t))_{i = 1, \dots, n}$ along $\lambda(t) := \mathrm{e}^{t \overrightarrow{H}}(p, \lambda_0)$, then a Jacobi fields $\mathcal{J}(t)$ along $\lambda(t)$ can be written as
\[
\mathcal{J}(t) = \sum_{i = 1}^n p_i(t) E_i(t) + x_i(t) F_i(t),
\]
where the components $(p(t), x(t))$ satisfy
\[
\begin{pmatrix}
	\dot{p}(t) \\
	\dot{q}(t)
\end{pmatrix}
=
\begin{pmatrix}
	-C_1(t) & -R(t) \\
	C_2(t) & C_1(t)^* \\
\end{pmatrix}
\begin{pmatrix}
	p(t) \\
	q(t)
\end{pmatrix}
\]
for some smooth families of $n \times n$ matrices $C_1(t), C_2(t)$ and $R(t)$ where $C_2(t) = C_2(t)^*$ and $R(t) = R(t)^*$.

The space of all Jacobi fields has dimension $2 n$. The space of all Jacobi fields $\mathcal{J}(t)$ such that $\mathrm{d}_0 \pi(J(0)) = 0$, i.e. $x(0) = 0$ is a subspace of dimension $n$.

\begin{proposition}
	\[
	\det (\mathrm{d}_{t \lambda_0} \mathrm{exp}_p) = t^n \det
	\begin{pmatrix}
		q_1(t) & q_2(t) & \cdots & q_k(t) & q_{k + 1}(t) & \cdots & q_n(t) \\
	\end{pmatrix},
	\]
	where $(q_i, p_i)$ are the coordinates of the $n$ Jacobi fields vanishing at time $t = 0$. Furthermore, we can choose the $k$ first columns to span the space of Jacobi fields vanishing at $t = 0$ and $t = 1$.
\end{proposition}

\begin{proof}
	We know that $\mathrm{d}_{t \lambda_0} \mathrm{exp}_p(t w) = J(t)$, where $J(t)$ is the unique Jacobi field along $\gamma(t) := \mathrm{exp}_p(t \lambda_0)$ with initial values $(0, w)$.
\end{proof}

Write the Taylor expansion of $q_i(t)$ around $t = 1$:
\[
q_i(t) = \sum_{k = 0}^{m_i} \frac{q_i^{(k)}(1)}{k!} (t - 1)^k + o(|t - 1|^{m_i + 1}),
\]
where $m_i$ is the order of $q_i$ at $t = 1$, i.e. $q_i^{(k)}(1) = 0$ for all $k < m_i$ and $q_i^{(m_i)}(1) \neq 0$. By assumption $m_i = 0$ for all $i = k + 1, \dots, n$.

\begin{proposition}
	The order of $t = 1$ of the map $t \mapsto \det(\mathrm{d}_{t \lambda_0} \mathrm{exp}_p)$ is $m := m_1 + \cdots + m_k$, and
	\[
	\frac{\mathrm{d}^{m}}{\mathrm{d}t^{m}} \left( \det \mathrm{d}_{t \lambda_0} \mathrm{exp}_p \right)\Big|_{t = 1} = m! \det \begin{pmatrix}
		q_1^{(m_1)}(1) & \cdots & q_k^{(m_k)}(1) & q_{k + 1}(1) & \cdots & q_n(1) \\
	\end{pmatrix}
	\]
\end{proposition}

\begin{proposition}
	\label{finiteorderconstant}
	Let $\lambda_0 \in \mathrm{Conj}_R(p)$ that has finite order, then there exists a neighbourhood of $\lambda_0$ in $\mathrm{Conj}^R(p)$ such that the order of $t \mapsto \det(\mathrm{d}_{t \lambda_0} \mathrm{exp}_p)$ is at $t = 1$ is constant.
\end{proposition}

\begin{proof}
	By Malgrange's preparation theorem, there exists a factorisation
	\[
	\det (\mathrm{d}_{t \lambda_0} \mathrm{exp}_p) = c(t, \lambda_0) \cdot ((t - 1)^m + a_{m - 1}(\lambda_0) (t - 1)^{m - 1} + \cdots + a_0(\lambda_0)),
	\]
	where $c(1, \overline{\lambda}_0) \neq 0$ and $a_{m - 1}(\overline{\lambda}_0) = \dots = a_{0}(\overline{\lambda}_0) = 0$.
\end{proof}

\begin{theorem}
	The set of regular conjugate covectors of finite order is a submanifold of dimension $n - 1$.
\end{theorem}

\begin{proof}
	It is enough to prove that if $\lambda_0 \in \mathrm{Conj}^{R, < +\infty}(p)$, then there exists a neighbourhoud $\mathcal{U}$ of $\lambda_0$ in $\mathrm{T}^*_p(M)$ such that $\mathrm{Conj}^{R, < +\infty}_{\mathcal{U}}(p)$ is a submanifold of dimension $n - 1$. By \cref{finiteorderconstant}, the neighbourhood $\mathcal{U}$ can be chosen such that the order of the conjugate covectors in $\mathrm{Conj}^{R, < +\infty}_{\mathcal{U}}(p)$ have constant finite order $m \geq 1$.
	
	Consider the function
	\[
	\Delta_{m - 1} : \mathrm{T}_p^*(M) \to \mathds{R} : \lambda_0 \mapsto \frac{\mathrm{d}^{m - 1}}{\mathrm{d}t^{m - 1}} \left( \det \mathrm{d}_{t \lambda_0} \mathrm{exp}_p \right)\Big|_{t = 1}
	\]
\end{proof}

For $m \in \mathds{N}$, we define the function
\[
\Delta_m : \mathrm{T}^*_p(M) \to \mathds{R} : \lambda_0 \mapsto \frac{\mathrm{d}^{m}}{\mathrm{d}t^{m}}\Big|_{t = 1} \left( \det \mathrm{d}_{t \lambda_0} \mathrm{exp}_p \right).
\]

\begin{definition}
The order of $(p, \lambda_0) \in \mathrm{T}(M)$ is the order of $t = 1$ for the map $t \mapsto \det \mathrm{d}_{t \lambda_0} \mathrm{exp}_p$.
\end{definition}

%What is the relationship between
%\[
%\frac{\mathrm{d}^{m}}{\mathrm{d}t^{m}}\Big|_{t = 1} \left( \det \mathrm{d}_{t \lambda_0} \mathrm{exp}_p \right)
%\]
%and
%\[
%\frac{\mathrm{d}}{\mathrm{d}s}\Big|_{s = 1}\left(\frac{\mathrm{d}^{m - 1}}{\mathrm{d}t^{m - 1}}\Big|_{t = 1} \left( \det \mathrm{d}_{t s \lambda_0} \mathrm{exp}_p \right)\right) ??
%\]

%We should have:
%\[
%\frac{\mathrm{d}}{\mathrm{d}s}\Big|_{s = 1}\left(\frac{\mathrm{d}^{m - 1}}{\mathrm{d}t^{m - 1}}\Big|_{t = 1} \left( \det \mathrm{d}_{t s \lambda_0} \mathrm{exp}_p \right)\right) = (m - 1) \frac{\mathrm{d}^{m - 1}}{\mathrm{d}t^{m - 1}}\Big|_{t = 1} \left( \det \mathrm{d}_{t \lambda_0} \mathrm{exp}_p \right) + \frac{\mathrm{d}^{m}}{\mathrm{d}t^{m}}\Big|_{t = 1} \left( \det \mathrm{d}_{t \lambda_0} \mathrm{exp}_p \right)
%\]

Claim: Let $b \in N$. If $\sharp T^{-1}(b) = +\infty$, then $T^{-1}(b) \cap \mathrm{Crit}(T) \neq \emptyset$.

Let $b \in N$ such that $\sharp T^{-1}(b) = +\infty$. $T^{-1}(b)$ is closed. Therefore, if $a \in T^{-1}(b)$, there exists a sequence $(a_n)_n \in T^{-1}(b)$ such that $a_n \to a$. Direction:
\[
\xi_n := \frac{a_n - a_0}{\|a_n - a_0\|} \to \xi.
\]
We have $T(a_n) = T(a) = b$. Let $f(t) = T(a + t(a_n - a_0))$, $t \in \interval{0}{1}$. By the mean value theorem, there must exist $t_n \in \ointerval{0}{1}$ such that
\[
T(a_n) = T(a_0) + \sum_{k = 1}^n \partial_k T(a + t_n(a_n - a_0)) (a_n - a_0).
\]
By taking the limit $n \to +\infty$, we obtain
\[
\partial_k T(a) \xi = 0
\]
which implies that $a \in \mathrm{Crit}(T)$. By Sard's theorem $\mathrm{Crit}(T)$ has measure zero.

Let $b \in N$ such that $T^{-1}(b)$ is infinite and $a \in T^{-1}(b)$. We want to show that $a$ is a critical point of $T$. Since $T^{-1}(b)$ is closed, there exists a sequence of point $a_n$ (assumed distinct from $a$) such that $a_n \to a$. 

\begin{theorem}
\label{Ipathind}
Let $\mathscr{O}_p$ be a neighbourhood of $(\lambda_0, 1)$ in $\mathrm{T}_p^*(M)$ where $\lambda_0$ is a regular conjugate covector that has finite order. If the map $\mathrm{Exp}_p : \mathscr{O}_p \to M \times \mathds{R}$ is injective, then the Hilbert integral $I$ is path independent.
\end{theorem}

\begin{proof}
By \cref{hilbint1exact}, the Hilbert integral $I^*$ is path independent. We would like to prove the same thing for $I$. However, when $\Gamma(s) = (t(s), q(s))$ is a closed curve in $\mathscr{O}_p$, we do not have in general that $\Gamma^*(s) = (t(s), \lambda_0(s)) := \mathrm{Exp}_p^{-1}(\Gamma(s))$ is regular enough to evaluate the line integral $I^*$ along $\Gamma$.

The path $\Gamma^*(s)$ will be smooth on the complement of $\mathrm{Conj}_{\mathcal{U}}(p)$, which is a smooth submanifold by \cref{locusmanifold}. Thus we may assume without loss of generality that does not intersect the conjugate locus $\mathrm{Conj}_{\mathcal{U}}(p)$ on a compact interval. Now, it may still intersect the locus an infinite number of times. However, by \cref{savagelemma}, we may approximate $\Gamma^*$ in coordinates, by a sequence of polygons $(p_n)_{n \in \mathds{N}}$ that intersect the conjugate locus a finite number of times. Finally,
\[
I[\Gamma] = \lim_{n \to +\infty} I^*[p_n] = 0.
\]
\end{proof}

\textbf{If Hilbert integral is independent, then it is a weak minimum.}

\begin{theorem}[{\cite[Theorem 8.61.]{agrachev2020}}]
\label{conjptnotmin}
Let $\gamma : \interval{0}{T} \to M$ be a normal extremal that does not contain abnormal segments.
\begin{enumerate}[label=\normalfont(\roman*)]
\item if $\gamma$ has no conjugate point to $\gamma(0)$ then it is a local minimiser with respect to the $C^0$ or $W^{1,2}$ topology on the space of admissible trajectories with the same endpoints,
\item if $\gamma$ has at least a conjugate point to $\gamma(0)$ then it is not a local minimiser with respect to the $W^{1, 2}$ topology on the space of admissible trajectories with the same endpoints.
\end{enumerate}
\end{theorem}

\begin{definition}
	\label{familyextremals}
	A family of normal extremals parametrized on a manifold $A$ is given by smooth maps $t_0, t_1 : A \to \mathds{R}$, and $F : D \to \mathrm{T}^*(M)$ such that
	\begin{enumerate}
		\item For all $\alpha \in A$, $t_0(\alpha) < t_1(\alpha)$;
		\item $D = \left\{ (\alpha, t) \mid t_0(\alpha) < t < t_1(\alpha) \right\}$;
		\item For all $\alpha \in A$, the curve $F(\alpha, \cdot) : \interval{t_0(\alpha)}{t_1(\alpha)} \to \mathrm{T}^*(M)$ is a normal extremal.
	\end{enumerate}
	We say that the family is initially transversal to a submanifold $\Sigma \subseteq \mathds{R} \times M$ if $(t_0(\alpha), \lambda(\alpha, t_0(\alpha)))$ is transversal to $\Sigma$ for every $\alpha \in A$. 
\end{definition}

A point $p \in M$ and a covector $\lambda_0 \in \mathrm{T}^*_p(M) \setminus H_p^{-1}(0)$ uniquely defines a non-trivial constant speed normal geodesic $\gamma(t) := \mathrm{exp}_p(t \lambda_0)$ with lift $\lambda(t) := \mathrm{e}^{t \overrightarrow{H}}(\lambda_0)$. Suppose the normal geodesic $\gamma$ does not contain any abnormal segment. Then we may find a sufficiently small $s \in \interval{0}{1}$ such that $s \lambda_0$ is not conjugate (see \cite[Corollary 8.51]{agrachev2020}). Furthermore, there is a small neighbourhood $\mathcal{U} \subseteq \mathrm{T}^*_p(M)$ around $s \lambda_0$ that does not contain any singularity of $\mathrm{exp}_p$.

Then by Gauss' lemma, the covector $e^{\overrightarrow{H}}(\lambda_0)$ annihilates $\mathrm{exp}_p(\mathcal{U})$ for every $\lambda_0 \in \mathcal{U}$. Therefore, we have a natural family of normal extremals on $\mathcal{U}$, in the sense of \cref{familyextremals}, defined as $\lambda(\lambda_0, t) := e^{t\overrightarrow{H}}(\lambda_0)$. This family is initially transversal to $\Sigma := \left\{ 1 \right\} \times \mathrm{exp}_p(\mathcal{U})$.

\begin{proposition}[see \cite{comprehensive2020}, Theorem 4.25]
	Let $\lambda : \interval{0}{T} \to \T^*(M)$ be a normal extremal, that is a solution to Hamilton's equation $\dot{\lambda} = \overrightarrow{H}(\lambda)$. The corresponding normal geodesic $\gamma(t) = \pi(\lambda(t))$ has constant speed and $\frac{1}{2} \| \dot{\gamma} (t) \|^2 = H(\lambda(t))$ for every $t \in \interval{0}{T}$.
\end{proposition}

\subsection{The exponential map via Pontryagin's maximum principle}

In order to study length minimizers for the length functional, the sub-Riemannian minimization problem is formulated as an optimal control system. The extremals are characterized by  Pontryagin's maximum principle. In fact it can is shown in \cite[Lemma 3.64]{agrachev2020} that a horizontal curve minimizes the energy functional if and only if it is is parametrized by constant speed and minimizes the length functional.

\begin{definition}
	Let $M$ be a sub-Riemannian manifold and $N$ a submanifold of $\mathds{R} \times M$. We say that a control is optimal with initial conditions in $N$ if it is a minimum of the sub-Riemannian energy function among all admissible controls $u : \interval{t_0}{t_1} \to E$ satisfying the condition $(t_0, \pi_E \circ u(t_0)) \in N$.
\end{definition}

Pontryagin's maximum principle consists of necessary conditions for a control to be optimal. 

\begin{definition}[Hamiltonian functions]
	For $k \in \linterval{-\infty}{0}$, an extended Hamiltonian function is defined as
	\[
	H_k : \mathrm{T^*}M \oplus E \to \mathds{R} : (q, \lambda, u) \mapsto \frac{k}{2} \sum_{i = 1}^{m} u^2_i + \sum_{i = 1}^m u_i \langle \lambda , X_i(q) \rangle.
	\]
	For every $\lambda \in T_qM $, the maximized Hamiltonian is the function of class $\mathcal{C}^2$ defined by 
	\[H^{\max}_k(q, \lambda) := \max_{u \in \mathds{R}^m} H_k(q, \lambda, u).
	\]
	For $k = -1$, we simply write $H:=H^{\max}_{-1} : \mathrm{T}^*M \to \mathds{R}$.
\end{definition}

\begin{theorem}
    \label{hamiltonmaxp}
	Let $M$ be a sub-Riemannian manifold and consider a control $u : \interval{t_0}{t_1} \to E$ such that its projection $\gamma := \pi_E(u) : \interval{t_0}{t_1} \to E$ is horizontal, parametrised by constant speed, and minimises the sub-Riemannian length functionial. There exist a constant $k \in \left\{0, -1\right\}$, and an absolutely continuous curve $\lambda : \interval{t_0}{t_1} \to \mathrm{T}^*(M)$ along $\gamma$ such that
	\begin{enumerate}[label=\normalfont(\roman*)]		\item The curve $\lambda$ satisfies the adjoint equation

		\[
		\dot{\lambda}(t) = \overrightarrow{H_k}( \gamma(t), \lambda(t), u(t)), \ \ \text{ a.e. } t \in \interval{t_0}{t_1};
		\] 
		\item The following maximum condition is satisfied
		\[
		H_k(q(t), \lambda(t), u(t)) = H_k^{\max}(q(t), \lambda(t)) , \ \ \text{ a.e. } t \in \interval{t_0}{t_1};
		\]
		\item The transversality condition holds for $ = t_0$
		\[
		H_k^{\max}(q(t_0), \lambda(t_0)) \mathrm{d}t - \lambda(t_0) \perp \mathrm{T}_{(t_0, q(t_0))}N.
		\]
			\end{enumerate}
		Furthermore, the Hamiltonian function is constant along the optimal trajectory $\lambda(t)$ and if $k = 0$, then for almost every $t \in \interval{t_0}{t_1}$, $\lambda(t)$ is not in the zero section of $\mathrm{T}^*M$.

\end{theorem}

\begin{definition}
	Let $M$ be a sub-Riemannian manifold and $p \in M$. The exponential map, defined on a maximal open set $ U_p \subseteq \mathrm{T}^*_pM$, is defined as
	\[
	\mathrm{exp}_p : U_p \to M : \lambda \mapsto \pi_M \circ \mathrm{e}^{\overrightarrow{H}}(\lambda)
	\]
\end{definition}

\begin{definition}
Let $\lambda : \interval{t_0}{t_1} \to \mathrm{T}^*M$ be a normal extremal from $p \in M$ with initial covector $\lambda_0 \in \mathrm{T}^*_pM$, and consider the family of normal extremals $\lambda(\alpha, \cdot) : \interval{t_0(\alpha)}{t_1(\alpha)} \to \mathrm{T}^*M$ given by Theorem ?. We say that $(\alpha^*, t^*)$ is a focal point if
\[
\det \frac{\partial q}{\partial \alpha}(\alpha^*, t^*) = 0,
\]
where $q(\alpha, \cdot) := \pi(\lambda(\alpha, \cdot))$. We denote by $\mathrm{Foc}_{\Sigma}(p, \lambda_0)$ the set of all focal points of this family. We also say that $q(t^*)$ is a focal point to $\Sigma$ through $q$.
\end{definition}

\begin{definition}
Let $\lambda : \interval{t_0}{t_1} \to \mathrm{T}^*M$ be a normal extremal. A variation of $\lambda$ is a map $\alpha : \interval{t_0}{t_1} \times \ointerval{-\epsilon}{\epsilon} \to \mathrm{T}^*M$ such that $t \mapsto \alpha(t, s) = \mathrm{exp}_{\sigma(s)}(t V(s))$ are normal extremals initially transversal to $\Sigma$ for every $s$, and such that $\lambda = \alpha(\cdot, 0)$.
\end{definition}

\begin{definition}
A Jacobi field along a normal extremal $\lambda : \interval{t_0}{t_1} \to \mathrm{T}^*M$ is given by the variation field of a variation of $\lambda$, that is to say, 
\[
J(t) := \frac{\partial}{\partial \epsilon} \alpha(t, 0),
\]
for some variation of $\lambda$.
\end{definition}

\begin{remark}
The space of Jacobi fields along the projection $q : \interval{t_0}{t_1} \to M$ is the vector space
\[
J_{\Sigma}(q) := \mathrm{span}\left\{ \frac{\partial q}{\partial \alpha_i}(0, \cdot) \mid i \in \left\{ 1, \dots, n \right\} \right\}.
\]
Thus, $(\alpha^*, t^*)$ is a focal point along $q$ if and only if the Jacobi field $J(t) = \sum_{i=1}^n \alpha^*_i \frac{\partial}{\partial \alpha_i} (0, t)$ is non-trivial and satisfies $J(0) = J(t^*) = 0$.
\end{remark}

\subsection{Jacobi curves and conjugate points, and non-minimality  }

Let $\gamma : \interval{0}{T} \to M$ be a normal geodesic and $\lambda(t)$ be its cotangent lift. As we have seen, we can write $\gamma(t) = \exp_p(t \lambda_0)$ for some initial covector $\lambda_0 \in \T_p^*(M)$. The critical points of the exponential map $\mathrm{exp}_p : \mathscr{A}_p \to M$  are called the conjugate covectors at $p$. We denote by $\mathrm{Conj}(p)$ the collection of all such covectors. Furthermore, we let $\mathrm{Conj}_{\mathcal{U}}(p) := \mathrm{Conj}(p) \cap \mathcal{U}$ if $\mathcal{U} \subseteq \mathrm{T}^*_p(M)$.

Consider a variation of $\gamma(t)$ through normal geodesics
\[
\Gamma(t, s) = \exp_{\sigma(s)}(t V(s))
\]
where $\Lambda(s) = (\sigma(s), V(s))$ is a curve in $\T^*(M)$ with $\Lambda(0) = (p, \lambda_0)$. The curve $\Lambda$ is well defined on a small interval $\ointerval{-\epsilon}{\epsilon}$. A \textit{sub-Riemannian Jacobi field} $J$ along the normal geodesic $\gamma$ can be seen as the variation field of a variation $\gamma$ through normal geodesics:
\[
J(t) = \dfrac{\partial}{\partial s} \exp_{\sigma(s)}(t V(s)) \Big|_{s = 0} = \dfrac{\partial}{\partial s} \exp(\sigma(s), t V(s)) \Big|_{s = 0}.
\]
Remembering that $\exp_p(t v) = \pi \circ \me^{t \overrightarrow{H}} (p, v)$, we have the equalities
\[
J(t) = \dfrac{\partial}{\partial s} \pi\left(\me^{t \overrightarrow{H}} (\sigma(s), V(s)) \right) \Big|_{s = 0} = \dfrac{\partial}{\partial s} \pi\left(\me^{t \overrightarrow{H}} (\Lambda(s)) \right) \Big|_{s = 0} = \diff_{\lambda(t)}\pi\left(\diff_{\lambda_0}\me^{t \overrightarrow{H}} \dot{\Lambda}(0) \right).
\]
The Jacobi field $J$ along $\gamma$ is therefore uniquely determined by its \textit{initial value} $\dot{\Lambda}(0) \in \mathrm{T}_{(p, \lambda_0)}(\mathrm{T}^*(M))$.
%$ \cong \T_p(M) \oplus \T_{\lambda(0)}^*(M)$.
This implies that the space of \textit{Jacobi fields along the geodesic} $\gamma$, which we denote by $\mathscr{J}(\gamma)$, is a vector space of dimension $2 n$.

On the other hand, the space of \textit{Jacobi fields along the extremal} $\lambda$, denoted this time by $\mathscr{J}(\lambda)$ is the collection of vector fields along $\lambda$ of the form
\[
\mathcal{J}(t) := \diff_{\lambda_0} \me^{t \overrightarrow{H}} \dot{\Lambda}(0),
\]
also uniquely determined by $\dot{\Lambda}(0) \in \mathrm{T}_{(p, \lambda_0)}(\mathrm{T}^*(M))$. The space $\mathscr{J}(\gamma)$ is linearly isomorphic to $\mathscr{J}(\lambda)$ through the pushforward of the bundle projection $\pi : \T^*(M) \to M$. Equivalently, a vector field $\mathcal{J}$ is a Jacobi field along the extremal $\lambda$ if it satisfies
\begin{equation}
	\label{lieeq}
	\dot{\mathcal{J}} := \mathcal{L}_{\overrightarrow{H}} \mathcal{J} = 0,
\end{equation}
where $\mathcal{L}_{\overrightarrow{H}} \mathcal{J}$ is the Lie derivative of a vector field along $\lambda$ in the direction of $\overrightarrow{H}$:
\[
\mathcal{L}_{\overrightarrow{H}} \mathcal{J} (t) = \lim_{\epsilon \to 0} \dfrac{(\diff_{\lambda(t + \epsilon)} \me^{-\epsilon \overrightarrow{H}})[\mathcal{J}(t + \epsilon)] - \mathcal{J}(t)}{\epsilon} = \frac{\diff}{\diff\epsilon}\Big|_{\epsilon=0} (\diff_{\lambda(t + \epsilon)} \me^{-\epsilon \overrightarrow{H}})[\mathcal{J} (t + \epsilon)].
\]

%The equation \eqref{lieeq} can be rewritten using the symplectic structure of $\T^*(M)$ and moving frame generalizing, in a \textit{non-canonical} way, the Riemannian parallel transport (see also \cite{subriemjacobi} for more details).

\begin{proposition}
	\label{jacobivan}
	Let $\gamma : \interval{0}{T} \to M$ be a normal geodesic with initial covector $\lambda_0 \in \T^*_p(M)$ and such that $\gamma(0) = p \in M$. For every $w \in \T^*_p(M)$, the unique Jacobi field along $\gamma$ with initial value $(0, w) \in \mathrm{T}_{(p, \lambda_0)}(\mathrm{T}^*(M))$ %$(0, w) \in \T_p(M) \oplus \T^*_p(M) \cong \mathrm{T}_{(p, \lambda_0)}(\mathrm{T}^*(M))$ 
	is given by
	\[
	J(t) = \diff_{t \lambda_0} \exp_p(t w)
	\]
	where we view $t w$ as an element of $\T_{t v}(\T_p^*(M)) \cong \T^*_p(M)$.
\end{proposition}

The usual properties for the Jacobi fields apply here. In particular, the normal extremal $\gamma|_{\interval{t}{t+\epsilon}}$ is abnormal if and only if $\gamma(s)$ is conjugate point to $\gamma(0)$ for all $s \in \interval{t}{t+\epsilon}$ (\cite[Theorem 8.47]{agrachev2020}). Furthermore, the set of conjugate times is discrete if $\gamma$ does not contain does not contain abnormal segments (\cite[Corollary 8.51]{agrachev2020}). We will also later use the following result from {\cite[Theorem 8.61.]{agrachev2020}}.

\begin{theorem}  
\label{conjptnotmin}
Let $\gamma : \interval{0}{T} \to M$ be a normal extremal that does not contain abnormal segments. If $\gamma$ has no conjugate points, then it is a local minimiser for the length on the space of admissible trajectories with the same endpoints. If $\gamma$ has at least a conjugate point to $\gamma(0)$, then it is not a local minimiser %with respect to the $W^{1, 2}$ topology 
on the space of admissible trajectories with the same endpoints.
\end{theorem}

%\begin{definition}
%$\mathrm{J}_{(p, \lambda_0)}(t)$
%\end{definition}

%\[
%J(t) = \frac{\partial}{\partial s} \mathrm{exp}_{\sigma(s)}(t V(s))
%\]

%Transversality condition: $(H(\alpha(t_0, s)), -\lambda(t_0, s)) \in \mathrm{T}_{(t_0, \alpha(t_0, s))}(\Sigma)^\perp$ is orthogonal to $\Sigma$ at $\alpha(t_0, s)$.

%\[
%\mathrm{T}_{(t_0(0), q(t_0(0)))}(\Sigma)^\perp := \left\{ \nu \in \mathrm{T}^*_{(t_0(0), q(t_0(0)))}(\mathds{R} \times M) \mid \forall w \in \mathrm{T}_{(t_0(0), q(t_0(0)))}(\Sigma), \nu(w) = 0 \right\}
%\]

\begin{proposition}
	If the sub-Riemannian manifold $M$ is ideal, the regular conjugate locus $\mathrm{Conj}^R(p)$ is an open dense subset of $\mathrm{Conj}(p)$.
\end{proposition}

\begin{proof}
	Same as Warner. It is a consequence of (R3).
\end{proof}

%In fact, by Carathéodory's theorem for ordinary differential equations, we know that there exists a unique maximal Lipschitz solution to the Cauchy problem
%\begin{equation}
%\label{cauchyprob}
%\begin{cases}
%	\dot{\gamma}(t) = \sum_{k = 0}^m u_k(t) X_k(\gamma(t)) \\
%	\gamma(t_0) = p
%\end{cases}
%\end{equation}
%for every $u \in \L^{2}(\interval{0}{T}, \R^m)$, $p \in M$ and $t_0 \in \interval{0}{T}$. We denote such a solution by $\gamma_{t_0, p, u}$ and we can now introduce the end-point map.

---------------------------

